\newcommand{\noun}[1]{\textsc{#1}}
\providecommand{\tabularnewline}{\\}
\numberwithin{equation}{section}
\numberwithin{figure}{section}
\theoremstyle{plain}
\newtheorem*{thm*}{\protect\theoremname}
\theoremstyle{plain}
\newtheorem{thm}{\protect\theoremname}[section]
\theoremstyle{plain}
\newtheorem{cor}[thm]{\protect\corollaryname}
\theoremstyle{remark}
\newtheorem*{acknowledgement*}{\protect\acknowledgementname}
\theoremstyle{definition}
\newtheorem{defn}[thm]{\protect\definitionname}
\theoremstyle{plain}
\newtheorem{prop}[thm]{\protect\propositionname}
\theoremstyle{remark}
\newtheorem{claim}[thm]{\protect\claimname}
\DeclareMathAlphabet{\mathbbold}{U}{bbold}{m}{n}
\setlist[enumerate]{leftmargin=*,widest=0}
\DeclareMathOperator{\col}{col}
\DeclareMathOperator{\supp}{supp}
\DeclareMathOperator{\diag}{diag}
\DeclareMathOperator{\BT}{\mathcal{B}}
\DeclareMathOperator{\X}{\mathcal{X}}
\DeclareMathOperator{\dist}{\textrm{dist}}
\newcommand{\one}{\mathbbold{1}}
\newcommand{\zero}{\mathbbold{0}}
\newcommand{\Mod}[1]{\,\left(\textup{mod}\;#1\right)}
\providecommand{\acknowledgementname}{Acknowledgement}
\providecommand{\claimname}{Claim}
\providecommand{\corollaryname}{Corollary}
\providecommand{\definitionname}{Definition}
\providecommand{\propositionname}{Proposition}
\providecommand{\theoremname}{Theorem}
\begin{document}
\global\long\def\eps{\varepsilon}%

\title[Cutoff on Ramanujan complexes and classical groups]{Cutoff on Ramanujan complexes\\
and classical groups}
\author{Michael Chapman}
\author{Ori Parzanchevski}
\begin{abstract}
The total-variation cutoff phenomenon has been conjectured to hold
for simple random walk on all transitive expanders. However, very
little is actually known regarding this conjecture, and cutoff on
sparse graphs in general. In this paper we establish total-variation
cutoff for simple random walk on Ramanujan complexes of type $\smash{\widetilde{A}_{d}}$
($d\geq1$). As a result, we obtain explicit generators for the finite
classical groups $\mathrm{PGL}_{n}(\mathbb{F}_{q})$ for which the
associated Cayley graphs exhibit total-variation cutoff.
\end{abstract}

\maketitle
\thispagestyle{empty}

\section{Introduction}

The $\varepsilon$-mixing time of a finite Markov chain is the earliest
time at which the distribution on states becomes $\varepsilon$-close
to the stationery one, regardless of the starting distribution. There
are several distance functions which one may use, and we focus on
$L^{1}$, or total-variation (see \eqref{eq:TV}). The parameter $\varepsilon$
can be thought of as a measure of ``standards'': for example, in
a professional poker tournament one expects the dealer to shuffle
the decks longer than in an amateur one. Loosely speaking, a sequence
of Markov chains is said to exhibit the \emph{cutoff phenomenon} if
it is insensitive to ones' standards. Namely, whether one seeks to
be at most $0.01$ away from the stationary distribution or at most
$0.99$ away from it, it will roughly take the same amount of time.
In other words, for a long period of time the distribution is at almost
maximal distance from stationery, and then over a short period of
time it mixes almost completely. This counter-intuitive phenomenon
was first demonstrated by Diaconis--Shahshahani and Aldous \cite{diaconis1981generating,aldous1983random},
and was subsequently shown to hold in many naturally occurring Markov
chains (see the surveys \cite{Diaconis1996cutoffphenomenonfinite,SaloffCoste2004RandomWalksFinite}).
Common to all of these examples is that the number of legal moves
grows together with the number of states.

The case of a bounded number of legal moves -- for example, simple
random walk (SRW) on a family of graphs with bounded degrees -- turned
out to be more resistant, and much less is known about it. In 2004,
Peres has conjectured that SRW on every family of transitive bounded
degree expanders exhibits the cutoff phenomenon \cite{AmirDembo2004SharpThresholdsMixing},
even though at the time \emph{no }family of bounded degree expanders
was known to do so. In \cite{Lubetzky2010Cutoffphenomenarandom},
Lubetzky and Sly used probabilistic methods to show that random regular
graphs exhibit cutoff a.a.s.; The next big breakthrough was achieved
by Lubetzky and Peres \cite{lubetzky2016cutoff}, who showed that
SRW on all Ramanujan graphs (which are optimal expanders) exhibit
cutoff. A main ingredient of \cite{lubetzky2016cutoff} is to show
first that non-backtracking random walk (NBRW) on Ramanujan graphs
exhibits cutoff at an optimal time. The last assertion was generalized
in \cite{Lubetzky2017RandomWalks} to the context of Ramanujan complexes,
which are high-dimensional analogues of Ramanujan graphs, defined
in \cite{li2004ramanujan,Lubotzky2005a}. In the paper \cite{Lubetzky2017RandomWalks},
Lubetzky, Lubotzky and the second author establish optimal-time cutoff
for a large family of \emph{asymmetric} random walks on the cells
of these complexes (in the graph case, NBRW is an asymmetric walk
on edges). However, the techniques of \cite{Lubetzky2017RandomWalks}
cannot be applied neither to any symmetric\emph{ }random walk, nor
to any walk on vertices.

The goal of the current paper is to establish cutoff for SRW on Ramanujan
complexes arising from the group $\mathrm{PGL}_{d}$ over a non-archimedean
local field. Interestingly, while SRW on vertices only ``sees''
the $1$-skeleton of the complex, our proof makes use of asymmetric
random walks on cells of \emph{all }dimensions of the complex, showing
that the high-dimensional geometry can play an important role even
when studying walks on graphs. A main motivation to study these complexes
is the study of expansion in simple groups: the finite groups $\mathrm{PGL}_{2}(\mathbb{F}_{q})$
admit a Cayley structure of a Ramanujan graph due to Lubotzky, Phillips
and Sarnak \cite{LPS88}, whereas the groups $\mathrm{PGL}_{d}(\mathbb{F}_{q})$
for general $d$ can be endowed with a Cayley structure which is the
$1$-skeleton of a Ramanujan complex. Thus, we establish here cutoff
for SRW on the groups $\mathrm{PGL}_{d}(\mathbb{F}_{q})$, with respect
to the appropriate generators. We remark that the situation in the
case $d\geq3$ is even more striking than in the graph case $(d=2)$:
by Kazhdan's property $(T)$, for $d\geq3$ every generating set of
$\mathrm{PGL}_{d}\left(\mathbb{Z}\right)$ gives rise to an expander
family of Cayley graphs of $\mathrm{PGL}_{d}(\mathbb{F}_{q})$ \cite{Margulis1973ExplicitKazhdan},
but except for the case which we handle in this paper, it is unknown
whether these families exhibit cutoff or not. \medskip{}

We now move on to rigorous terms. Let $\mathcal{D}$ be a connected
directed graph (\emph{digraph}), which we assume for simplicity to
be $k$-out and $k$-in regular. Consider random walk on $\mathcal{D}$
starting at a vertex $v_{0}$ with uniform transition probabilities,
and denote by $\mu_{\mathcal{D},v_{0}}^{t}$ its distribution at time
$t$. The $\varepsilon$-mixing time of $\mathcal{D}$ is 
\[
t_{mix}\left(\varepsilon\right)=t_{mix}\left(\varepsilon,\mathcal{D}\right)=\min\left\{ t\in\mathbb{N}\,\middle|\,\forall v_{0}\in\mathcal{D}^{0},\ \Vert\mu_{\mathcal{D},v_{0}}^{t}-\pi_{\mathcal{D}}\Vert_{TV}<\varepsilon\right\} ,
\]
where $\pi_{\mathcal{D}}$ is the uniform distribution on $\mathcal{D}^{0}$
(the vertices of $\mathcal{D}$), and $\left\Vert \cdot\right\Vert _{TV}$
is the total-variation norm:
\begin{equation}
\left\Vert \mu-\nu\right\Vert _{TV}=\max_{A\subseteq\mathcal{D}^{0}}\left|\mu\left(A\right)-\nu\left(A\right)\right|=\tfrac{1}{2}\left\Vert \mu-\nu\right\Vert _{1}.\label{eq:TV}
\end{equation}
A family of digraphs $\left\{ \mathcal{D}_{n}\right\} $ is said to
exhibit cutoff if $\frac{t_{mix}\left(\varepsilon,\mathcal{D}_{n}\right)}{t_{mix}\left(1-\varepsilon,\mathcal{D}_{n}\right)}\overset{{\scriptscriptstyle n\rightarrow\infty}}{\longrightarrow}1$
for every $0<\varepsilon<1$. The cutoff is said to occur at time
$t\left(n\right)$, if for every $\varepsilon>0$ there exists a window
of size $w\left(n,\varepsilon\right)=o\left(t\left(n\right)\right)$,
such that $\left|t_{mix}\left(\varepsilon,\mathcal{D}_{n}\right)-t(n)\right|\leq w(n,\varepsilon)$
for $n$ large enough. If $t\left(n\right)=\log_{k}|\mathcal{D}_{n}|$
we say that the cutoff is \emph{optimal}, since a $k$-regular walk
cannot mix in less steps.

Recall that a connected $k$-regular graph is called a \emph{Ramanujan
graph} if its adjacency spectrum is contained in $\left[-2\sqrt{k-1},2\sqrt{k-1}\right]\cup\left\{ k\right\} $.
\begin{thm*}[\cite{lubetzky2016cutoff}]
The family $\left\{ G_{n}\right\} $ of all $k$-regular Ramanujan
graphs exhibits (1) cutoff for SRW at time $\frac{k}{k-2}\log_{k-1}\left|G_{n}\right|$,
with a window of size $O(\sqrt{\log\left|G_{n}\right|})$; (2) optimal
cutoff for NBRW (at time $\log_{k-1}\left|G_{n}\right|$), with a
window of size $O\left(\log\log\left|G_{n}\right|\right)$.
\end{thm*}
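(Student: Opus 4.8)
The plan is to reduce the SRW statement to the NBRW statement via a single polynomial identity, and to prove the NBRW statement by a spectral computation whose only non-elementary input is the Ramanujan bound. Fix $k=q+1$, write $n=|G_{n}|$, and let $A$ be the adjacency operator of $G_{n}$ with orthonormal eigenbasis $f_{1}=\one/\sqrt{n},f_{2},\dots,f_{n}$ and eigenvalues $k=\lambda_{1}>\lambda_{2}\ge\dots\ge\lambda_{n}$; the Ramanujan hypothesis says $\lambda_{i}\in[-2\sqrt{q},2\sqrt{q}]$ for $i\ge2$. Let $B_{\ell}$ count non-backtracking walks of length $\ell$ between vertices; then $B_{\ell}=p_{\ell}(A)$ for $p_{0}=1$, $p_{1}=X$, $p_{2}=X^{2}-k$, $p_{\ell}=Xp_{\ell-1}-qp_{\ell-2}$ $(\ell\ge3)$, and $p_{\ell}(k)=kq^{\ell-1}$ for $\ell\ge1$; normalizing, I write $\widehat{P}_{\ell}$ for the $\ell$-step vertex-to-vertex NBRW operator ($\widehat{P}_{0}=\mathrm{Id}$), which has uniform stationary distribution $\pi$.

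\emph{NBRW cutoff.} Writing $X=2\sqrt{q}\cos\theta$ and solving the recursion, $p_{\ell}(2\sqrt{q}\cos\theta)=2q^{\ell/2}\,\mathrm{Re}\!\left(\alpha(\theta)e^{i\ell\theta}\right)$ with $\alpha$ bounded away from $\theta\in\{0,\pi\}$ and having at worst a simple pole there, so that
\[
|p_{\ell}(\lambda)|\le C(\ell+1)q^{\ell/2}\qquad\text{for all }\lambda\in[-2\sqrt{q},2\sqrt{q}],
\]
with $C$ absolute. \emph{This is the only place the Ramanujan bound is used, and it is indispensable}: an eigenvalue slightly outside $[-2\sqrt{q},2\sqrt{q}]$ would make the right-hand side exponentially larger and rule out cutoff. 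Expanding $\delta_{v_{0}}$ in the $f_{i}$ and using $\sum_{i\ge2}|\langle\delta_{v_{0}},f_{i}\rangle|^{2}\le1$,
\[
\Vert\widehat{P}_{\ell}\delta_{v_{0}}-\pi\Vert_{TV}\le\tfrac12\sqrt{n}\,\Vert\widehat{P}_{\ell}\delta_{v_{0}}-\pi\Vert_{2}\le C'(\ell+1)\sqrt{n}\,q^{-\ell/2},
\]
which is $o(1)$ as soon as $\ell\ge\log_{q}n+C''\log\log n$; conversely $\widehat{P}_{\ell}\delta_{v_{0}}$ is supported on the $\le kq^{\ell-1}$ endpoints of length-$\ell$ non-backtracking walks from $v_{0}$, so its distance from $\pi$ is $\ge1-kq^{\ell-1}/n\to1$ when $\ell\le\log_{q}n-\omega(1)$. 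This is optimal cutoff at $\log_{q}n=\log_{k-1}n$ with window $O(\log\log n)$; the directed-edge NBRW of part (2) is handled identically (its state space, of size $kn$, only shifts the time by $O(1)$), and a bipartite $G_{n}$ requires only parity bookkeeping.

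\emph{SRW as a mixture of NBRW.} On the $k$-regular tree $\mathcal{T}$ there is an exact identity $P^{t}=\sum_{\ell\ge0}r_{t,\ell}\,\widehat{P}_{\ell}^{\,\mathcal{T}}$, where $P=A_{\mathcal{T}}/k$ is the SRW operator and $r_{t,\ell}$ is the probability that SRW on $\mathcal{T}$ is at distance $\ell$ from its start at time $t$: both sides are radial operators, and SRW conditioned on its distance at time $t$ is uniform on the corresponding sphere, which is exactly $\widehat{P}_{\ell}^{\,\mathcal{T}}$. Since $P^{t}=(X/k)^{t}$ and $\widehat{P}_{\ell}^{\,\mathcal{T}}$ are polynomials in $A_{\mathcal{T}}$ and $\mathrm{spec}(A_{\mathcal{T}})=[-2\sqrt{q},2\sqrt{q}]$ is infinite, this is a polynomial identity in one variable; it therefore remains valid with $A_{\mathcal{T}}$ replaced by $A$, giving $P^{t}=\sum_{\ell\ge0}r_{t,\ell}\,\widehat{P}_{\ell}$ on \emph{every} $k$-regular graph. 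The radial part of SRW on $\mathcal{T}$ is the walk on $\mathbb{Z}_{\ge0}$ with up-probability $q/k$, down-probability $1/k$, reflected at $0$; it has drift $\tfrac{q-1}{q+1}=\tfrac{k-2}{k}$ and diffusive fluctuations, so $r_{t,\cdot}$ concentrates on $\tfrac{k-2}{k}t\pm O(\sqrt{t})$ with sub-Gaussian one-sided tails (compare with the unreflected walk and apply Hoeffding).

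\emph{Putting it together.} For the upper bound, $\Vert P^{t}\delta_{v_{0}}-\pi\Vert_{TV}\le\sum_{\ell}r_{t,\ell}\Vert\widehat{P}_{\ell}\delta_{v_{0}}-\pi\Vert_{TV}$; when $t=\tfrac{k}{k-2}\log_{k-1}n+C\sqrt{\log n}$ with $C$ large, $r_{t,\cdot}$ puts all but $o(1)$ of its mass on $\ell\ge\log_{k-1}n+C''\log\log n$, where each term is $o(1)$. For the lower bound, lift SRW on $G_{n}$ to SRW on $\mathcal{T}$ and project a tree-geodesic to get $\dist_{G_{n}}(v_{0},X_{t})\le D_{t}$, the radial process; for $t=\tfrac{k}{k-2}\log_{k-1}n-C\sqrt{\log n}$ this is $\le\log_{k-1}n-c\sqrt{\log n}$ with probability $1-o(1)$, while a ball of radius $r$ in any $k$-regular graph has at most $\tfrac{k}{k-2}(k-1)^{r}$ vertices and hence $\pi$-mass $o(1)$, so $\Vert P^{t}\delta_{v_{0}}-\pi\Vert_{TV}\ge1-o(1)$. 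This yields SRW cutoff at $\tfrac{k}{k-2}\log_{k-1}n$ with window $O(\sqrt{\log n})$ — the diffusive scale of $D_{t}$, which absorbs the $O(\log\log n)$ NBRW window. I expect the main obstacle to be the uniform bound $|p_{\ell}(\lambda)|\le C(\ell+1)q^{\ell/2}$ on $[-2\sqrt{q},2\sqrt{q}]$ with an absolute constant — the simple pole of $\alpha(\theta)$ at the endpoints is exactly what produces the linear factor $\ell+1$, hence the $\log\log$ window, and the exponential failure off the interval is what makes the Ramanujan property essential — the remaining pieces being the radial concentration and the bipartite bookkeeping.
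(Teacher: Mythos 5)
This theorem is not proved in the paper at all --- it is quoted from \cite{lubetzky2016cutoff}, and the paper only summarizes the strategy in one sentence (reduce SRW cutoff to optimal NBRW cutoff by studying the distance of SRW on the tree from its starting point; prove NBRW cutoff by spectral analysis of non-normal operators). Your sketch is correct in outline and is essentially that strategy. The decomposition $P^{t}=\sum_{\ell}r_{t,\ell}\widehat{P}_{\ell}$, with $r_{t,\ell}$ the tree-distance distribution of SRW, is exactly the Lubetzky--Peres reduction (your ``polynomial identity'' justification via the infinite spectrum of $A_{\mathcal{T}}$ is a clean way to phrase it); your key estimate checks out, since $p_{\ell}(2\sqrt{q}\cos\theta)=q^{\ell/2}\bigl(\sin((\ell+1)\theta)-q^{-1}\sin((\ell-1)\theta)\bigr)/\sin\theta$, giving $|p_{\ell}(\lambda)|\le 2(\ell+1)q^{\ell/2}$ on the Ramanujan interval with an absolute constant; and the drifted reflected radial walk, the sub-Gaussian concentration at scale $\sqrt{t}$, and the ball-volume lower bound are the same ingredients as in \cite{lubetzky2016cutoff}. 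The diffusive window $O(\sqrt{\log n})$ for SRW and the $O(\log\log n)$ window for the endpoint distribution come out correctly (one should also keep the parity constraint $\ell\equiv t\ (\mathrm{mod}\ 2)$ and the bipartite eigenvalue $-k$ in the bookkeeping, as you note).

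The one place you are too quick is part (2) as a statement about the NBRW \emph{chain} rather than its vertex marginal. Your argument controls $B_{\ell}=p_{\ell}(A)$, a polynomial in the self-adjoint operator $A$, which gives optimal cutoff for the \emph{position} of the NBRW --- and that is all that your proof of part (1) uses. But the NBRW is a Markov chain on the $kn$ directed edges, and its time-$\ell$ law is not $p_{\ell}(A)$ applied to any vertex function: the probability of terminating at the edge $(u,w)$ counts non-backtracking paths to $u$ that do not arrive through $w$, so the state space of size $kn$ does not merely ``shift the time by $O(1)$.'' Controlling that chain is precisely where \cite{lubetzky2016cutoff} introduces the spectral analysis of the non-normal Hashimoto operator (the same point that, in the generality used in this paper, is supplied by Theorems \ref{thm:Col-Free-Ram} and \ref{thm:digraph-bound} on $r$-normal Ramanujan digraphs). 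So: if (2) is read as cutoff for the walker's position, your sketch is complete modulo routine details; if it is read as cutoff for the edge chain, you need the non-normal (or $2$-normal block) bound, and ``handled identically'' is a genuine gap.
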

In \cite{lubetzky2016cutoff} SRW-cutoff is first reduced to optimal
NBRW-cutoff, by studying the distance of SRW on the tree from its
starting point. To obtain optimal cutoff for NBRW new spectral techniques
are developed for analyzing non-normal operators.

To see how the notion of Ramanujan graphs generalizes to higher dimension,
recall that $\left[-2\sqrt{k-1},2\sqrt{k-1}\right]$ is the $L^{2}$-adjacency
spectrum of the $k$-regular tree, which is the universal cover of
every $k$-regular graph \cite{Kesten1959}. In accordance, \emph{Ramanujan
complexes }are roughly defined as finite complexes which spectrally
mimic their universal cover; for a precise definition see §\ref{sec:Preliminaries-and-notations}.
In \cite{Lubetzky2017RandomWalks}, a vast generalization of part
(2) of the theorem above is proved: say that a walk rule is \emph{collision-free}
if two walkers which depart from each other can never cross paths
again.
\begin{thm*}[\cite{Lubetzky2017RandomWalks}]
Let $\mathcal{B}$ be an affine Bruhat-Tits building (see §\ref{subsec:Bruhat-Tits-buildings}),
and fix a collision-free walk rule on cells of $\mathcal{B}$. The
family of Ramanujan complexes with universal cover $\mathcal{B}$
exhibit optimal cutoff w.r.t.\ the corresponding walk rule on them.
\end{thm*}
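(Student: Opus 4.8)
The plan is to show that the walk on $X$ has mixing time $\log_{k}N+O(\log\log N)$ and no smaller, where $\mathcal{D}_{X}$ is the $k$-regular digraph induced on the relevant cells of $X=\Gamma\backslash\mathcal{B}$ by the collision-free walk rule and $N:=|\mathcal{D}_{X}^{0}|$; since the window $O(\log\log N)$ is $o(\log N)$ and $\log_{k}N=\log_{k}|\mathcal{D}_{X}^{0}|$ exactly, this is optimal cutoff. Write $A$ for the walk operator of $\mathcal{D}_{X}$ on $\ell^{2}(\mathcal{D}_{X}^{0})$ and $\pi$ for the uniform distribution. One first records the easy structural facts: $\mathcal{D}_{X}$ is connected and also $k$-\emph{in}-regular (both inherited from the homogeneity of the building and the walk rule), so $A\one=A^{*}\one=k\one$, the stationary distribution is $\pi$, and $\one^{\perp}$ is $A$-invariant. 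The lower bound is pure counting: $\mu_{c}^{t}$ is supported on at most $k^{t}$ cells, so $\|\mu_{c}^{t}-\pi\|_{TV}\ge 1-k^{t}/N$, whence $t_{mix}(1-\eps,\mathcal{D}_{X})\ge\log_{k}N-O_{\eps}(1)$. For the upper bound, the Cauchy--Schwarz inequality $\|\mu-\pi\|_{TV}\le\tfrac12\sqrt{N}\,\|\mu-\pi\|_{2}$ together with the identity (up to transpose, which is immaterial) $\mu_{c}^{t}-\pi=(A/k)^{t}(\delta_{c}-\pi)$ and $\delta_{c}-\pi\in\one^{\perp}$ reduces everything to the operator-norm estimate
\[
\bigl\|(A/k)^{t}\big|_{\one^{\perp}}\bigr\|_{op}\ \le\ \mathrm{poly}(t)\cdot k^{-t/2}\,.
\]

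The obstacle is that $A$ is genuinely non-normal, so the Ramanujan bound $|\lambda|\le\sqrt{k}$ on the non-trivial spectrum of $A$ does not, by itself, bound $\|(A/k)^{t}|_{\one^{\perp}}\|_{op}$. The way around it is to transfer the problem to harmonic analysis on the single fixed building $\mathcal{B}$, using $X=\Gamma\backslash\mathcal{B}$. I would decompose $\ell^{2}(\mathcal{D}_{X}^{0})$ over the automorphic representations $\rho$ occurring in $L^{2}$ of the lattice $\Gamma$ in $\mathrm{Aut}(\mathcal{B})$; since the walk rule is $\mathrm{Aut}(\mathcal{B})$-equivariant, $A$ preserves each isotypic piece and acts there as one and the same element $a$ of the relevant parahoric Hecke algebra of $\mathcal{B}$. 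Because $X$ is Ramanujan, every non-trivial $\rho$ is $\mathcal{B}$-tempered, i.e.\ weakly contained in the regular representation of $\mathrm{Aut}(\mathcal{B})$, and hence the operator norm of each $a^{t}$ on $\rho$ is at most its norm on $\ell^{2}(\mathcal{B})$. So it suffices to prove, once and for all on the fixed building $\mathcal{B}$, the bound $\|a^{t}\|_{\ell^{2}(\mathcal{B})}\le\mathrm{poly}(t)\,k^{t/2}$: the $\ell^{2}(\mathcal{B})$-spectral radius of $a$ is indeed $\sqrt{k}$, but since $a$ is non-self-adjoint even on $\mathcal{B}$, one must show that its powers grow no faster than a polynomial-in-$t$ multiple of $k^{t/2}$. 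This is exactly where the collision-free hypothesis does its work: it makes $a$ a geodesic-flow type operator whose powers count geodesic trajectories in $\mathcal{B}$, so that $\|a^{t}\|_{\ell^{2}(\mathcal{B})}$ is accessible through the explicit spherical/Macdonald theory of the building --- the higher-rank avatar of the $2\times2$ Ihara--Bass computation that governs $\|B^{t}\|$ for non-backtracking walk on a tree.

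Feeding this back in: for $t\ge\log_{k}N+O(\log\log N)$ one gets $\bigl\|(A/k)^{t}\big|_{\one^{\perp}}\bigr\|_{op}\le\mathrm{poly}(t)\,k^{-t/2}$, hence $\|\mu_{c}^{t}-\pi\|_{TV}\le\tfrac12\sqrt{N}\,\mathrm{poly}(t)\,k^{-t/2}<\eps$; combined with the lower bound, the walk exhibits cutoff at $\log_{k}N=\log_{k}|\mathcal{D}_{X}^{0}|$ with window $O(\log\log N)$, i.e.\ optimal cutoff. I expect the harmonic-analytic step --- bounding the powers of the non-normal walk operator on a general affine Bruhat--Tits building, where ``collision-free'' is precisely the property turning this into a tractable geodesic-counting computation --- to be the main obstacle, together with the bookkeeping needed to invoke the Ramanujan/temperedness property at the parahoric level on which the walk actually lives (when that level is higher than the vertices, this must be deduced from the vertex-level bound). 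The remaining ingredients --- the lower bound, the $L^{2}$-contraction, and the $k$-in-regularity and connectivity of $\mathcal{D}_{X}$ --- are routine by comparison.
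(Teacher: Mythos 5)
Your overall skeleton (lower bound by counting the at most $k^{t}$ reachable cells; upper bound via $\|\mu-\pi\|_{TV}\le\tfrac{\sqrt{N}}{2}\|\mu-\pi\|_{2}$; everything reduced to $\|(A/k)^{t}|_{L_{0}^{2}}\|\le\mathrm{poly}(t)k^{-t/2}$) is the right frame, but the proposal has a genuine gap at exactly the crux. The step you defer to ``spherical/Macdonald theory'' --- a bound $\|a^{t}\|_{\ell^{2}(\mathcal{B})}\le\mathrm{poly}(t)\,k^{t/2}$ for a general collision-free walk operator, transferred to the quotient by weak containment --- is not a routine computation; it is precisely the content of the non-normal spectral machinery that the cited works supply and that this paper imports as Theorems \ref{thm:Col-Free-Ram} and \ref{thm:digraph-bound}. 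The actual mechanism is not a geodesic-counting estimate on $\mathcal{B}$: counting alone (Schur test with in- and out-degree $k^{t}$) only gives $\|A^{t}\|\le k^{t}$, which is useless here. What makes the argument work is that the walk operator, viewed in each automorphic (Iwahori-spherical) constituent, acts on a space of parahoric-invariant vectors of dimension at most $r=d!/(d-j)!$; collision-freeness forces the resulting digraph to be $r$-normal, i.e.\ unitarily block-diagonalizable with blocks of size $\le r$, and then the elementary bound $\|A^{\ell}|_{L_{0}^{2}}\|_{2}\le\binom{\ell+r-1}{r-1}k^{r-1}\lambda^{\ell-r+1}$ of Theorem \ref{thm:digraph-bound}, with $\lambda\le\sqrt{k}$ from the Ramanujan/temperedness hypothesis, delivers \eqref{eq:Ram-dig-power}. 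Your plan names the obstacle but does not contain the idea (finite bounded-size blocks) that overcomes it, so as written the upper bound is unproven; moreover the temperedness transfer must be justified at the parahoric level on which the walk lives, which again goes through this block structure rather than around it.

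A secondary but real issue is your treatment of the trivial spectrum. On a $d$-partite complex the type-one cells carry a color grading which the collision-free flow rotates, so the adjacency operator of $\mathcal{D}_{X}$ has several eigenvalues of modulus $k$ (roots of unity times $k$), not just the Perron eigenvalue; in particular $\delta_{c}-\pi$ need not lie in the orthogonal complement of all of them, the chain is typically periodic, and the distribution at time $t$ is supported on a single color class. One must therefore split $L^{2}=L_{\col}^{2}\oplus L_{0}^{2}$ and either state mixing relative to the color walk or verify aperiodicity, exactly as this paper does with $\mathcal{P}_{\col}$ and $\mathcal{P}_{0}$; your reduction to $\one^{\perp}$ with uniform stationary distribution is not correct in general.
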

This recovers NBRW on Ramanujan graphs, since NBRW is indeed collision-free
on the edges of the tree. In higher dimension, NBRW is not collision-free
anymore, but in \cite[§5.1]{Lubetzky2017RandomWalks} it is shown
that collision-free walks do exist,\emph{ }on cells of every dimension,
except for vertices. As SRW is not collision-free, the techniques
of \cite{Lubetzky2017RandomWalks} cannot address it (in fact, they
cannot address any operator on vertices - see \cite[Rem.\ 3.5(b)]{Parzanchevski2018RamanujanGraphsDigraphs}).

\medskip{}

The goal of this paper is to establish cutoff for SRW on Ramanujan
complexes. We fix a non-archimedean local field $F$ with residue
field of size $q$, and denote by $\mathcal{B}=\mathcal{B}_{d,F}$
the Bruhat-Tits building associated with $\mathrm{PGL}_{d}\left(F\right)$.
\begin{thm}[Main theorem]
\label{thm:main}The family $\left\{ X_{n}\right\} $ of all Ramanujan
complexes with universal cover $\mathcal{B}$ exhibits total-variation
cutoff for SRW at time $C_{d,q}\log_{q}\left|X_{n}\right|$, with
a window of size $O(\sqrt{\log\left|X_{n}\right|})$. The constant
$C_{d,q}$ is determined in \eqref{eq:Cdq}, and for each $d$ is
a rational function in $q$ of magnitude $\tfrac{1}{\left\lfloor d/2\right\rfloor \left\lceil d/2\right\rceil }+O(\tfrac{1}{q})$
(see Table \ref{tab:deg_xi}).
\end{thm}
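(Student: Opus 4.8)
The plan is to follow, in higher dimension, the strategy of Lubetzky and Peres for Ramanujan graphs: pass to the universal cover, where the walk becomes completely transparent, and split the mixing into a \emph{radial} concentration statement for the lifted walk together with an \emph{equidistribution} input on $X_n$ itself. In the graph case the latter is exactly the optimal non‑backtracking cutoff of \cite{lubetzky2016cutoff}; the novelty here is that, since there is no collision‑free walk on vertices, the required equidistribution must be harvested from the collision‑free cutoff of \cite{Lubetzky2017RandomWalks} applied to walks on cells of \emph{all} dimensions of $\mathcal{B}$.

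\emph{Lifting and radial decomposition.} Let $p\colon\mathcal{B}\to X_n$ be the covering map, $v_0\in\mathcal{B}^0$ a lift of the basepoint, and $\tilde\mu^t$ the law at time $t$ of SRW on $\mathcal{B}$ from $v_0$; then $\mu_{X_n,p(v_0)}^t=p_{*}\tilde\mu^t$, so it suffices to understand $\tilde\mu^t$ and how $p$ wraps it around. The SRW operator commutes with $K:=\mathrm{Stab}(v_0)$, hence $\tilde\mu^t$ is constant on $K$‑orbits of $\mathcal{B}^0$; these are indexed by dominant coweights $\lambda\in\Lambda^{+}$ via the building invariant $\mathrm{inv}(v_0,\cdot)$, and the corresponding sphere $S_\lambda$ has $|S_\lambda|=q^{\langle\lambda,2\rho\rangle+O(1)}$ as $\lambda\to\infty$ deep inside the Weyl chamber. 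Thus $\tilde\mu^t$ is encoded by the \emph{radial chain} $\Lambda_t:=\mathrm{inv}(v_0,W_t)$ together with the conditional distribution of the walker $W_t$ on $S_{\Lambda_t}$, and I would analyse these two pieces separately.

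\emph{Drift of the radial chain and the constant.} The increments of $\Lambda_t$ are governed by building combinatorics: from a vertex far inside a chamber direction, a proportion $\binom{d}{i}_q\big/\sum_{j}\binom{d}{j}_q$ of the neighbours have type $i$, a step to such a neighbour shifts the coweight by a Weyl translate of the fundamental coweight $\varpi_i^{\vee}$, and the forward/sideways/backward probabilities are ratios of adjacent sphere‑cardinalities. Consequently the increment process is driven by a finite Markov chain with transition probabilities rational in $q$, giving a law of large numbers $\Lambda_t/t\to v=v_{d,q}$ and a Gaussian central limit theorem $\Lambda_t=tv+O_{\mathbb{P}}(\sqrt t)$, the drift $v$ being an explicit vector rational in $q$ and lying in the \emph{interior} of the Weyl chamber (all $\binom{d}{i}_q>0$, so every direction is visited and $v$ is bounded away from the walls). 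Since $|S_{tv}|=q^{t\langle v,2\rho\rangle+O(1)}$, matching $|S_{\Lambda_t}|$ with $|X_n|$ will give cutoff at $C_{d,q}\log_q|X_n|$ with $C_{d,q}=1/\langle v,2\rho\rangle$, the value of \eqref{eq:Cdq}. As $q\to\infty$ the increment concentrates on the single most numerous type $m=\lfloor d/2\rfloor$ and backtracking becomes negligible, so $v\to\varpi_m^{\vee}$; from $\langle\varpi_m^{\vee},2\rho\rangle=\lfloor d/2\rfloor\lceil d/2\rceil$ one then reads off the asserted magnitude $\tfrac{1}{\lfloor d/2\rfloor\lceil d/2\rceil}+O(\tfrac1q)$.

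\emph{Conditional equidistribution — the crux.} The remaining and hardest step is to show that, conditioned on $\Lambda_t=\lambda$ with $|\lambda-tv|\le A\sqrt t$, the image $p(W_t)$ is close to uniform on $X_n^0$ precisely when $|S_\lambda|$ surpasses $|X_n|$. A pair $(v_0,w)$ with $\mathrm{inv}(v_0,w)=\lambda$ deep interior determines a canonical sequence of forward moves through cells of $\mathcal{B}$, so recording an SRW path and deleting its (rare, front‑localized) sideways and backward steps converts it — up to a coupling error negligible on the scale of the window — into a trajectory of one of the collision‑free walks of \cite[\S5.1]{Lubetzky2017RandomWalks} on cells of $\mathcal{B}$, run for $s$ steps with $s\asymp t$ up to an $O(\sqrt t)$ fluctuation. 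Pushing forward by $p$ and invoking the optimal collision‑free cutoff of \cite{Lubetzky2017RandomWalks} on the Ramanujan complex $X_n$ (whose window is only $O(\log\log|X_n|)$), this trajectory equidistributes exactly once $s$ passes the collision‑free mixing time, which occurs iff $|S_{\Lambda_t}|$ exceeds $|X_n|$. Combined with the drift estimate this yields both directions: for $t\ge(1+\eps)C_{d,q}\log_q|X_n|$ the walk has equidistributed and $\Vert\mu_{X_n}^t-\pi\Vert_{TV}=o(1)$, while for $t\le(1-\eps)C_{d,q}\log_q|X_n|$ the measure $\tilde\mu^t$ sits, up to $o(1)$, on $\bigcup_{|\lambda-tv|\le A\sqrt t}S_\lambda$, of total size $q^{(1-\eps+o(1))\log_q|X_n|}\ll|X_n|$, whose $p$‑image has $\ll|X_n|$ vertices, forcing $\Vert\mu_{X_n}^t-\pi\Vert_{TV}=1-o(1)$. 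Cutoff at $C_{d,q}\log_q|X_n|$ follows, with window $O(\sqrt{\log|X_n|})$ inherited from the $\sqrt t$ fluctuations of $\Lambda_t$ (which swamp the $O(\log\log)$ collision‑free window). I expect the genuine difficulty to be concentrated entirely in this step: making the ``erase the backward steps'' coupling precise — in particular, checking that sideways and backward SRW excursions, although occurring at a positive $q$‑dependent rate, stay pinned near the advancing front and never corrupt the collision‑free structure — and keeping every error term $o(1)$ uniformly in $n$ and in $\lambda$ over the $\sqrt t$‑window.
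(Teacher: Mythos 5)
Your overall architecture — lift to $\mathcal{B}$, use $K$-invariance to reduce the conditional law given the radial position to the uniform measure on a sphere, run a CLT for the radial chain to get the time $C_{d,q}\log_q|X_n|$ with a $\sqrt{\log}$ window, and use sphere-size versus $|X_n|$ for the lower bound — matches the paper, and your identification of $C_{d,q}$ as the reciprocal of a drift pairing agrees with the paper's $1/\mathscr{E}_d$. The genuine gap is in the ``crux'' step, and it is a real one, not a matter of polish.

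First, there is no stochastic coupling to perform: once you condition on the radial coordinate $\lambda$, the walker \emph{is} uniform on $S_\lambda$, exactly, by $K$-invariance. What remains is a purely deterministic problem: show that the pushforward to $X_n$ of the uniform measure on $S_\lambda$ is close to $\pi_{X_n}$. Your proposal re-randomizes this into an ``erase the backward/sideways steps'' coupling and then appeals to the total-variation cutoff of \cite{Lubetzky2017RandomWalks} as a black box. Besides being unnecessary, this route fails on its own terms: you write that the surgered path becomes ``a trajectory of \emph{one of} the collision-free walks,'' but each collision-free walk $T_j$ of \cite{Lubetzky2017RandomWalks} moves only along a single axis of $\mathcal{S}\cong\mathbb{N}^{d-1}$, so a single such walk can never reach a generic $\lambda$ deep inside the Weyl chamber. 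This is precisely the obstruction the paper flags in its introduction; the resolution there is an exact Hecke-algebra identity (Proposition \ref{prop:fiber-decomposition}) expressing the sphere as the image of $\xi$ under a \emph{concatenation}
\[
T_{\vec{x}} \;=\; T_{d-1,0}\,\prod_{j=d-1}^{1} T_j^{\,j x_j}\, T_{j-1,j},
\]
running collision-free flows on cells of every dimension $1,\dots,d-1$ in turn, with transition operators $T_{j-1,j}$ between dimensions. No erasure, no coupling error, no ``front-localization'' claim is needed; the identity $\Phi^{-1}(\vec{x}) = T_{\vec{x}}(\xi)$ is proved by an explicit Cartan-style double-coset computation.

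Second, even after fixing the combinatorics, invoking the TV-cutoff theorem of \cite{Lubetzky2017RandomWalks} is the wrong quantitative tool. You need to bound the TV distance of the pushforward of a \emph{composition} of several different operators, not the long-time behavior of a single one; a cutoff statement for each factor does not compose. The paper instead uses the $L^2$ operator-norm bound for $r$-normal Ramanujan digraphs (Theorem \ref{thm:digraph-bound}, inequality \eqref{eq:Ram-dig-power}), applied to each factor $\widetilde{T_j}^{jx_j}$ on $L_0^2$, multiplies the resulting bounds (together with exact norms for the $\widetilde{T}_{j-1,j}$), and converts $L^2$ to TV via $\|\cdot\|_{TV}\le\tfrac{\sqrt{n}}{2}\|\cdot\|_2$. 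Without replacing your coupling-plus-cutoff step by this concatenation-plus-operator-norm argument (or some equivalent), the proposal does not close.
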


We emphasize that the graphs underlying these walks are not Ramanujan
graphs when $d>2$. For example, in the two-dimensional case (where
$d=3$), the $1$-skeleton of $X$ is a $2(q^{2}+q+1)$-regular graph.
If it was a Ramanujan graph, its second largest adjacency eigenvalue
would be bounded by $2\sqrt{2q^{2}+2q+1}\approx2.8q$, but in truth
this eigenvalue equals $6q-o_{n}\left(1\right)$ (cf.\ \cite{saloff1997transition,li2004ramanujan}),
reflecting the abundance of triangles in $X$. In this case, the cutoff
is achieved at time $\frac{q^{2}+q+1}{2(q^{2}-1)}\log_{q}n$ (see
Theorem \ref{thm:cutoff-pgl3}).\medskip{}

A motivation for our result which does not require the notions of
Ramanujan complexes or buildings, is the study of expansion in finite
simple groups (see \cite{Breuillard2018Expansionsimplegroups} for
a recent survey). A celebrated result of Lubotzky, Phillips and Sarnak
\cite{LPS88} uses the building of $\mathrm{PGL}_{2}(\mathbb{Q}_{p})$
to show that the groups $\mathrm{PSL}_{2}(\mathbb{F}_{q})$ have explicit
generators for which the resulting Cayley graphs are Ramanujan, so
that \cite{lubetzky2016cutoff} yields total-variation cutoff for
SRW on these groups. Turning to $\mathrm{PSL}_{d}(\mathbb{F}_{q})$,
the work of \cite{lubetzky2016cutoff} does not apply anymore, since
it is not known whether $\mathrm{PSL}_{d}(\mathbb{F}_{q})$ have generators
which yield Ramanujan Cayley graphs. Nevertheless, by considering
the building of $\mathrm{PGL}_{d}(\mathbb{F}_{q}((t)))$, it was shown
by Lubotzky, Samuels and Vishne (\cite{Lubotzky2005b}, see also \cite{sarveniazi2007explicit})
that the groups $\mathrm{PSL}_{d}(\mathbb{F}_{q})$ have explicit
generators, for which the resulting Cayley graph is precisely the
$1$-skeleton of a Ramanujan complex of type $\widetilde{A}_{d}$.
For $d=3$, such generators can also be given using the building of
$\mathrm{PGL}_{3}(\mathbb{Q}_{p})$ \cite{Evra2018RamanujancomplexesGolden,Ballantine2018ExplicitCayleyRamanujan}.
We thus achieve:
\begin{cor}
\begin{enumerate}
\item Fix $d\geq3$ and a prime power $q$. The family $\{\mathrm{PSL}_{d}(\mathbb{F}_{q^{\ell}})\}_{\ell\rightarrow\infty}$
has an explicit symmetric set of $k=\sum\nolimits _{j=1}^{d-1}\left[\begin{smallmatrix}d\\
j
\end{smallmatrix}\right]_{q}$ \textup{(Gaussian binomial coefficients)} generators exhibiting TV-cutoff.
\item For $d=3$ and infinitely many pairs of primes $p\neq q$, the family
$\mathrm{PSL}_{3}(\mathbb{F}_{q})$ has an explicit symmetric set
of $2\left(p^{2}+p+1\right)$ generators exhibiting TV-cutoff (at
time $\frac{p^{2}+p+1}{2(p^{2}-1)}\log_{p}\left|\mathrm{PSL}_{3}(\mathbb{F}_{q})\right|$).
\end{enumerate}
\end{cor}

We remark that even though this is a claim on Cayley graphs, the proof
makes use of the high-dimensional geometry of their clique complexes!\medskip{}

Let us briefly explain our strategy for proving Theorem \ref{thm:main}.
Given a walk on $X$, we lift it to a walk on $\mathcal{B}$, and
then project it to a sector $\mathcal{S}\subseteq\mathcal{B}$, which
can be identified with the quotient of $\mathcal{B}$ by the stabilizer
of the starting point of the walk. If $X$ is a $k$-regular graph
then $\mathcal{B}$ is the $k$-regular tree, and $\mathcal{S}$ is
simply an infinite ray which we identify with $\mathbb{N}$: SRW on
$\mathcal{B}$ then projects to a $\left(\frac{1}{k},\frac{k-1}{k}\right)$-biased
walk on this ray, and the projected location $\ell\in\mathbb{N}$
of the walker is precisely its distance from the starting point. On
the other hand, this point is also the projection to $\mathcal{S}$
of all terminal vertices of non-backtracking walks of length $\ell$;
combining this with the optimal cutoff for NBRW is used to establish
SRW cutoff in \cite{lubetzky2016cutoff}.

For the building of dimension $d$, the so called \emph{Cartan decomposition
}gives an isomorphism $\mathcal{S}\cong\mathbb{N}^{d}$, and the projected
walk from $\mathcal{B}$ on $\mathcal{S}$ is an explicit, homogeneous,
drifted walk on $\mathbb{N}^{d}$ (with appropriate boundary conditions).
Following the Lubetzky-Peres strategy, we would have liked to use
this walk to reduce SRW-cutoff to some collision-free walk from \cite{Lubetzky2017RandomWalks},
for which optimal cutoff is already established. However, the terminal
vertices of the various walks studied in \cite{Lubetzky2017RandomWalks}
are all located on the special rays in $\mathcal{S}$ which correspond
to the standard axes in $\mathbb{N}^{d}$. This is enough for the
graph case (when $d=1$), but not in general. Our solution combines
all the walks from \cite{Lubetzky2017RandomWalks}, using cells of
all positive dimensions. For each point $\alpha\in\mathcal{S}$, we
construct a concatenation of collision-free walks on cells of different
dimensions, so that the possible paths of the walk terminate in a
uniform vertex in the preimage of $\alpha$ in $\mathcal{B}$. The
results of \cite{Lubetzky2017RandomWalks,Parzanchevski2018RamanujanGraphsDigraphs}
are then used to bound the total-variation mixing of the corresponding
concatenated walk on a Ramanujan complex.

\medskip{}

For the convenience of the reader, we have divided the proof to the
two-dimensional case (namely $\mathrm{PGL}_{3}$) in §\ref{sec:d=00003D3},
and the general case in §\ref{sec:The-d-case}. The case of $d=3$
is considerably simpler, due to the fact that it has additional symmetry:
in this case $\mathrm{PGL}_{3}(F)$ acts transitively on the cells
of every dimension of $\mathcal{B}$ (see \cite{kang2014zeta,Golubev2013triangle,Kaufman2019Freeflagsover}
for detailed combinatorial studies of $\mathcal{B}\left(\mathrm{PGL}_{3}(F)\right)$.)
In addition, it is easier to visualize (see Figure \ref{fig:The-folded-apartment}),
and some computations can be made more explicit and give sharper bounds.
\begin{acknowledgement*}
The authors are grateful to Ori Gurel-Gurevich for his help with proving
Prop.\ \ref{Prop:Transience2}. They also thank Eyal Lubetzky, Alex
Lubotzky and Nati Linial for helpful discussions and encouragement.
M.C.\ was supported by ERC grant 339096 of Nati Linial and by ERC,
BSF and NSF grants of Alex Lubotzky. O.P.\ was supported by ISF grant
2990/21.
\end{acknowledgement*}

\section{\label{sec:Preliminaries-and-notations}Preliminaries and notations}

We briefly recall the notion of Bruhat-Tits buildings of type $\widetilde{A}_{d}$
and the Ramanujan complexes associated with them. For a more detailed
introduction, we refer the reader to \cite{li2004ramanujan,Lubotzky2005a,Lubotzky2013}.

\subsection{\label{subsec:Bruhat-Tits-buildings}Bruhat-Tits buildings}

Let $F$ be a nonarchimedean local field with ring of integers $\mathcal{O}$,
uniformizer $\varpi$, and residue field $\nicefrac{\mathcal{O}}{\varpi\mathcal{O}}$
of size $q$. The simplest examples are $F=\mathbb{Q}_{p}$ with $\left(\mathcal{O},\varpi,q\right)=\left(\mathbb{Z}_{p},p,p\right)$
, and $F=\mathbb{F}_{q}\left(\left(t\right)\right)$ with $\left(\mathcal{O},\varpi,q\right)=\left(\mathbb{F}_{q}\left[\left[t\right]\right],t,q\right)$.
Let $G=\mathrm{PGL}_{d}\left(F\right)$ and $K=\mathrm{PGL}_{d}(\mathcal{O})$,
which is a maximal compact subgroup of $G$. The Bruhat-Tits building
$\mathcal{B}=\mathcal{B}\left(G\right)$ of type $\widetilde{A}_{d-1}$
associated with $G$ is an infinite, contractible, $\left(d-1\right)$-dimensional
simplicial complex, on which $G$ acts faithfully. Denoting by $\mathcal{B}^{j}$
the cells of $\mathcal{B}$ of dimension $j$, the action of $G$
on $\mathcal{B}$ is transitive both on $\mathcal{B}^{0}$ and $\mathcal{B}^{d-1}$.
Furthermore, there is a vertex, which we denote by $\xi$, whose stabilizer
is $K$, so that $\mathcal{B}^{0}$ can be identified with left $K$-cosets
in $G$. In this manner each vertex $g\xi$ is associated with the
$F$-homothety class of the $\mathcal{O}$-lattice $g\mathcal{O}^{d}\leq F^{d}$.
A collection of vertices $\left\{ g_{i}\xi\right\} _{i=0}^{r}$ forms
an $r$-cell if, possibly after reordering, there exist scalars $\alpha_{i}\in F^{\times}$
such that
\[
\varpi g{}_{0}\mathcal{O}^{d}<\alpha_{r}g{}_{r}\mathcal{O}^{d}<\alpha_{r-1}g{}_{r-1}\mathcal{O}^{d}<\ldots<\alpha_{1}g{}_{1}\mathcal{O}^{d}<g{}_{0}\mathcal{O}^{d}.
\]
It follows that the link of a vertex in $\mathcal{B}$ can be identified
with the spherical building of $\mathrm{PGL}_{d}(\mathcal{O}/\varpi\mathcal{O})\cong\mathrm{PGL}_{d}(\mathbb{F}_{q})$,
the finite complex whose cells corresponds to flags in $\mathbb{F}_{q}^{d}$.
In particular, its vertices correspond to nonzero proper subspaces
of $\mathbb{F}_{q}^{d}$, so that the degree of the vertices in $\mathcal{B}$
is 
\[
\deg\left(\xi\right)=\sum\nolimits _{j=1}^{d-1}\left[\begin{smallmatrix}d\\
j
\end{smallmatrix}\right]_{q},\text{ where }\left[\begin{smallmatrix}d\\
j
\end{smallmatrix}\right]_{q}\text{ are Gaussian binomial coefficients}
\]
(see examples in Table \ref{tab:deg_xi}). The vertices of $\mathcal{B}$
are colored by the elements of $\mathbb{Z}/d\mathbb{Z}$, via 
\begin{equation}
\col\left(g\xi\right)=\mathrm{ord}_{\varpi}\det\left(g\right)\in\mathbb{Z}/d\mathbb{Z}\qquad\left(g\in\mathrm{PGL}_{d}(F)\right),\label{eq:color}
\end{equation}
and this coloring makes $\mathcal{B}$ a $d$-partite complex, namely,
every $\left(d-1\right)$-cell contains all colors. For $j\geq1$,
we say that an ordered cell $\sigma\in\mathcal{B}^{j}$ is of \emph{type
one }if $\col\sigma_{i+1}\equiv\col\sigma_{i}+1\Mod{d}$ for $0\leq i<j$,
and we denote by $\mathcal{B}_{1}^{j}$ all $j$-cells of type one.

\subsection{\label{subsec:Ramanujan-complexes}Ramanujan complexes}

A \emph{branching operator }on a set $\Omega$ is a function $T\colon\Omega\to2^{\Omega}.$
By a \emph{geometric }operator $T$ on $\mathcal{B}$ we mean a branching
operator on some subset $\mathcal{C}$ of the cells of $\mathcal{B}$
(e.g., all cells of dimension $j$), which commutes with the action
of $G$. If $\Gamma$ is a torsion-free lattice in $G=\mathrm{PGL}_{d}\left(F\right)$
then the quotient $X=\Gamma\backslash\mathcal{B}$ is a finite complex,
equipped with a covering map $\varphi\colon\mathcal{B}\rightarrow X$,
and $T$ induces a branching operator $T|_{X}$ on the cells $\Gamma\backslash\mathcal{C}$
in $X$, via $T|_{X}=\varphi T\varphi^{-1}$. A function on $X$ is
considered \emph{trivial }if its lift to a function on $\mathcal{B}$
is constant on every orbit of $G'=\mathrm{PSL}_{d}\left(F\right)$,
and an eigenvalue of $T|_{X}$ is called trivial if its eigenfunction
is trivial. Denote by $L_{\col}^{2}\left(X\right)$ the space of trivial
functions, and by $L_{0}^{2}\left(X\right)$ its orthogonal complement.
\begin{defn}
The complex $X=\Gamma\backslash\mathcal{B}$ is called a \emph{Ramanujan
complex }if for every geometric operator $T$ on $\mathcal{C}\subseteq\mathcal{B}$,
the nontrivial spectrum $\mathrm{Spec}(T|_{L_{0}^{2}(\Gamma\backslash\mathcal{C})})$
is contained in the spectrum of $T$ acting on $L^{2}\left(\mathcal{C}\right)$.
\end{defn}

In the above notations, we denote by $\mathcal{D}_{T}\left(\mathcal{B}\right)$
the digraph with vertices $\mathcal{C}$, and edges $\left\{ \sigma\rightarrow\sigma'|\sigma\in\mathcal{C},\sigma'\in T\left(\sigma\right)\right\} $,
and similarly $\mathcal{D}_{T}\left(X\right)$ for the induced digraph
on $\Gamma\backslash\mathcal{C}$.
\begin{thm}[{\cite[Thm.\ 3 and Prop.\ 5.3]{Lubetzky2017RandomWalks}}]
\label{thm:Col-Free-Ram}Let $T$ be a $k$-regular geometric operator
on $\mathcal{B}_{1}^{j}$. If $\mathcal{D}_{T}\left(\mathcal{B}\right)$
is collision-free and $X=\Gamma\backslash\mathcal{B}$ is a Ramanujan
complex, then $\mathcal{D}_{T}\left(X\right)$ is a $\left(d\right)_{j}$-normal
Ramanujan digraph (where $\left(d\right)_{j}=d!/\left(d-j\right)!).$
\end{thm}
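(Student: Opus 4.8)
We recall the notions involved: a $k$-regular digraph is a \emph{Ramanujan digraph} when the spectrum of its adjacency operator, away from the finitely many \emph{trivial} eigenvalues (which have modulus $k$), lies in the disk $\{z:|z|\le\sqrt{k}\}$, and an operator $A$ is \emph{$r$-normal} when the ambient Hilbert space decomposes orthogonally into $A$- and $A^{*}$-invariant subspaces of dimension $\le r$ (equivalently, $A$ is unitarily block-diagonal with blocks of size $\le r$). Writing $A:=A_{T|_{X}}$ for the adjacency operator of $\mathcal{D}_{T}(X)$ on $L^{2}(\Gamma\backslash\mathcal{B}_{1}^{j})$, we must check that $\mathcal{D}_{T}(X)$ is $k$-regular, that $A$ is $r$-normal with $r=d!/(d-j)!$, and that the nontrivial spectrum of $A$ lies in the disk of radius $\sqrt{k}$. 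Regularity is immediate: $k$-out-regularity from $T|_{X}=\varphi T\varphi^{-1}$ together with local injectivity of the covering $\varphi$ (using that $\Gamma$ is torsion-free), and $k$-in-regularity because the reverse operator $T^{*}$ is again a $k$-regular geometric operator.

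For $r$-normality the plan is to harmonically analyse on the group. Since $X$ is finite, $\Gamma$ is a cocompact lattice and $L^{2}(\Gamma\backslash G)=\bigoplus_{\pi}m(\pi)\,\pi$ with finite multiplicities; and the coloring \eqref{eq:color} (translations of nonzero $\varpi$-valuation realize every color shift, while $G$ is transitive on chambers) makes $G$ transitive on $\mathcal{B}_{1}^{j}$. Fixing a base type-one $j$-cell $\sigma_{0}$ with (parahoric) stabiliser $K_{0}$ we get $\mathcal{B}_{1}^{j}\cong G/K_{0}$, hence
\[
L^{2}(\Gamma\backslash\mathcal{B}_{1}^{j})=L^{2}(\Gamma\backslash G/K_{0})=\textstyle\bigoplus_{\pi}m(\pi)\,\pi^{K_{0}}.
\]
Being geometric, $A$ and $A^{*}=A_{T^{*}}$ lie in the Hecke algebra $\mathcal{H}(G,K_{0})$, so each summand $\mathbb{C}^{m(\pi)}\otimes\pi^{K_{0}}$ is $A$- and $A^{*}$-invariant with $A$ acting as $\mathrm{Id}\otimes(A|_{\pi^{K_{0}}})$; splitting the $m(\pi)$ copies yields a decomposition into blocks of dimension $\dim\pi^{K_{0}}$, so it remains to show $\dim\pi^{K_{0}}\le d!/(d-j)!$ for every irreducible admissible $\pi$. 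A nonzero $\pi^{K_{0}}$ (hence a nonzero $\pi^{I}$ for an Iwahori $I\subseteq K_{0}$) forces $\pi$ to be a subquotient of an unramified principal series $\mathrm{Ind}_{B}^{G}\chi$; since invariants under the compact group $K_{0}$ are exact, $\dim\pi^{K_{0}}\le\dim(\mathrm{Ind}_{B}^{G}\chi)^{K_{0}}=|W_{0}/W_{L}|$, the standard count of $K_{0}$-fixed vectors in a full unramified principal series, with $W_{0}\cong S_{d}$ the finite Weyl group of $PGL_{d}$ and $W_{L}$ the Weyl group of the Levi of $K_{0}$. Finally, a type-one $j$-cell is the face of the fundamental alcove spanned by $j+1$ \emph{consecutive} vertices of the $d$-cycle $\widetilde{A}_{d-1}$, so the complementary $d-j-1$ (again consecutive) vertices give $W_{L}\cong S_{d-j}\le S_{d}$, whence $|W_{0}/W_{L}|=d!/(d-j)!$. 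This establishes $r$-normality with $r=d!/(d-j)!$, and the value is sharp (attained on generic unramified principal series).

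For the spectral bound, apply the definition of a Ramanujan complex (\S\ref{subsec:Ramanujan-complexes}) to the geometric operator $T$ on $\mathcal{C}=\mathcal{B}_{1}^{j}$: its nontrivial spectrum on $L_{0}^{2}(\Gamma\backslash\mathcal{C})$ is contained in the spectrum of $A_{T}$ acting on $L^{2}(\mathcal{B}_{1}^{j})$, which reduces the estimate to the universal cover. There one decomposes $L^{2}(\mathcal{B}_{1}^{j})=L^{2}(G/K_{0})$ by the Plancherel measure of $G$, which is supported on the tempered dual; $A_{T}$ acts in each tempered $\pi$ as the same Hecke element, and the collision-free hypothesis forces the forward reachability of $\mathcal{D}_{T}(\mathcal{B})$ to be tree-like, which --- through the Satake/Macdonald parametrization --- bounds the modulus of every eigenvalue of this Hecke element on a tempered representation by $\sqrt{k}$; this is the higher-rank counterpart of the identity $\mathrm{spr}(B_{\mathrm{tree}})=\sqrt{k-1}$ for the non-backtracking operator on the $k$-regular tree. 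It remains to identify the trivial spectrum: $L_{\col}^{2}(X)$ is spanned by eigenfunctions pulled back along the $G'=PSL_{d}(F)$-orbit map and the coloring, and $A$ acts on these by $k$ times a root of unity; these modulus-$k$ eigenvalues are precisely the trivial ones allowed by the definition of a Ramanujan digraph, so the three claims together give the theorem.

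The crux is the spectral estimate over the building: converting the purely combinatorial collision-free condition into the uniform bound $\le\sqrt{k}$ over the tempered spectrum of $PGL_{d}(F)$. By contrast the $r$-normality argument is essentially formal once the parahoric combinatorics of type-one cells in $\widetilde{A}_{d-1}$ is in place; there the remaining care is in the precise form of the $K_{0}$-fixed-vector count and in pinning down the trivial spectrum.
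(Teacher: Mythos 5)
This theorem is quoted in the paper directly from \cite{Lubetzky2017RandomWalks} (it is labeled there as Theorem~3 together with Proposition~5.3), and the present paper supplies no proof of it, so there is no ``paper's own proof'' against which to compare; I will therefore assess your argument on its own terms.

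Your treatment of $r$-normality is essentially right and follows the standard representation-theoretic route. After identifying $\mathcal{B}_{1}^{j}\cong G/K_{0}$ for the parahoric $K_{0}=P_{j}$ and writing $L^{2}(\Gamma\backslash\mathcal{B}_{1}^{j})=\bigoplus_{\pi}m(\pi)\,\pi^{K_{0}}$, the geometric operator lies in $\mathcal{H}(G,K_{0})$, so the blocks have size $\dim\pi^{K_{0}}$. A nonzero $\pi^{K_{0}}$ forces $\pi^{I}\neq0$, hence (Borel--Casselman) $\pi$ is a subquotient of an unramified principal series, and exactness of $K_{0}$-invariants yields $\dim\pi^{K_{0}}\leq[W_{0}:W_{L}]$; removing the $j+1$ consecutive nodes of a type-one $j$-cell from the affine Dynkin $d$-cycle leaves a type-$A_{d-j-1}$ sub-diagram, so $W_{L}\cong S_{d-j}$ and $[W_{0}:W_{L}]=d!/(d-j)!$. (One nit: you should justify that $G$ acts transitively on $\mathcal{B}_{1}^{j}$, which holds because $G$ is transitive on chambers and permutes the colors of a chamber transitively via $\col$, not because every irreducible representation is relevant.)

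By contrast, the spectral (Ramanujan) half of your argument is not a proof. You correctly use the Ramanujan hypothesis on $X$ to reduce to bounding the spectrum of $A_{T}$ on $L^{2}(\mathcal{B}_{1}^{j})$ by $\sqrt{k}$, but then assert that ``the collision-free hypothesis forces the forward reachability of $\mathcal{D}_{T}(\mathcal{B})$ to be tree-like, which — through the Satake/Macdonald parametrization — bounds the modulus of every eigenvalue of this Hecke element on a tempered representation by $\sqrt{k}$.'' This sentence names the goal without supplying an argument. Collision-freeness of a $k$-regular digraph gives only $\|A^{n}\delta_{u}\|_{2}^{2}=k^{n}$ for each vertex $u$ (a lower bound on $\|A^{n}\|$, not an upper bound), and the step from ``at most one directed path between any two vertices'' to an $\ell^{2}\to\ell^{2}$ operator-norm bound of $k^{n/2}$, or equivalently to spectral radius $\sqrt{k}$ across the tempered dual, is precisely the nontrivial content of \cite{Lubetzky2017RandomWalks}. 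One would have to either (i) show directly that $\|A_{T}^{n}\|_{L^{2}(\mathcal{B}_{1}^{j})}\leq Cn^{r-1}k^{n/2}$ using the coset factorization $(P_{j}w_{j}P_{j})^{n}=P_{j}w_{j}^{n}P_{j}$ that collision-freeness encodes, or (ii) carry out the Satake-parameter estimate for the Hecke element $\one_{P_{j}w_{j}P_{j}}$ on tempered representations of $PGL_{d}(F)$. As written, the crux is asserted rather than proved.
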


This requires some explanation. A $k$-regular digraph $\mathcal{D}$
is called:
\begin{enumerate}
\item \emph{collision-free }if it has at most one directed path between
any two vertices;
\item \emph{$r$-normal} if its adjacency matrix $A_{\mathcal{D}}$ is unitarily
similar to a block diagonal matrix with blocks of size at most $r\times r$;
\item a\emph{ Ramanujan digraph }if the spectrum of $A_{\mathcal{D}}$ is
contained in $\{z\in\mathbb{C}\,|\,\left|z\right|=k\text{ or }\left|z\right|\leq\sqrt{k}\}$.
\end{enumerate}
Denoting by $L_{0}^{2}\left(\mathcal{D}\right)$ the orthogonal complement
to all $A_{\mathcal{D}}$-eigenfunctions with eigenvalue of absolute
value $k$, we have:
\begin{thm}[{\cite[Prop.\ 4.1]{Parzanchevski2018RamanujanGraphsDigraphs}}]
\label{thm:digraph-bound}If $\mathcal{D}$ is a $k$-regular $r$-normal
digraph with $\lambda=\max\{\left|z\right||z\in\mathrm{Spec}(A_{\mathcal{D}}|_{L_{0}^{2}(\mathcal{D})})\}$,
then $\left\Vert \vphantom{\big|}\smash{A_{\mathcal{D}}^{\ell}\big|_{L_{0}^{2}(\mathcal{D})}}\right\Vert _{2}\leq{\ell+r-1 \choose r-1}k^{r-1}\lambda^{\ell-r+1}$.
\end{thm}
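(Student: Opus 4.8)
The plan is to use $r$-normality to reduce the statement to a bound on the powers of a single matrix of size at most $r$, and then to control those powers by passing to Schur form and estimating the resulting matrix entrywise (rather than by submultiplicativity, which is too lossy here).

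First I would reduce to a single block. Write $A=A_{\mathcal D}$. By $r$-normality, $\mathbb C^{\mathcal D^{0}}$ is an orthogonal direct sum of $A$-invariant subspaces $V_{i}$ with $\dim V_{i}\le r$; put $B_{i}=A|_{V_{i}}$, so $\|B_{i}\|\le\|A\|=k$. Since $\mathcal D$ is $k$-in and $k$-out regular, $A/k$ is doubly stochastic, hence a contraction; so any $A$-eigenvector $v$ with eigenvalue $z$ of modulus $k$ satisfies $\|Av\|=\|A\|\,\|v\|$, which forces $A^{*}Av=k^{2}v$ and then $A^{*}v=\bar z v$. Thus the span $L_{0}^{2}(\mathcal D)^{\perp}$ of the modulus-$k$ eigenfunctions reduces $A$, and inside each $V_{i}$ the subspace $U_{i}=V_{i}\cap L_{0}^{2}(\mathcal D)^{\perp}$ reduces $B_{i}$; the same computation shows that no eigenvalue of modulus $k$ of $B_{i}$ is defective, so $B_{i}|_{U_{i}^{\perp}}$ has all eigenvalues of modulus $<k$, hence (being eigenvalues of $A|_{L_{0}^{2}(\mathcal D)}$) of modulus $\le\lambda$. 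As $A|_{L_{0}^{2}(\mathcal D)}\cong\bigoplus_{i}B_{i}|_{U_{i}^{\perp}}$, it suffices to prove: if $B$ is $t\times t$ with $t\le r$, $\|B\|\le k$ and spectral radius $\le\lambda$, then $\|B^{\ell}\|\le\binom{\ell+r-1}{r-1}k^{r-1}\lambda^{\ell-r+1}$.

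Next I would put $B$ in Schur form, $\hat B=\Lambda+N$, with $\Lambda=\diag(\mu_{1},\dots,\mu_{t})$ (the eigenvalues, $|\mu_{i}|\le\lambda$) and $N$ strictly upper triangular, so that $N^{t}=0$, $\|\hat B\|=\|B\|\le k$, and $|N_{ij}|=|\hat B_{ij}|\le k$ for $i<j$. Collecting the words in the expansion of $\hat B^{\ell}$ by the number $m$ of occurrences of $N$ (words with $m\ge t$ vanish),
\[
\hat B^{\,\ell}=\sum_{m=0}^{t-1}S_{m},\qquad S_{m}=\!\!\sum_{a_{0}+\dots+a_{m}=\ell-m}\!\!\Lambda^{a_{0}}N\Lambda^{a_{1}}N\cdots N\Lambda^{a_{m}},
\]
where $S_{0}=\Lambda^{\ell}$. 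A direct computation gives, for $p<q$,
\[
(S_{m})_{pq}=\!\!\sum_{p=k_{0}<k_{1}<\dots<k_{m}=q}\!\! N_{k_{0}k_{1}}\cdots N_{k_{m-1}k_{m}}\,h_{\ell-m}(\mu_{k_{0}},\dots,\mu_{k_{m}}),
\]
where $h_{\ell-m}$ is the complete homogeneous symmetric polynomial, which in $m+1$ variables is a sum of $\binom{\ell}{m}$ monomials. Using $|N_{ij}|\le k$, $|\mu_{i}|\le\lambda$ and that the number of length-$m$ chains from $p$ to $q$ is $\binom{q-p-1}{m-1}$, this yields $|(S_{m})_{pq}|\le\binom{q-p-1}{m-1}\binom{\ell}{m}k^{m}\lambda^{\ell-m}$ (and $\|S_{0}\|=\max_{p}|\mu_{p}|^{\ell}\le\lambda^{\ell}$).

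Finally I would convert this into a norm bound via the Schur test, $\|S_{m}\|\le\bigl(\|S_{m}\|_{1\to1}\|S_{m}\|_{\infty\to1}\bigr)^{1/2}$, hmm — that is, $\|S_{m}\|\le\bigl(\|S_{m}\|_{1\to1}\|S_{m}\|_{\infty\to\infty}\bigr)^{1/2}$; by the hockey-stick identity $\sum_{p}\binom{q-p-1}{m-1}=\binom{q-1}{m}\le\binom{t-1}{m}$ both the row and column sums of $\bigl(|(S_{m})_{pq}|\bigr)$ are at most $\binom{t-1}{m}\binom{\ell}{m}k^{m}\lambda^{\ell-m}$, so $\|S_{m}\|\le\binom{t-1}{m}\binom{\ell}{m}k^{m}\lambda^{\ell-m}$. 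Summing over $m$, bounding $k^{m}\lambda^{\ell-m}\le k^{t-1}\lambda^{\ell-t+1}$ (valid since $\lambda\le k$), and using Vandermonde's identity $\sum_{m}\binom{t-1}{m}\binom{\ell}{m}=\binom{\ell+t-1}{t-1}$ gives $\|\hat B^{\,\ell}\|\le\binom{\ell+t-1}{t-1}k^{t-1}\lambda^{\ell-t+1}$; since this quantity is nondecreasing in $t$ (using $k/\lambda\ge1$ once more) it is at most $\binom{\ell+r-1}{r-1}k^{r-1}\lambda^{\ell-r+1}$, as claimed. The step that needs care is the sharpness of the constant: estimating $\|S_{m}\|$ by $\binom{\ell}{m}\lambda^{\ell-m}\|N\|^{m}$ is too weak, because $\|N\|$ can exceed $k$ already when $t\ge3$; it is the entrywise estimate together with the Schur test and Vandermonde that produces the exact coefficient $\binom{\ell+r-1}{r-1}k^{r-1}$.
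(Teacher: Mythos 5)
The paper does not prove this statement: Theorem \ref{thm:digraph-bound} is imported verbatim from \cite[Prop.\ 4.1]{Parzanchevski2018RamanujanGraphsDigraphs}, so there is no in-paper proof to compare against. Your argument, however, is a correct and self-contained derivation. The reduction to a single block of size $t\le r$ is handled carefully — you correctly observe that $\|A\|=k$ forces any modulus-$k$ eigenvector $v$ to satisfy $A^*v=\bar z v$, which both shows that $L_0^2(\mathcal D)^\perp$ reduces $A$ and that modulus-$k$ eigenvalues of each block $B_i$ are nondefective, so that $A|_{L_0^2}$ is unitarily equivalent to $\bigoplus_i B_i|_{U_i^\perp}$ with each block having spectral radius $\le\lambda$ and norm $\le k$. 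The Schur-form expansion of $\hat B^\ell=\sum_{m=0}^{t-1}S_m$, the identification of $(S_m)_{pq}$ with a sum over chains weighted by complete homogeneous symmetric polynomials, and the counts $\binom{\ell}{m}$ (monomials of $h_{\ell-m}$ in $m{+}1$ variables) and $\binom{q-p-1}{m-1}$ (chains of length $m$) are all right; the Schur test together with the hockey-stick identity gives $\|S_m\|\le\binom{t-1}{m}\binom{\ell}{m}k^m\lambda^{\ell-m}$, and Vandermonde produces $\binom{\ell+t-1}{t-1}k^{t-1}\lambda^{\ell-t+1}$, monotone in $t$ since $k/\lambda\ge1$ and $(\ell+t)/t\ge1$. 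Your remark that the naive bound $\|N\|\le k$ fails for $t\ge3$, making the entrywise estimate $|N_{ij}|\le\|\hat B\|\le k$ together with the Schur test essential, is exactly the right point of care; a straight submultiplicative bound $\|S_m\|\le\binom{\ell}{m}\lambda^{\ell-m}\|N\|^m$ would not yield the stated constant. This is almost certainly the same strategy (Schur form plus entrywise control) used in the cited source, so the two proofs are in the same spirit.
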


In particular, if $\mathcal{D}$ is a $k$-regular $r$-normal Ramanujan
digraph, then we have $\left|\lambda\right|\leq\sqrt{k}$ for every
$\lambda\in\mathrm{Spec}(A_{\mathcal{D}}|_{L_{0}^{2}(\mathcal{D})})$,
so that 
\begin{equation}
\left\Vert A_{\mathcal{D}}^{\ell}\smash{\big|_{L_{0}^{2}\left(\mathcal{D}\right)}}\right\Vert _{2}\leq{\ell+r-1 \choose r-1}k^{(\ell+r-1)/2}\leq\left(\ell+r\right)^{r}k^{(\ell+r)/2}.\label{eq:Ram-dig-power}
\end{equation}

\subsection{\label{subsec:Cartan-decomposition}Cartan decomposition}

With the notations of §\ref{subsec:Bruhat-Tits-buildings}, the \emph{fundamental
apartment} $\mathcal{A}\subseteq\mathcal{B}$ is the subcomplex of
$\mathcal{B}$ induced by all translations of $\xi$ by diagonal matrices
in $G$. Geometrically, $\mathcal{A}$ is a simplicial tessellation
of the affine space $\mathbb{R}^{d-1}$. The edges in $\mathcal{A}$
are as follow: every vertex $\varpi^{\alpha}\xi=\mathrm{diag}\left(\varpi^{\alpha_{1}},...,\varpi^{\alpha_{d-1}},\varpi^{\alpha_{d}}\right)\xi$
is connected to $\varpi^{\alpha+\gamma}\xi$ where $\gamma$ runs
over all non-constant binary vectors, i.e.\ $\gamma\in\left\{ 0,1\right\} ^{d}\backslash\left\{ \zero,\one\right\} $
(here $\zero$ and $\one$ denote the all-zero and all-one vectors
in $\left\{ 0,1\right\} ^{d}$, respectively).

We denote by $\mathcal{S\subseteq\mathcal{A}}$ the sector in $\mathcal{A}$
induced by $A\xi$, where 
\[
A=\{\varpi^{\alpha}=\mathrm{diag}\left(\varpi^{\alpha_{1}},...,\varpi^{\alpha_{d-2}},\varpi^{\alpha_{d-1}},1\right)\,|\,\alpha_{1}\ge...\ge\alpha_{d-1}\ge\alpha_{d}=0\}.
\]
It is easy to see that $\mathcal{S}$ is a fundamental domain for
the action of $S_{d}\leq G$ (the so called \emph{spherical Weyl group})
on $\mathcal{A}$. We identify $\mathcal{S}$ with $\mathbb{N}^{d-1}$
via 
\[
\mathrm{diag}\left(\varpi^{\alpha_{1}},...,\varpi^{\alpha_{d-2}},\varpi^{\alpha_{d-1}},1\right)\xi\mapsto(\alpha_{1}-\alpha_{2},...,\alpha_{d-2}-\alpha_{d-1},\alpha_{d-1}),
\]
thereby giving $\mathbb{N}^{d-1}$ a graph structure. Denote by $\partial\mathcal{S}$
the boundary of $\mathcal{S}$, which corresponds to $\partial\mathbb{N}^{d-1}=\left\{ \vec{x}\in\mathbb{N}^{d-1}\,|\,x_{i}=0\text{ for some }i\right\} $.
Except at $\partial\mathcal{S}$, the edges are the same as in $\mathcal{A}$,
parameterized by $\gamma\in\left\{ 0,1\right\} ^{d}\backslash\left\{ \zero,\one\right\} $.
For $\varpi^{\alpha}\in\partial\mathcal{S}$, it might happen that
$\varpi^{\alpha+\gamma}\notin\mathcal{S}$, e.g.\ when $\alpha_{i}+\gamma_{i}>\alpha_{i-1}+\gamma_{i-1}$,
and one obtains the appropriate terminus of $\gamma$ by reordering
the entries of $\varpi^{\alpha+\gamma}$ in descending order, and
then dividing it by its last coordinate if it is not $1$. The case
of $d=3$ is depicted in Figure \ref{fig:The-folded-apartment}.

The Cartan decomposition for $\mathrm{PGL}_{d}$ states that
\[
G=\bigsqcup\nolimits _{a\in A}KaK,\quad\text{ or (equivalently) }\quad\mathcal{B}^{0}=\bigsqcup\nolimits _{a\in A}Ka\xi,
\]
and the proof is a simple exercise (see e.g.\ \cite[§13.2]{Goldfeld2011AutomorphicrepresentationsL}).
It follows that $\mathcal{S}$ can also be identified with the quotient
of $\mathcal{B}$ by $K$, and we denote the obtained projection from
$\mathcal{B}$ to $\mathbb{N}^{d-1}$ by $\Phi$. In conclusion, we
have identified four complexes: $K\backslash\mathcal{B}\cong S_{d}\backslash\mathcal{A}\cong\mathcal{S}\cong\mathbb{N}^{d-1}$.

\section{The ${\rm PGL}_{3}$ case\label{sec:d=00003D3}}

In this section $\BT=\mathcal{B}_{3,F}$ is the two-dimensional Bruhat-Tits
building of $G=\mathrm{PGL}_{3}\left(F\right)$. The $1$-skeleton
of $\BT$ is a $k$-regular graph, with $k=\deg\left(\xi\right)=2(q^{2}+q+1)$,
where $q$ is the size of the residue field of $F$.
\begin{thm}
\label{thm:cutoff-pgl3}Let $X=\Gamma\backslash\mathcal{B}_{3,F}$
be a Ramanujan complex with $n$ vertices. Then SRW on the underlying
graph of $X$ has total-variation cutoff at time $\frac{q^{2}+q+1}{q^{2}-1}\log_{q^{2}}n$
with a window of size $O(\sqrt{\log n})$.
\end{thm}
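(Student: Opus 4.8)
The proof follows the Lubetzky--Peres strategy of \cite{lubetzky2016cutoff}: lift SRW to the building $\BT$ and analyse its projection to the sector $\mathcal{S}\cong\mathbb{N}^{2}$ from \S\ref{subsec:Cartan-decomposition}. Since $\xi$ is fixed by $K=PGL_{3}(\mathcal{O})$ and SRW commutes with the $G$-action, the time-$t$ law $\mu_{t}$ of SRW on $\BT$ started at $\xi$ is $K$-invariant; as $K$ acts transitively on each Cartan fibre $\Phi^{-1}(\alpha)=Ka_{\alpha}\xi$ (with $a_{\alpha}=\varpi^{\alpha}$), we may write $\mu_{t}=\sum_{\alpha\in\mathbb{N}^{2}}p_{t}(\alpha)\,\nu_{\alpha}$, where $p_{t}=\Phi_{*}\mu_{t}$ and $\nu_{\alpha}$ is the uniform probability measure on the finite set $Ka_{\alpha}\xi$. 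Commutativity of the Hecke algebra of $K$-bi-invariant operators makes $p_{t}$ the $t$-step law of an honest nearest-neighbour Markov chain on $\mathbb{N}^{2}$, the \emph{projected walk}. Pushing forward along the covering $\varphi\colon\BT\to X$ gives $\mu_{t}^{X}:=\varphi_{*}\mu_{t}=\sum_{\alpha}p_{t}(\alpha)\,\varphi_{*}\nu_{\alpha}$, so it remains to (i) pin down the projected walk and (ii) bound $\Vert\varphi_{*}\nu_{\alpha}-\pi_{X}\Vert_{TV}$ in terms of $\alpha$.

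For (i): reading the transition rule off the link of a vertex -- the incidence graph of $\mathbb{P}^{2}(\mathbb{F}_{q})$ -- from an interior vertex the $k=2(q^{2}+q+1)$ neighbours project to the six hexagonal directions of $\mathbb{N}^{2}$ with multiplicities $q^{2},q,1$ inside each of the two colour-classes, according to the generic / middle / backtracking position of a point of $\mathbb{P}^{2}$ relative to the anchor flag, the multiplicity-$q^{2}$ directions being those increasing the distance to $\xi$. Hence the projected walk is spatially homogeneous off $\partial\mathbb{N}^{2}$ with drift $\frac{q^{2}-1}{2(q^{2}+q+1)}(1,1)$ and a reflecting boundary which, by the positive drift into the sector, is visited with probability $e^{-\Omega(t)}$; a martingale / local-CLT argument then gives the concentration
\[
x_{1}(t)+x_{2}(t)=\tfrac{q^{2}-1}{q^{2}+q+1}\,t\pm O(\sqrt{t})
\]
with Gaussian tails. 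Finally $\col(\text{walker})=x_{1}+2x_{2}\bmod3$ performs a $\pm1$ walk on $\mathbb{Z}/3$ (each colour-class of neighbours being hit with probability $\tfrac12$), so it equidistributes in $O(1)$ steps; since $\col$-functions span $L_{\col}^{2}(X)$, this shows the trivial part of $\mu_{t}^{X}$ is $e^{-\Omega(t)}$-close to $\pi_{X}$ throughout, and reduces everything to the nontrivial part.

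For (ii): for interior $\alpha=(x_{1},x_{2})$ a lattice count (Macdonald's formula) gives $|Ka_{\alpha}\xi|=(q^{2})^{x_{1}+x_{2}}(1+o_{q}(1))$, and $Ka_{\alpha}\xi$ consists of vertices of a single colour $c=x_{1}+2x_{2}\bmod3$, so $\varphi_{*}\nu_{\alpha}$ is a probability measure on the colour-$c$ vertices of $X$ whose trivial projection is the uniform measure $\pi_{X}^{(c)}$ on that class. The lower bound is now immediate: if $x_{1}+x_{2}<\log_{q^{2}}n$ then $\varphi_{*}\nu_{\alpha}$ is supported on fewer than $n$ vertices. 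For the upper bound I realise $\varphi_{*}\nu_{\alpha}$ as the terminal-vertex law of a \emph{concatenation of collision-free walks}: starting from $v_{0}$, run the collision-free walk on type-one $1$-cells of $\BT$ (\cite[\S5.1]{Lubetzky2017RandomWalks}) for $x_{1}$ steps -- advancing $x_{1}$ units along the first axis of $\mathcal{S}$ -- then transfer through the edges$\to$triangles incidence operator and run the collision-free walk on type-one $2$-cells for $x_{2}$ steps -- advancing $x_{2}$ units along the second axis -- finally projecting each triangle to its terminal vertex. Collision-freeness forces distinct paths to have distinct endpoints, so this endpoint set is $Ka_{\alpha}\xi$ up to $o(1)$ boundary corrections. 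On the Ramanujan complex $X$ these walks descend, by Theorem \ref{thm:Col-Free-Ram}, to $3$-normal and $6$-normal Ramanujan digraphs (here $r=d!/(d-j)!$), so by Theorem \ref{thm:digraph-bound} and \eqref{eq:Ram-dig-power} each \emph{stochastic} walk contracts $L_{0}^{2}$ by $\mathrm{poly}(x_{i})\,q^{-x_{i}}$; the two incidence (transfer) operators are stochastic, of bounded norm, and carry trivial functions to trivial functions. Combining with $\Vert\cdot\Vert_{TV}\le\tfrac12\sqrt{n}\,\Vert\cdot\Vert_{L^{2}(\pi_{X})}$ yields
\[
\Vert\varphi_{*}\nu_{\alpha}-\pi_{X}^{(c)}\Vert_{TV}\le\mathrm{poly}(x_{1}+x_{2})\,\sqrt{n}\;q^{-(x_{1}+x_{2})}=\mathrm{poly}(x_{1}+x_{2})\,\sqrt{n/|Ka_{\alpha}\xi|},
\]
which is $<\varepsilon$ once $x_{1}+x_{2}>\log_{q^{2}}n+O(\log\log n)$.

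To assemble, set $T=\frac{q^{2}+q+1}{q^{2}-1}\log_{q^{2}}n$. If $t\ge T+C\sqrt{\log n}$ then by the concentration in (i) we have $x_{1}(t)+x_{2}(t)\ge\log_{q^{2}}n+O(\log\log n)$ with probability $\ge1-\varepsilon$, and since the colours $x_{1}+2x_{2}\bmod3$ are equidistributed (trivial part), $\mu_{t}^{X}=\sum_{c}\Pr[\col=c]\cdot\mathbb{E}_{\alpha}\!\left[\varphi_{*}\nu_{\alpha}\mid\col=c\right]$ is within $O(\varepsilon)$ of $\sum_{c}\tfrac13\pi_{X}^{(c)}=\pi_{X}$; hence $t_{mix}(\varepsilon,X)\le T+O(\sqrt{\log n})$. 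If $t\le T-C\sqrt{\log n}$, then with probability $\ge1-\varepsilon$ the projected walk lies in the $O((\log n)^{2})$ points with $x_{1}+x_{2}\le\log_{q^{2}}n-c\sqrt{\log n}$, whose fibres jointly cover at most $O((\log n)^{2})\cdot n\,q^{-2c\sqrt{\log n}}=o(n)$ vertices of $X$; testing $\mu_{t}^{X}$ against this set gives $\Vert\mu_{t}^{X}-\pi_{X}\Vert_{TV}\ge1-\varepsilon-o(1)$, so $t_{mix}(1-\varepsilon,X)\ge T-O(\sqrt{\log n})$. Together these establish cutoff at $T=\frac{q^{2}+q+1}{q^{2}-1}\log_{q^{2}}n$ with window $O(\sqrt{\log n})$, dominated by the diffusive fluctuations of the projected walk rather than by the $O(\log\log n)$ error from the Ramanujan bound. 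The main obstacle is step (ii): building the concatenated walk and identifying its terminal-vertex law with $\varphi_{*}\nu_{\alpha}$ requires understanding ``straight'' paths in the building that pass through cells of \emph{different} dimensions and controlling the incidence operators between $L^{2}$ of vertices, edges and triangles; the $d=3$ case is eased by $PGL_{3}(F)$ being transitive on cells of every dimension, which makes these operators and the relevant counts explicit.
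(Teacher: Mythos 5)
Your proposal follows essentially the same route as the paper's proof: a lower bound from the drifted projected walk on the sector $\mathcal{S}\cong\mathbb{N}^{2}$ together with a fiber/ball size count, and an upper bound by conditioning on the Cartan coordinate, identifying the conditional law with the uniform measure on the fiber $Ka_{\alpha}\xi$ (by $K$-invariance), realizing that measure as the endpoint law of the concatenation $T_{20}T_{2}^{2y}T_{12}T_{1}^{x}T_{01}$ of collision-free geodesic flows, and applying Theorems \ref{thm:Col-Free-Ram}--\ref{thm:digraph-bound} with $\left\Vert \cdot\right\Vert _{TV}\leq\tfrac{\sqrt{n}}{2}\left\Vert \cdot\right\Vert _{2}$, with the color part handled by the mod-$3$ walk. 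The only slips are minor: the triangle flow must be run $2x_{2}$ (not $x_{2}$) times to advance $x_{2}$ units along the second axis (which is what your stated $q^{-x_{2}}$ contraction implicitly assumes), the identification of the endpoint set with the fiber is an exact Hecke-coset computation (Proposition \ref{prop:fiber-by-walks}) rather than an approximation ``up to boundary corrections,'' and the boundary of the sector is controlled by a transience argument (Proposition \ref{prop:PGL3_transient}) rather than an exponential-tail claim.
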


\subsection{\label{subsec:A-lower-bound}A lower bound on the mixing time}

Throughout this section we fix $\varepsilon>0$. Denote by $B(\xi,r)$
the $r$-ball around $\xi$, i.e.\ the vertices of graph distance
at most $r$ from $\xi$ in $\mathcal{B}$. First, we show that the
ball of radius 
\[
r_{0}=\log_{q^{2}}n-3\log_{q^{2}}\log_{q^{2}}n
\]
can cover only a small fraction of any $n$-vertex quotient of $\mathcal{B}$:
\begin{prop}
\label{prop:r0-ball-PGL3}For $n$ large enough, $\left|B(\xi,r_{0})\right|\leq\varepsilon n$.
\end{prop}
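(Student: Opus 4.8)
The plan is to bound $|B(\xi,r_0)|$ by a direct count of vertices in the Bruhat-Tits building $\mathcal{B}$ within graph distance $r_0$ of $\xi$, using the identification of $K\backslash\mathcal{B}$ with the sector $\mathcal{S}\cong\mathbb{N}^2$ from \S\ref{subsec:Cartan-decomposition}. Every vertex of $B(\xi,r)$ lies over some point $\alpha\in\mathcal{S}$ with $\|\alpha\|_{\mathcal{S}}\le r$, where $\|\cdot\|_{\mathcal{S}}$ denotes the graph distance in $\mathcal{S}$ from the origin; and the fiber $\Phi^{-1}(\alpha)$ inside $\mathcal{B}$ is a single $K$-orbit $Ka\xi$ whose size $|Ka\xi|$ can be computed explicitly from the structure of $PGL_3(\mathcal{O})$ acting on lattices — it is a known formula growing like a fixed power of $q$ times $q^{2\|\alpha\|_{\mathcal{S}}}$ (more precisely, comparable to $q^{2d_{\mathcal{B}}(\xi,a\xi)}$, matching the growth rate of balls in the building, whose $1$-skeleton has a tree-like expansion of rate $q^2$). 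Summing over the $O(r^2)$ points $\alpha$ in the radius-$r$ ball of $\mathcal{S}$, one gets $|B(\xi,r)|\le C\cdot r^2\cdot q^{2r}$ for an explicit constant $C=C_q$ depending only on $q$.

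Granting that estimate, the proposition is immediate arithmetic: with $r_0=\log_{q^2}n-3\log_{q^2}\log_{q^2}n$ we have $q^{2r_0}=n\cdot(\log_{q^2}n)^{-3}$, so
\[
|B(\xi,r_0)|\ \le\ C\,r_0^2\,q^{2r_0}\ \le\ C\,(\log_{q^2}n)^2\cdot n\cdot(\log_{q^2}n)^{-3}\ =\ \frac{C\,n}{\log_{q^2}n}\ \le\ \varepsilon n
\]
for $n$ large enough, since $\log_{q^2}n\to\infty$. The three powers of the logarithm in $r_0$ are chosen precisely to beat the polynomial $r^2$ prefactor with room to spare. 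Note the quotient map $\varphi:\mathcal{B}\to X$ does not increase distances, so $|B(\xi,r_0)$ in $X|\le |B(\xi,r_0)$ in $\mathcal{B}|$, which is all that is needed (the statement is really about covering a fraction of $X$, and any $n$-vertex quotient receives its balls as images of balls in $\mathcal{B}$).

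The one genuine point requiring care — the main obstacle — is pinning down the growth rate of balls in $\mathcal{B}$, i.e.\ establishing $|B(\xi,r)|\le C_q\,r^2\,q^{2r}$ with the correct exponent $2$ (not $2(q^2+q+1)$-ish, which the naive branching bound would give). This is where the tree-like (Ramanujan) geometry of the building enters: although each vertex has degree $2(q^2+q+1)$, the number of vertices at distance exactly $r$ along the sector $\mathcal{S}\cong\mathbb{N}^2$ grows only polynomially in $r$ in the "sector" direction, while the fiber sizes $|Ka\xi|$ over a sector point at distance $r$ grow like $q^{2r}$ up to polynomial factors. I would extract the fiber-size formula from the explicit description of $KaK$ double cosets for $PGL_3$ (equivalently, counting sublattices of a given elementary-divisor type over $\mathcal{O}/\varpi^m\mathcal{O}$), a standard computation; once that formula is in hand the rest is the routine estimate above. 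An alternative, if one wishes to avoid the explicit lattice count, is to bound $|B(\xi,r)|$ spectrally via the return probability of the non-backtracking or simple walk on the building, but the lattice-counting route gives the cleaner constant and is the natural one here.
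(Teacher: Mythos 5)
Your argument is correct, and the finishing arithmetic (plugging $r_{0}=\log_{q^{2}}n-3\log_{q^{2}}\log_{q^{2}}n$ into a bound of the form $C_{q}\,r^{2}q^{2r}$) is exactly the paper's. The only difference is in how the ball estimate is obtained: the paper's proof of this proposition simply quotes the exact sphere cardinality $\left|S(\xi,r)\right|=(r+1)q^{2r}+2rq^{2r-1}+2rq^{2r-2}+(r-1)q^{2r-3}$ from the Evra--Parzanchevski reference and crudely bounds $|B(\xi,r)|\leq 8r^{2}q^{2r}$, whereas you re-derive the same order of growth by the Cartan decomposition: each fiber $\Phi^{-1}(x,y)=K\,\mathrm{diag}(\varpi^{x+y},\varpi^{y},1)\xi$ over a sector point at distance $r=x+y$ has size $O_{q}(q^{2r})$, and there are $O(r^{2})$ sector points at distance at most $r$. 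This is in fact precisely the route the paper itself takes for general $d$ (Proposition~\ref{prop:fiber-decomposition}, Corollary~\ref{cor:FiberSizeGeneral_d} and the bound \eqref{eq:Ball_d_case}), and Corollary~\ref{cor:FiberSizeGeneral_d} specialized to $d=3$ gives $|\Phi^{-1}(x,y)|\leq(q+1)(q^{2}+q+1)q^{2(x+y)}$, so the fiber-size step you defer to ``a standard lattice count'' is genuinely standard and available; leaving its constant $q$-dependent is harmless since $q$ is fixed. Two small remarks: the appeal to ``tree-like (Ramanujan) geometry'' is a red herring --- the base $q^{2}$ is just the volume growth of the $\widetilde{A}_{2}$ building and has nothing to do with the Ramanujan property; and the proposition is a statement about the ball in $\mathcal{B}$ versus $n=|X^{0}|$, so the final remark about $\varphi$ not increasing distances is unnecessary here (the transfer to $X$ is done later, in Proposition~\ref{prop:mix-t0}, via $\varphi(B(\xi,r))=B(v,r)$).
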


\begin{proof}
Given $r\geq1$, the $r$-sphere $S\left(\xi,r\right)$ is shown in
\cite{Evra2018RamanujancomplexesGolden} to be of size
\[
\left|S\left(\xi,r\right)\right|=(r+1)q^{2r}+2rq^{2r-1}+2rq^{2r-2}+(r-1)q^{2r-3}.
\]
Thus, one can crudely bound the size of the $r$-ball by $|B(\xi,r)|\leq8r^{2}q^{2r}$,
hence 
\[
|B(\xi,r_{0})|\leq8r_{0}^{2}q^{2r_{0}}\leq\frac{8(\log_{q^{2}}n)^{2}}{(\log_{q^{2}}n)^{3}}n\leq\varepsilon n
\]
for $n$ large enough.
\end{proof}
Let $(\X_{t})$ be a SRW on $\BT$ starting at $\xi$. We would like
to determine until when does the walk remains in the $r_{0}$-ball
around $\xi$ with high probability. Since the distance from $\xi$
is $K$-invariant, we have $\dist\left(\zeta,\xi\right)=\dist(\Phi\left(\zeta\right),\Phi\left(\xi\right))=\dist(\Phi\left(\zeta\right),(0,0))$
for $\zeta\in\mathcal{B}^{0}$, which leads us to consider the projection
of $\mathcal{X}_{t}$ by $\Phi$. In this manner, we obtain a (non-simple)
random walk $(\Phi(\mathcal{X}_{t}))$ on $\mathbb{N}^{2}$ , and
we define
\[
\rho\left(t\right)=\dist\left(\Phi\left(\mathcal{X}_{t}\right),(0,0)\right)=\dist\left(\mathcal{X}_{t},\xi\right).
\]
Recall that we identified $\mathcal{S}\cong\mathbb{N}^{2}$ by mapping
$\mathrm{diag}\left(\varpi^{\alpha},\varpi^{\beta},1\right)\xi$ to
$\left(\alpha-\beta,\beta\right)$, and the edges in $\mathbb{N}^{2}$
(except at the boundary) are $\pm(1,0),\pm(0,1),\pm(1,-1)$ - see
Figure~\ref{fig:The-folded-apartment}.

\begin{figure}[h]
\begin{centering}
\includegraphics[scale=0.6]{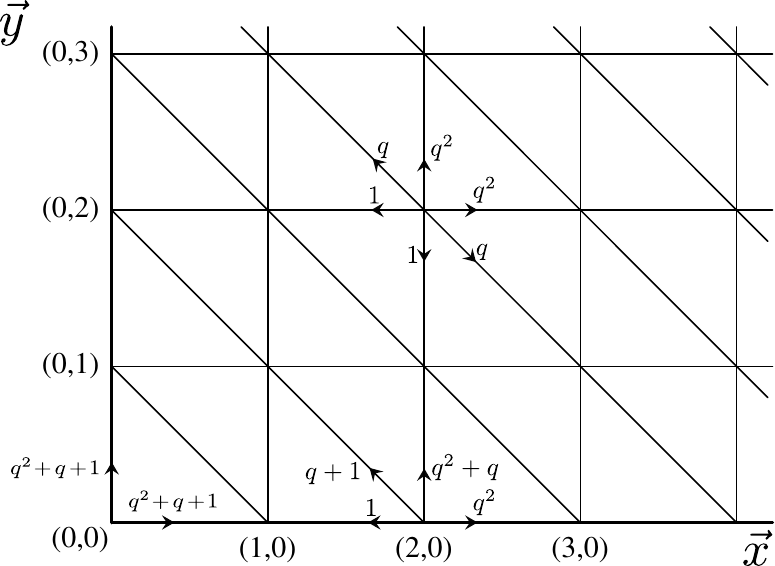}
\par\end{centering}
\caption{\label{fig:The-folded-apartment}The sector $\mathcal{S}\subseteq\mathcal{B}$
as $\mathbb{N}^{2}$, and transition probabilities projected from
SRW on $\mathcal{B}$, scaled by $2\left(q^{2}+q+1\right)$.}
\end{figure}

Let $\vec{x}$ and $\vec{y}$ be the boundary lines of $\mathcal{S}$
(the $x$ and $y$ axes in Figure \ref{fig:The-folded-apartment}).
The transition probabilities of the projected random walk are as follows:
from $\left(0,0\right)$ there is a probability of $\frac{1}{2}$
of moving to $\left(1,0\right)$ and to $\left(0,1\right)$. Outside
the boundary, the edge $\left(\Delta_{x},\Delta_{y}\right)$ is taken
with probability $q^{\Delta_{x}+\Delta_{y}+1}/k$. On $\vec{x}\backslash(0,0)$,
the edges with $\Delta_{y}=-1$ are folded back in, giving the probabilities
shown in Figure \ref{fig:The-folded-apartment}, and on $\vec{y}$
the folding is symmetric.

Denote $y(t)=\dist(\Phi\left(\X_{t}\right),\vec{x})$ and $x(t)=\dist(\Phi\left(\X_{t}\right),\vec{y})$,
which measure the distance of the projected walk from the boundary.
Clearly, $\rho(t)=y(t)+x(t)$. We consider $y(t)$, $x(t)$ and $\rho\left(t\right)$
as random walks on $\mathbb{N}$ starting at zero.
\begin{prop}
\label{prop:PGL3_transient}The walks $x(t)$ and $y(t)$ are transient.
\end{prop}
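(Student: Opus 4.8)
The plan is to show that the one-dimensional walk $y(t)$ (the distance of $\Phi(\X_t)$ from the $x$-axis) has strictly positive drift away from the origin, uniformly outside a finite neighbourhood of $0$, and then invoke a standard comparison/Lyapunov argument to conclude transience; the case of $x(t)$ is identical by the symmetry of the folding. First I would read off from Figure~\ref{fig:The-folded-apartment} the transition probabilities of $\Phi(\X_t)$ in the interior of $\mathcal S$: the step $\gamma$ with coordinate increments $(\Delta_x,\Delta_y)$ is taken with probability $q^{\Delta_x+\Delta_y+1}/k$, where $k=2(q^2+q+1)$. Grouping the six interior steps $\pm(1,0),\pm(0,1),\pm(1,-1)$ by their effect on the second sector-coordinate $\beta$ (equivalently on $y$), I would compute $\mathbb E[\,y(t+1)-y(t)\mid \Phi(\X_t)=\alpha\,]$ at an interior point: the steps increasing $y$ are $(0,1)$ and $(1,-1)^{-1}$-type contributions... more carefully, the steps are $(1,0),(-1,0),(0,1),(0,-1),(1,-1),(-1,1)$ with weights $q^2,q^0\!\cdot q,\dots$ — I would just tabulate $q^{\Delta_x+\Delta_y+1}$ for each and sum $\Delta_y\cdot(\text{weight})$, obtaining a positive number of order $q^3/k$, i.e. a drift bounded below by some $\delta=\delta(q)>0$ once $y\geq 2$ (say), so that reordering/folding does not interfere.

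The main point is then the boundary behaviour. On the line $\vec x\setminus\{(0,0)\}$ the steps with $\Delta_y=-1$ are folded back in, so locally $y(t)$ behaves like a reflected version of the interior walk; since folding only ever \emph{adds} mass to moves that increase the distance from $\vec x$ (or keeps $y$ fixed), the conditional drift of $y(t)$ at $y=1$ is at least as large as in the interior, hence still bounded below by $\delta$. The only genuinely different state is the origin, where from $(0,0)$ one moves to $(1,0)$ or $(0,1)$ each with probability $\tfrac12$; this is irrelevant for transience of $y(t)$ as a walk on $\mathbb N$, since we only need a uniform positive drift for $y\geq 1$ together with the obvious fact that from $0$ the walk enters $\{y\geq 1\}$ with probability $\tfrac12$ at each visit. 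I would phrase the conclusion via the classical criterion (e.g. \cite[Thm.~2.5.7]{Durrett2019Probability} or a direct supermartingale argument): a Markov chain on $\mathbb N$ with $\mathbb E[y(t+1)-y(t)\mid y(t)=j]\geq\delta>0$ for all $j\geq 1$ and with bounded increments ($|y(t+1)-y(t)|\leq 1$) is transient, because $M_t=\theta^{y(t)}$ is a bounded supermartingale for a suitable $\theta<1$, forcing $y(t)\to\infty$ a.s.

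The hard part will not be the transience criterion itself but making the boundary bookkeeping airtight: one must check that after folding (reorder the entries of $\varpi^{\alpha+\gamma}$ in descending order and normalise) the resulting step on $\mathbb N^2$ really does have the claimed effect on $y(t)=\dist(\Phi(\X_t),\vec x)$, and that no folded step can \emph{decrease} $y$ by more than the interior walk does. I would handle this by listing, for a point $\alpha=(a,0)$ on $\vec x$ with $a\geq 1$, each of the six $\gamma$'s, computing $\varpi^{\alpha+\gamma}$, reordering, and identifying the target in $\mathbb N^2$ — a short finite case check — and then observing that the sign pattern of $\Delta_y$ is preserved or improved. Once the uniform lower bound $\delta$ on the drift is established for all $y\geq 1$, transience of both $y(t)$ and $x(t)$ follows immediately, and the proof is complete.
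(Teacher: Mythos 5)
There is a genuine gap: you only analyze the folding at the boundary $\vec{x}$ (and at the origin), but the drift of $y(t)$ is also degraded by the folding at the \emph{other} boundary $\vec{y}$, i.e.\ at points with $x(t)=0$ and $y(t)$ arbitrarily large, and these states are exactly where your claimed uniform bound fails. Writing the interior steps as $\Delta=(\gamma_1-\gamma_2,\gamma_2-\gamma_3)$ with weight $q^{Z_\gamma}/k$, the interior drift of $y$ is $(q^2-1)/k>0$; but at a point $(0,\beta)$ with $\beta\geq1$, the step with interior effect $(-1,+1)$ (weight $q/k$) folds to $(+1,0)$ and the step with interior effect $(-1,0)$ (weight $1/k$) folds to $(+1,-1)$, so the conditional law of $y(t)-y(t-1)$ becomes $+1,0,-1$ with probabilities $q^2/k$, $(q^2+q)/k$, $(q+2)/k$ and the drift drops to $(q^2-q-2)/k=(q-2)(q+1)/k$. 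Thus folding can strictly \emph{decrease} $y$'s drift, contrary to your claim, and for $q=2$ (a perfectly admissible residue field size) the drift at these states is exactly $0$. Your supermartingale $\theta^{y(t)}$ then fails to be a supermartingale there: with $\mathbb{P}[+1]=\mathbb{P}[-1]$ one needs $\theta\geq1$, so no $\theta<1$ works, and the transience conclusion does not follow for $q=2$. (A related point: $y(t)$ is not a Markov chain on $\mathbb{N}$ --- its increments depend on $x(t)$ --- so the drift bound must be uniform over the whole position, which is precisely why the $x=0$ states cannot be ignored.)

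For $q\geq3$ your scheme does work once this missing case is added, and it then essentially coincides with the paper's argument, which bounds $\mathbb{E}[y(t)-y(t-1)]$ below by its worst case, attained at $x(t-1)=0$, and compares with the biased walk on $\mathbb{Z}$ having probabilities $q^2/k$, $(q^2+q)/k$, $(q+2)/k$, transient exactly when $q>2$. The real content you are missing is the $q=2$ case: the paper handles it by a two-step estimate, using that from $x=0$ the walk leaves the boundary with probability $(q+1)^2/k$ and then enjoys the strictly positive interior drift, yielding $\mathbb{E}[y(t)-y(t-2)]\geq\bigl(4q^4+q^3-5q^2-9q-7\bigr)/\bigl(4(q^2+q+1)^2\bigr)>0$ for all $q\geq2$. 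Your proposal has no substitute for this step, so as written it proves the proposition only for $q\geq3$.
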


\begin{proof}
We treat only $x\left(t\right)$, and the proof for $y\left(t\right)$
is analogous. Although the distribution of the random variable $\partial x(t)=x(t)-x(t-1)$
depends on the position of the walk at time $t-1$, there are only
four cases to consider: when the walk is at the origin, when the walk
is on $\vec{x}$ or $\vec{y}$, and when both $x(t-1)$ and $y(t-1)$
are positive. In all of these cases, 
\[
\mathbb{E}[\partial x(t)]\geq\frac{q^{2}-q-2}{2(q^{2}+q+1)}.
\]
Thus, for $q>2$, the value of $x(t)$ is expected to strictly grow
at each step and thus $x(t)$ is transient. To cover the case of $q=2$,
one can look ``two steps ahead'', namely on $\partial^{2}x(t)=x(t)-x(t-2)$.
There are more cases to check, but explicit computation shows that
\[
\mathbb{E}\left[\partial^{2}x(t)\right]\geq\tfrac{4q^{4}+q^{3}-5q^{2}-9q-7}{4(q^{2}+q+1)^{2}},
\]
which is positive for all $q\geq2$, giving again transience.
\end{proof}
Define $S(t)=\sum_{i=1}^{t}Y_{i}$, where $Y_{i}=\rho(i)-\rho(i-1)$
whenever $x(i-1),y(i-1)>0$, and otherwise $Y_{i}$ is a random variable
independent of any other, attaining $1,0,-1$ with respective probabilities
$2q^{2}/k$, $2q/k$, $2/k$. It follows that the $Y_{i}$'s are i.i.d.,
and by the central limit theorem, 
\begin{equation}
\widetilde{S}(t)=\frac{S(t)-\mathscr{E}t}{\sigma\sqrt{t}}\Rightarrow\mathcal{N}(0,1),\text{ where }\left\{ \ensuremath{\begin{alignedat}{1}\mathscr{E} & =\tfrac{q^{2}-1}{q^{2}+q+1}\\
\sigma & =\tfrac{\sqrt{q^{3}+4q^{2}+q}}{q^{2}+q+1}
\end{alignedat}
}\right.\label{eq:E-s-def}
\end{equation}
and $\mathcal{N}(0,1)$ is the standard normal distribution. Let $\mathcal{E}(t)=\frac{\rho(t)-S(t)}{\sigma\sqrt{t}}$.
Since $x(i)$ and $y(i)$ are transient, the difference $\rho(t)-S(t)$
is bounded with probability $1$, so that $\mathbb{P}\left[|\mathcal{E}(t)|<C\right]\xrightarrow{t\to\infty}1$
for every $C>0$. Hence $\mathcal{E}\left(t\right)$ converges to
the Dirac measure concentrated at $0$, and
\begin{equation}
\Xi(t)=\frac{\rho(t)-\mathscr{E}t}{\sigma\sqrt{t}}=\widetilde{S}(t)+\mathcal{E}(t)\Rightarrow\mathcal{N}(0,1).\label{eq:Xi-dist}
\end{equation}
Recall that $\varepsilon>0$ and $r_{0}$ were fixed at the beginning
of section \ref{subsec:A-lower-bound}.
\begin{prop}
\label{prop:t0-to-r0}For $n$ large enough and any $s\geq0$, at
time 
\begin{equation}
t_{0}=t_{0}\left(s\right)=\frac{q^{2}+q+1}{q^{2}-1}\log_{q^{2}}n-(s+1)\sqrt{\log_{q^{2}}n}\label{eq:TTT}
\end{equation}
the distance of $\mathcal{X}_{t}$ from $\xi$ satisfies
\[
\mathbb{P}\left[\rho(t_{0})>r_{0}\right]<\mathbb{P}\left[Z>c_{q}\cdot s\right]+\varepsilon,
\]
where $Z\sim\mathcal{N}(0,1)$ and $c_{q}=\mathscr{E}^{3/2}/\sigma=\sqrt{\frac{(q^{2}-1)^{3}}{(q^{2}+q+1)(q^{3}+4q^{2}+q)}}$
(see \eqref{eq:E-s-def}).
\end{prop}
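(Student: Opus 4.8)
The plan is to rewrite the event $\{\rho(t_0)>r_0\}$ in terms of the normalized variable $\Xi(t_0)$ of \eqref{eq:Xi-dist} and then pass to the limit $n\to\infty$. Write $L=\log_{q^2}n$. By the definition of $\Xi(t)$, the event $\{\rho(t_0)>r_0\}$ is the same as $\{\Xi(t_0)>a_n\}$, where $a_n:=\frac{r_0-\mathscr{E}t_0}{\sigma\sqrt{t_0}}$. Thus everything reduces to (i) computing $\lim_{n}a_n$, and (ii) feeding the answer into \eqref{eq:Xi-dist}.

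For step (i) I would first observe that $\mathscr{E}=\frac{q^2-1}{q^2+q+1}$ is exactly the reciprocal of the leading coefficient $\frac{q^2+q+1}{q^2-1}$ appearing in \eqref{eq:TTT}, so that $\mathscr{E}t_0=L-\mathscr{E}(s+1)\sqrt L$; together with $r_0=L-3\log_{q^2}L$ this gives $r_0-\mathscr{E}t_0=\mathscr{E}(s+1)\sqrt L-3\log_{q^2}L$. Since also $t_0=\tfrac{L}{\mathscr{E}}(1-o(1))$, one has $\sigma\sqrt{t_0}=\tfrac{\sigma}{\sqrt{\mathscr{E}}}\sqrt L\,(1-o(1))$; dividing and using $\tfrac{\log_{q^2}L}{\sqrt L}\to0$ yields $a_n\xrightarrow{n\to\infty}\tfrac{\mathscr{E}^{3/2}}{\sigma}(s+1)=c_q(s+1)$. (If one is bothered that $t_0$ need not be an integer, replace it by $\lceil t_0\rceil$ everywhere; nothing above changes.)

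For step (ii): since $c_q>0$ we have $c_q(s+1)>c_qs$, hence for all $n$ large enough $a_n>c_qs$, and then monotonicity of probability gives $\mathbb{P}[\rho(t_0)>r_0]=\mathbb{P}[\Xi(t_0)>a_n]\le\mathbb{P}[\Xi(t_0)>c_qs]$. Because $c_qs$ is a continuity point of the standard normal distribution function, \eqref{eq:Xi-dist} gives $\mathbb{P}[\Xi(t_0)>c_qs]\to\mathbb{P}[Z>c_qs]$ as $n\to\infty$, which is $<\mathbb{P}[Z>c_qs]+\varepsilon$ once $n$ is large. Combining these bounds completes the argument.

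I do not expect a serious obstacle here; the one point requiring a little care is that one cannot apply the weak convergence in \eqref{eq:Xi-dist} \emph{directly} at the moving threshold $a_n$. The positive gap $c_q(s+1)-c_qs=c_q$ between $\lim_n a_n$ and the fixed level $c_qs$ is precisely what lets us step down to the $n$-independent continuity point $c_qs$ via monotonicity, and the only genuinely computational input is the cancellation of the leading $L$-terms of $r_0$ and $\mathscr{E}t_0$, which is forced by $\mathscr{E}^{-1}=\tfrac{q^2+q+1}{q^2-1}$; the value of $c_q$ then falls out of the elementary expansion.
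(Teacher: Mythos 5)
Your proposal is correct and follows essentially the same route as the paper: rewrite $\{\rho(t_0)>r_0\}$ as $\{\Xi(t_0)>a_n\}$, use the slack between $(s+1)$ and $s$ (which absorbs the $3\log_{q^2}\log_{q^2}n$ term and the discrepancy between $\sqrt{t_0}$ and $\sqrt{\log_{q^2}n\,/\mathscr{E}}$) to step down to the fixed threshold $c_q s$, and then apply the weak convergence \eqref{eq:Xi-dist} there. The paper carries this out via an explicit chain of inequalities rather than by computing $\lim_n a_n=c_q(s+1)$, but the content is the same, and your remark about not applying \eqref{eq:Xi-dist} at the moving threshold is exactly the care the paper's chain implicitly takes.
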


\begin{proof}
We note that $\rho(t_{0})>r_{0}$ is equivalent to 
\[
\Xi(t_{0})>\frac{r_{0}-\mathscr{E}t_{0}}{\sigma\sqrt{t_{0}}}=\frac{(s+1)\mathscr{E}\sqrt{\log_{q^{2}}n}-3\log_{q^{2}}\log_{q^{2}}n}{\sigma\sqrt{t_{0}}}.
\]
For $n$ large enough we have $\mathscr{E}\sqrt{\log_{q^{2}}n}\geq3\log_{q^{2}}\log_{q^{2}}n$,
and thus
\[
\frac{(s+1)\mathscr{E}\sqrt{\log_{q^{2}}n}-3\log_{q^{2}}\log_{q^{2}}n}{\sigma\sqrt{t_{0}}}>\frac{s\mathscr{E}\sqrt{\log_{q^{2}}n}}{\sigma\sqrt{t_{0}}},
\]
and from $\sqrt{t_{0}}<\mathscr{E}^{-1/2}\sqrt{\log_{q^{2}}n}$ follows
that
\[
\frac{s\mathscr{E}\sqrt{\log_{q^{2}}n}}{\sigma\sqrt{t_{0}}}>\frac{s\mathscr{E}^{3/2}}{\sigma}=c_{q}s.
\]
Lastly, since $\Xi\left(t_{0}\right)$ converges to $Z$ in distribution
and $t_{0}\overset{{\scriptscriptstyle n\rightarrow\infty}}{\longrightarrow}\infty$,
for $n$ large enough we have
\[
\left|\mathbb{P}\left[\Xi(t_{0})>c_{q}s\right]-\mathbb{P}\left[Z>c_{q}s\right]\right|<\eps.
\]
All in all we conclude that
\[
\mathbb{P}\left[\rho(t_{0})>r_{0}\right]\leq\mathbb{P}\left[\Xi(t_{0})>c_{q}s\right]\leq\mathbb{P}\left[Z>c_{q}s\right]+\varepsilon.\qedhere
\]
\end{proof}
Now let $X$ be a quotient of $\mathcal{B}$ with $n$ vertices. For
any $v\in X^{0}$ we can choose the covering map $\varphi\colon\mathcal{B}\to X$
to satisfy $\varphi\left(\xi\right)=v$. This map induces a correspondence
between paths in $X$ starting at $v$ and paths in $\BT$ starting
at $\xi$, and in particular, $\varphi\left(B\left(\xi,r\right)\right)=B\left(v,r\right)$.
The projection $X_{t}=\varphi\left(\mathcal{X}_{t}\right)$ is a SRW
on (the $1$-skeleton of) $X$ starting at $v$. We recall that $\mu_{X}^{t}=\mu_{X,v}^{t}$
denotes the distribution of $(X_{t})$ and $\pi_{X}$ the uniform
distribution on $X^{0}$.
\begin{prop}
\label{prop:mix-t0}There exists $s=s\left(q,\varepsilon\right)$
such that for $n$ large enough, the $\left(1\!-\!3\varepsilon\right)$-mixing
time of SRW on $X$ is at least $t_{0}=t_{0}\left(s\right)$.
\end{prop}

\begin{proof}
Using $\varphi\left(B\left(\xi,r\right)\right)=B\left(v,r\right)$,
which implies in particular $\mu_{X,v}^{t}\left(B\left(v,r\right)\right)\geq\mu_{\mathcal{B},\xi}^{t}\left(B\left(\xi,r\right)\right)$,
together with Prop.\ \ref{prop:t0-to-r0} and Prop.\ \ref{prop:r0-ball-PGL3},
we have for $n$ large enough
\begin{align*}
\big\Vert\mu_{X,v}^{t_{0}}-\pi_{X}\big\Vert_{TV} & \geq\pi_{X}\left(X^{0}\backslash B(v,r_{0})\right)-\mu_{X,v}^{t_{0}}\left(X^{0}\backslash B(v,r_{0})\right)\\
 & \geq\tfrac{n-\left|B(v,r_{0})\right|}{n}-\mu_{\mathcal{B},\xi}^{t_{0}}\left(\mathcal{B}^{0}\backslash B(\xi,r_{0})\right)\\
 & \geq\tfrac{n-\left|B(\xi,r_{0})\right|}{n}-\mathbb{P}\left[Z>c_{q}s\right]-\varepsilon\\
 & \geq1-2\varepsilon-\mathbb{P}\left[Z>c_{q}s\right].
\end{align*}
This implies in particular $\max_{v\in X^{0}}||\mu_{X,v}^{t_{0}}-\pi_{X}||_{TV}\geq1-2\varepsilon-\mathbb{P}\left[Z>c_{q}s\right]$,
and we can choose $s$ such that $\mathbb{P}[Z>c_{q}s]<\varepsilon$,
and thus $t_{mix}(1-3\eps)>t_{0}$.
\end{proof}

\subsection{An upper bound for the mixing time}

Recall from \eqref{eq:color} that $\mathcal{B}$ is tri-partite via
$\col:\mathcal{B}^{0}\rightarrow\mathbb{Z}/3\mathbb{Z}$. The quotient
$X=\Gamma\backslash\mathcal{B}$ is tripartite if and only if the
map $\col$ factors through $X^{0}$, which is equivalent to $\mathrm{ord}_{\varpi}\det\gamma\in3\mathbb{Z}$
for all $\gamma\in\Gamma$. When this is the case, the trivial functions
$L_{\col}^{2}(X^{0})$ (see §\ref{subsec:Ramanujan-complexes}) are
those which are constant on each color, and when $X$ is not tri-partite,
$L_{\col}^{2}(X^{0})$ are the constant functions. Denote by $\mathcal{P}_{\!\col}$
and $\mathcal{P}_{0}$ the orthogonal projections corresponding to
the decomposition $L^{2}(X^{0})=L_{\col}^{2}(X^{0})\oplus L_{0}^{2}(X^{0})$.
For any $t$, we have
\begin{equation}
\left\Vert \mu_{X}^{t}-\pi_{X}\right\Vert _{TV}\leq\left\Vert \mathcal{P}_{0}(\mu_{X}^{t})\right\Vert _{TV}+\left\Vert \mathcal{P}_{\!\col}(\mu_{X}^{t})-\pi_{X}\right\Vert _{TV}.\label{eq:P0_Pcol}
\end{equation}
We first bound the second term:
\begin{prop}
\label{prop:triangle-mix}There exist $t_{\triangle}=t_{\triangle}(\varepsilon)$
such that $\left\Vert \mathcal{P}_{\!\col}(\mu_{X}^{t})-\pi_{X}\right\Vert _{TV}\leq\varepsilon$
for any $t\geq t_{\triangle}$.
\end{prop}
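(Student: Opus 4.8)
The plan is to exploit the fact that $\mathcal{P}_{\!\col}$ only records the \emph{colour} of the walker, and that the colour coordinate $\col(X_t)\in\mathbb{Z}/3\mathbb{Z}$ performs, on its own, a simple random walk on the triangle $\mathbb{Z}/3\mathbb{Z}$ whose equilibration rate is the fixed constant $\tfrac12$, independent of $n$. Hence this ``triangle part'' of the distance decays geometrically and is already negligible for $t$ a constant depending only on $\eps$.

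First I would dispose of the degenerate case. If $X$ is not tripartite, then $L^2_\col(X^0)$ is the space of constant functions, so $\mathcal{P}_{\!\col}(\mu_X^t)$ is the orthogonal projection of the probability measure $\mu_X^t$ onto the constants, which is $\pi_X$ itself; thus $\|\mathcal{P}_{\!\col}(\mu_X^t)-\pi_X\|_{TV}=0$ and $t_\triangle=0$ works. So from now on $X$ is tripartite; write $X^0=V_0\sqcup V_1\sqcup V_2$ for its colour classes, so that $L^2_\col(X^0)$ is spanned by the orthogonal indicators $\one_{V_0},\one_{V_1},\one_{V_2}$.

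Next I would record the structural input. Since the link of every vertex of $\mathcal{B}$ is the spherical building of $PGL_3(\mathbb{F}_q)$ --- a bipartite $(q+1)$-regular graph with $q^2+q+1$ vertices on each side --- every vertex of $X$ of colour $c$ has exactly $q^2+q+1$ neighbours of colour $c+1$ and $q^2+q+1$ of colour $c-1$. Consequently the adjacency operator $A$ of the $1$-skeleton of $X$ maps $L^2_\col(X^0)$ into itself, acting there (in the basis $\one_{V_0},\one_{V_1},\one_{V_2}$) as $(q^2+q+1)(C+C^{-1})$, where $C$ cyclically shifts the three colours; hence its spectrum on $L^2_\col(X^0)$ is $k=2(q^2+q+1)$ on the line of constants $\mathbb{R}\one$, and $-k/2$ on $L^2_\col(X^0)\cap\one^{\perp}$. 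Being self-adjoint, $A$ also preserves $L^2_0(X^0)=(L^2_\col(X^0))^{\perp}$, so the SRW transition operator $P=\tfrac1k A$ commutes with $\mathcal{P}_{\!\col}$ and acts as multiplication by $-\tfrac12$ on $L^2_\col(X^0)\cap\one^{\perp}$.

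Finally I would conclude. For $v$ of colour $c$, $\mathcal{P}_{\!\col}(\delta_v)$ is the uniform probability measure on $V_c$, so $\mathcal{P}_{\!\col}(\delta_v)-\pi_X$ lies in $L^2_\col(X^0)\cap\one^{\perp}$; using $\mathcal{P}_{\!\col}(\mu_X^t)=\mathcal{P}_{\!\col}(P^t\delta_v)=P^t\mathcal{P}_{\!\col}(\delta_v)$ together with $P\pi_X=\pi_X$ gives
\[
\mathcal{P}_{\!\col}(\mu_X^t)-\pi_X \;=\; P^t\bigl(\mathcal{P}_{\!\col}(\delta_v)-\pi_X\bigr)\;=\;\bigl(-\tfrac12\bigr)^{t}\bigl(\mathcal{P}_{\!\col}(\delta_v)-\pi_X\bigr),
\]
so that $\|\mathcal{P}_{\!\col}(\mu_X^t)-\pi_X\|_{TV}\le 2^{-t}\,\|\mathcal{P}_{\!\col}(\delta_v)-\pi_X\|_{TV}\le 2^{-t}$, using that the total-variation distance between two probability measures is at most $1$. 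Thus $t_\triangle=\lceil\log_2(1/\eps)\rceil$ (independent of $v$ and of $n$) does the job. I do not expect a genuine obstacle: the whole point is that the colour coordinate decouples from the rest of the walk and mixes at the $n$-independent rate $\tfrac12$, so its contribution is negligible next to the cutoff time $\asymp\log n$ and the $O(\sqrt{\log n})$ window; the only things requiring a little care are the non-tripartite case and the verification that $A$ restricts on $L^2_\col(X^0)$ to a scalar multiple of the adjacency operator of the triangle.
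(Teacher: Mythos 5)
Your proof is correct and follows essentially the same route as the paper: both reduce the claim to the observation that the colour coordinate of the walk is itself a simple random walk on the triangle $\mathbb{Z}/3\mathbb{Z}$ (using that each vertex has $q^{2}+q+1$ neighbours of each of the other two colours), which mixes in time depending only on $\varepsilon$, with the non-tripartite case handled trivially. The only difference is presentational: the paper cites connectedness and non-bipartiteness of the triangle to get an $n$-independent $t_{\triangle}$, while you compute the relevant eigenvalue $-\tfrac12$ explicitly and obtain the quantitative bound $t_{\triangle}=\lceil\log_{2}(1/\varepsilon)\rceil$.
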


\begin{proof}
If $X$ is non-tripartite then $\mathcal{P}_{\!\col}(\mu_{X}^{t})=\pi_{X}$,
as both are constant functions of sum one. If $X$ is tripartite,
$\col$ induces a simplicial map $\col:X\rightarrow\triangle$, where
$\triangle$ is the 2-simplex with vertices $\mathbb{Z}/3\mathbb{Z}$.
In this case, $\mathcal{P}_{\!\col}(\mu_{X}^{t})$ is the pullback
of SRW on the 2-simplex starting at $0=\col\left(v\right)$, i.e.,
$\mathcal{P}_{\!\col}(\mu_{X}^{t})\left(w\right)=\frac{3}{n}\cdot\mu_{\triangle}^{t}\left(\col\left(w\right)\right)$.
The triangle is connected and non-bipartite, so there exists a time
$t_{\triangle}$, not depending on $n$, such that $\Vert\mu_{\triangle}^{t}-\pi_{\Delta}\Vert_{TV}<\varepsilon$
for $t>t_{\triangle}$, hence $\left\Vert \smash{\mathcal{P}_{\!\col}(\mu_{X}^{t})}-\pi_{X}\right\Vert _{TV}=\Vert\mu_{\Delta}^{t_{1}}-\pi_{\Delta}\Vert_{TV}\leq\varepsilon$.
\end{proof}
Next, we define 
\begin{align*}
r_{1} & =\log_{q^{2}}n+16\log_{q^{2}}\log_{q^{2}}n\\
t_{1} & =\frac{q^{2}+q+1}{q^{2}-1}\log_{q^{2}}n+(s+1)\sqrt{\log_{q^{2}}n},
\end{align*}
where $s$ is as in Prop.\ \ref{prop:mix-t0}. Observe that by time
$t_{1}$ SRW on $\mathcal{B}$ leaves $B(\xi,r_{1})$ with high probability:
the same arguments as in Props.\ \ref{prop:t0-to-r0} and \ref{prop:mix-t0}
give for $n$ large enough 
\begin{equation}
\mathbb{P}\left[\rho(t_{1})<r_{1}\right]\leq\mathbb{P}[Z>c_{q}s]+\varepsilon<2\varepsilon.\label{eq:rho_at_time_t1}
\end{equation}
It is left to bound $\left\Vert \mathcal{P}_{0}(\mu_{X}^{t_{1}})\right\Vert _{TV}$,
and for this we use for the first time the assumption that $X$ is
a Ramanujan complex. We decompose $\mu_{X}^{t_{1}}$ by conditioning
on the values of $\rho,x,y$ at time $t_{1}$: denoting $\mu_{X}^{t,x,y}=\mathbb{P}\big[X_{t}=\cdot\,\big|\,\begin{smallmatrix}x(t)=x,\\
y(t)=y\phantom{,}
\end{smallmatrix}\big]$, we have
\begin{align}
\left\Vert \mathcal{P}_{0}(\mu_{X}^{t_{1}})\right\Vert _{TV} & =\Big\Vert\mathbb{P}\left[\rho(t_{1})\!<\!r_{1}\right]\mathcal{P}_{0}\left(\mathbb{P}[X_{t}\negmedspace=\cdot\,\middle|\,\rho(t_{1})\!<\!r_{1}]\right)\nonumber \\
 & \hphantom{=\Big\Vert}+\sum\nolimits _{r_{1}\leq x+y}\mathbb{P}\big[\begin{smallmatrix}x(t_{1})=x\\
y(t_{1})=y
\end{smallmatrix}\big]\mathcal{P}_{0}(\mu_{X}^{t_{1},x,y})\Big\Vert_{TV}\label{eq:P0_bound}\\
 & \leq2\varepsilon+\max_{r_{1}\leq x+y\leq t_{1}}\big\Vert\mathcal{P}_{0}(\mu_{X}^{t_{1},x,y})\big\Vert_{TV},\nonumber 
\end{align}
using $x\left(t\right)+y\left(t\right)\leq t_{1}$ and \eqref{eq:rho_at_time_t1}.
To understand the $L_{0}^{2}$-projection of the conditional distribution
$\mu_{X}^{t_{1},x,y}$, we turn to study the fiber $\Phi^{-1}\left(x,y\right)$,
using carefully chosen geometric operators on the cells of $\mathcal{B}$.

Recall the definition of cells of type one from §\ref{subsec:Bruhat-Tits-buildings}.
While $g\in G$ does not preserve colors in $\mathcal{B}^{0}$ in
general, it does preserve the difference between colors, so that the
cells of type one in $X$ are well defined (namely, $X_{1}^{j}=\Gamma\backslash\mathcal{B}_{1}^{j}$).
Let $T_{1}$ and $T_{2}$ be the geodesic edge-flow and triangle-flow
operators from \cite[§5.1]{Lubetzky2017RandomWalks}: the operator
$T_{1}$ acts on $\mathcal{B}_{1}^{1}$, taking a (directed) edge
$vw$ to all edges $wu$ of type one such that $vwu$ is not a triangle
in $\mathcal{B}$. The operator $T_{2}$ acts on $\mathcal{B}_{1}^{2}$,
taking the (ordered) triangle $vwu$ to all triangles $wuy$ with
$y\neq v$. We introduce the operators:
\begin{align*}
T_{01} & :\mathcal{B}^{0}\rightarrow\mathcal{B}_{1}^{1} & T_{01}\left(v\right) & =\left\{ wv\,\middle|\,w\in\mathcal{B}^{0}\quad\text{(and \ensuremath{wv} is of type one)}\right\} \\
T_{12} & :\mathcal{B}_{1}^{1}\rightarrow\mathcal{B}_{1}^{2} & T_{12}\left(wv\right) & =\left\{ uwv\,\middle|\,uwv\in\mathcal{B}_{1}^{2}\right\} \\
T_{20} & :\mathcal{B}_{1}^{2}\rightarrow\mathcal{B}^{0} & T_{20}\left(uwv\right) & =\left\{ v\right\} .
\end{align*}
 All of the operators $T_{i},T_{ij}$ are regular and geometric.
\begin{prop}
\label{prop:fiber-by-walks}For any $\left(x,y\right)\in\mathbb{N}^{2}$,
we have $\Phi^{-1}\left(x,y\right)=T_{\left(x,y\right)}\left(\xi\right)$,
where 
\[
T_{\left(x,y\right)}=T_{20}\circ T_{2}^{2y}\circ T_{12}\circ T_{1}^{x}\circ T_{01}:\mathcal{B}^{0}\rightarrow\mathcal{B}^{0}.
\]
\end{prop}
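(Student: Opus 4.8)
The plan is to deduce the statement from two facts, exploiting the $G$-equivariance (``geometricity'') of the five operators $T_{01},T_{1},T_{12},T_{2},T_{20}$. Since the composite $T_{(x,y)}$ is geometric it commutes with $K=\mathrm{Stab}_{G}(\xi)$, so $T_{(x,y)}(\xi)$ is a $K$-invariant subset of $\mathcal{B}^{0}$; and since $K$ acts transitively on each fibre $\Phi^{-1}(x',y')$ of the Cartan projection (\S\ref{subsec:Cartan-decomposition}), this set is a union of such fibres. Hence it is enough to prove (i)~$T_{(x,y)}(\xi)\subseteq\Phi^{-1}(x,y)$ and (ii)~$T_{(x,y)}(\xi)\neq\varnothing$: then $T_{(x,y)}(\xi)$ is a non-empty $K$-invariant subset of the single fibre $\Phi^{-1}(x,y)$, so equals it.

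For (ii) I would just exhibit one legal chain of cells inside the fundamental apartment $\mathcal{A}$. Take for $e_{0}$ the type-one edge of $\mathcal{A}$ incident to $\xi$ which, continued straight through $\xi$, runs along the wall $\{(m,0)\mid m\ge0\}$ of $\mathcal{S}$; let $T_{1}^{x}$ flow it straight, so the second vertex of $e_{x}$ is $\varpi^{(x,0,0)}\xi$; let $T_{12}$ hinge on a type-one $2$-cell on $e_{x}$ opening into $\mathcal{S}$; and let $T_{2}^{2y}$ flow this $2$-cell ``straight upward''. A legal continuation exists at each step inside $\mathcal{A}$ — the colour, ``not a triangle'' and ``$y\neq v$'' constraints all hold, using that three vertices of $\mathcal{A}$ on an affine line never span a $2$-cell — so $T_{(x,y)}(\xi)\neq\varnothing$; in fact $T_{20}$ applied to this chain outputs a vertex with $\Phi$-value $(x,y)$, consistently with (i).

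The content is in (i). I would unwind $\zeta\in T_{(x,y)}(\xi)$ to a witnessing chain $\xi,e_{0},\dots,e_{x},\tau_{0},\dots,\tau_{2y},\zeta$ and track the Cartan projection $\Phi$ along it, proving three things. First, the ``not-a-triangle'' clause forces the geodesic edge-flow $T_{1}$ to keep a fixed direction class, which the type-one colouring identifies as the one whose $\Phi$-image is the wall $\{(m,0)\}$; so the second vertex of $e_{x}$ projects to $(x,0)$. Second, every type-one $2$-cell hinged on $e_{x}$ has its third vertex projecting to $(x-1,1)$, so $\tau_{0}$ — whose last vertex $T_{20}$ will read off — sits with that last vertex over $(x,0)$ and opens toward $(x-1,1)$. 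Third, each application of $T_{2}$ pivots the $2$-cell across the edge just created and moves its last vertex's $\Phi$-value to the next point of $\{(x-1,m),(x,m)\}_{m\ge0}$, so that after $2y$ pivots $T_{20}$ reads off a vertex over $(x,y)$, i.e.\ $\Phi(\zeta)=(x,y)$.

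Two points need care, and I expect the second to be the main obstacle. One must check that the colour and (non-)triangle constraints genuinely pin the flow directions down — in particular that $T_{2}$ cannot pivot ``back'' toward smaller $\Phi$-values — which uses the collision-freeness of $T_{1},T_{2}$ together with the structure of the links of edges and $2$-cells, namely spherical buildings of $PGL_{3}(\mathbb{F}_{q})$. More seriously, one must handle the boundary $\partial\mathcal{S}$: when a straight flow in $\mathcal{A}$ would leave the sector, its lift to $\mathcal{B}$ does not, and one has to identify the $\Phi$-image of the folded-back cell while tracking which cells get identified and whether the type-one colour pattern survives the folding — exactly the bookkeeping recorded in Figure~\ref{fig:The-folded-apartment}. (An alternative to the $K$-invariance shortcut is to prove both inclusions directly by induction on $x$ and then on $y$, at the price of an extra surjectivity statement — that every vertex over $(x,y)$ is hit — which is a counting argument in the local $PGL_{3}(\mathbb{F}_{q})$-buildings.)
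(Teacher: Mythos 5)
Your overall skeleton is legitimate and is genuinely different from the paper's argument. Since $T_{(x,y)}$ is a composition of geometric (hence $G$-equivariant) operators and $K$ stabilizes $\xi$, the set $T_{(x,y)}(\xi)$ is $K$-stable; by the Cartan decomposition $K$ acts transitively on each fiber of $\Phi$, so a nonempty $K$-stable subset of a single fiber is the whole fiber. Nonemptiness is in fact immediate (each of the five operators is regular with positive out-degree), so your apartment construction for (ii) is dispensable, and everything reduces, as you say, to the inclusion $T_{(x,y)}(\xi)\subseteq\Phi^{-1}(x,y)$. The paper does not take this soft route: it computes $T_{(x,y)}(\xi)$ exactly, realizing $T_{01},T_{1},T_{12},T_{2},T_{20}$ as the double cosets $KP_{1}$, $P_{1}\mathrm{diag}(\varpi,1,1)P_{1}$, $P_{1}P_{2}$, $P_{2}(\cdot)P_{2}$, $P_{2}K$, importing from \cite[\S 5.1]{Lubetzky2017RandomWalks} the explicit coset decompositions of the $x$-th and $2y$-th powers, and simplifying via $K\left(\begin{smallmatrix}\varpi^{x} & \alpha & \beta\\ & 1\\ & & 1\end{smallmatrix}\right)=K\,\mathrm{diag}(\varpi^{x},1,1)$ and $P_{1}P_{2}\subseteq K_{1,2}P_{2}$ with $K_{1,2}$ commuting with $\mathrm{diag}(\varpi^{x},1,1)$.

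The problem is that step (i), where you correctly locate the content, is not proved but only described. The three tracking claims (terminal vertex of the $T_{1}$-flow over $(x,0)$; third vertex of every admissible $2$-cell on the terminal edge over $(x-1,1)$; each $T_{2}$ pivot advancing the read-off vertex along $\{(x-1,m),(x,m)\}$) are asserted in geometric language, and the two issues you yourself flag --- that the colour and non-triangle constraints pin down the directions, and the behaviour when the projection meets the walls of $\mathcal{S}$ --- are precisely the difficulty, left unresolved. Note also that these claims must be verified for \emph{all} branches simultaneously: $T_{1}$ is $q^{2}$-regular and $T_{2}$ is $q$-regular, so there are $q^{2x}$ terminal edges, $q+1$ admissible triangles on each, and so on, and they are not all translates of the straight chain in the fundamental apartment (only the relevant vertex is constrained; the other vertices of the terminal cells vary over several $K$-orbits). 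So reasoning from one chain in $\mathcal{A}$ plus "folding" intuition does not establish (i); one needs either the link-by-link analysis you defer, or --- the efficient fix, and essentially what the paper does --- to quote the explicit descriptions of the terminal cosets of $T_{1}^{x}$ and $T_{2}^{2y}$ from \cite[\S 5.1]{Lubetzky2017RandomWalks} and check that their leading vertex lies in $K\,\mathrm{diag}(\varpi^{x+y},\varpi^{y},1)\xi$. As written, your text is a plausible programme with the decisive step missing.
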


\begin{proof}
If $\sigma_{1},\sigma_{2}$ are two cells in $\mathcal{B}$ with corresponding
$G$-stabilizers $G_{\sigma_{i}}$, any double coset $G_{\sigma_{1}}gG_{\sigma_{2}}$
defines a geometric branching operator from the orbit $G\sigma_{1}$
to $G\sigma_{2}$, by 
\begin{equation}
\left(G_{\sigma_{1}}gG_{\sigma_{2}}\right)\left(g'\sigma_{1}\right)=g'G_{\sigma_{1}}g\sigma_{2}.\label{eq:Hecke-op}
\end{equation}
Defining $e_{1}=\mathrm{diag}(\varpi,\varpi,1)\xi\rightarrow\xi$
and $\tau_{1}=\left[\mathrm{diag}(\varpi,1,1)\xi,\mathrm{diag}(\varpi,\varpi,1)\xi,\xi\right]$
we have orbits $\mathcal{B}^{0}=G\xi$, $\mathcal{B}_{1}^{1}=Ge_{1}$,
$\mathcal{B}_{1}^{2}=G\tau_{1}$, and stabilizers
\[
K=G_{\xi},\quad P_{1}=G_{e_{1}}=\left(\begin{smallmatrix}\mathcal{O} & \mathcal{O} & \mathcal{O}\\
\varpi\mathcal{O} & \mathcal{O} & \mathcal{O}\\
\varpi\mathcal{O} & \mathcal{O} & \mathcal{O}
\end{smallmatrix}\right)\cap K,\quad P_{2}=G_{\tau_{1}}=\left(\begin{smallmatrix}\mathcal{O} & \mathcal{O} & \mathcal{O}\\
\varpi\mathcal{O} & \mathcal{O} & \mathcal{O}\\
\varpi\mathcal{O} & \varpi\mathcal{O} & \mathcal{O}
\end{smallmatrix}\right)\cap K.
\]
The operators we defined arise as $T_{01}=KP_{1}$, $T_{1}=P_{1}\left(\begin{smallmatrix}\varpi\\
 & 1\\
 &  & 1
\end{smallmatrix}\right)P_{1}$, $T_{12}=P_{1}P_{2}$, $T_{2}=P_{2}\left(\begin{smallmatrix} & 1\\
\varpi\\
 &  & 1
\end{smallmatrix}\right)P_{2}$, and $T_{20}=P_{2}K$. Thus, successively applying \eqref{eq:Hecke-op}
we obtain 
\begin{align*}
T_{\left(x,y\right)}\left(\xi\right) & =KP_{1}\left(P_{1}\left(\begin{smallmatrix}\varpi\\
 & 1\\
 &  & 1
\end{smallmatrix}\right)P_{1}\right)^{x}P_{1}P_{2}\left(P_{2}\left(\begin{smallmatrix} & 1\\
\varpi\\
 &  & 1
\end{smallmatrix}\right)P_{2}\right)^{2y}P_{2}\xi.
\end{align*}
Explicit computation in \cite[§5.1]{Lubetzky2017RandomWalks} shows
that 
\[
\left(P_{1}\left(\begin{smallmatrix}\varpi\\
 & 1\\
 &  & 1
\end{smallmatrix}\right)P_{1}\right)^{x}=\left\{ \left(\begin{smallmatrix}\varpi^{x} & \alpha & \beta\\
 & 1\\
 &  & 1
\end{smallmatrix}\right)\,\middle|\,\alpha,\beta\in\mathcal{O}/\varpi^{x}\mathcal{O}\right\} P_{1},
\]
and we note that $K\left(\begin{smallmatrix}\varpi^{x} & \alpha & \beta\\
 & 1\\
 &  & 1
\end{smallmatrix}\right)=K\left(\begin{smallmatrix}\varpi^{x}\\
 & 1\\
 &  & 1
\end{smallmatrix}\right)$ (for $\alpha,\beta\in\mathcal{O}$), so that 
\[
T_{\left(x,y\right)}\left(\xi\right)=K\left(\begin{smallmatrix}\varpi^{x}\\
 & 1\\
 &  & 1
\end{smallmatrix}\right)P_{1}P_{2}\left(P_{2}\left(\begin{smallmatrix} & 1\\
\varpi\\
 &  & 1
\end{smallmatrix}\right)P_{2}\right)^{2y}\xi
\]
(we have used also $P_{1},P_{2}\leq K$). Denoting $K_{1,2}=\big\{\left(\begin{smallmatrix}\mu\\
 & A
\end{smallmatrix}\right)\,|\,\mu\in\mathcal{O^{\times}},A\in GL_{2}\left(\mathcal{O}\right)\big\}$, one can verify that $P_{1}P_{2}\subseteq K_{1,2}P_{2}$ (in fact,
$P_{1}P_{2}=\left\{ I,\left(\begin{smallmatrix}1\\
 & \mathcal{O} & 1\\
 & 1
\end{smallmatrix}\right)\right\} P_{2}$), and since the elements of $K_{1,2}$ commute with $\left(\begin{smallmatrix}\varpi^{x}\\
 & 1\\
 &  & 1
\end{smallmatrix}\right)$ this implies 
\[
T_{\left(x,y\right)}\left(\xi\right)=K\left(\begin{smallmatrix}\varpi^{x}\\
 & 1\\
 &  & 1
\end{smallmatrix}\right)\left(P_{2}\left(\begin{smallmatrix} & 1\\
\varpi\\
 &  & 1
\end{smallmatrix}\right)P_{2}\right)^{2y}\xi.
\]
Finally, explicit computation shows that
\[
\left(P_{2}\left(\begin{smallmatrix} & 1\\
\varpi\\
 &  & 1
\end{smallmatrix}\right)P_{2}\right)^{2y}=\left\{ \left(\begin{smallmatrix}\varpi^{y} &  & \alpha\\
 & \varpi^{y} & \beta\\
 &  & 1
\end{smallmatrix}\right)\,\middle|\,\alpha,\beta\in\mathcal{O}/\varpi^{y}\mathcal{O}\right\} P_{2},
\]
yielding (with $\alpha,\beta$ ranging over $\mathcal{O}/\varpi^{y}\mathcal{O}$)
\[
T_{\left(x,y\right)}\left(\xi\right)=K\left(\begin{smallmatrix}\varpi^{x+y} &  & \varpi^{x}\alpha\\
 & \varpi^{y} & \beta\\
 &  & 1
\end{smallmatrix}\right)\xi=K\left(\begin{smallmatrix}\varpi^{x+y}\\
 & \varpi^{y}\\
 &  & 1
\end{smallmatrix}\right)\xi=\Phi^{-1}\left(x,y\right).\qedhere
\]
\end{proof}
\begin{prop}
\label{prop:P0_xy_bound}If $r_{1}\leq x+y\leq t_{1}$, then for $n$
large enough $\big\Vert\mathcal{P}_{0}(\mu_{X}^{t_{1},x,y})\big\Vert_{TV}\leq\varepsilon$.
\end{prop}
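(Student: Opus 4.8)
\emph{Overview.} The plan is to rewrite the conditional distribution $\mu_{X}^{t,x,y}$ as the (normalized) distribution of the concatenated walk $T_{(x,y)}$ of Proposition~\ref{prop:fiber-by-walks}, started at $v$, and then to bound the $L_{0}^{2}$-projection of that distribution by feeding the Ramanujan estimates of Theorems~\ref{thm:Col-Free-Ram} and~\ref{thm:digraph-bound} into Cauchy--Schwarz. Since SRW on $\mathcal{B}$ commutes with the $G$-action and $\delta_{\xi}$ is $K$-invariant, the law of $\X_{t}$ is $K$-invariant; as the fibre $\Phi^{-1}(x,y)=K\diag(\varpi^{x+y},\varpi^{y},1)\xi$ is a single $K$-orbit, conditioning on $\{x(t)=x,\,y(t)=y\}=\{\Phi(\X_{t})=(x,y)\}$ yields the uniform measure on $\Phi^{-1}(x,y)$. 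On the other hand, let $A_{01},A_{1},A_{12},A_{2},A_{20}$ be the adjacency operators of $\mathcal{D}_{T_{01}}(\mathcal{B}),\dots,\mathcal{D}_{T_{20}}(\mathcal{B})$ and set $v_{x,y}=A_{20}A_{2}^{2y}A_{12}A_{1}^{x}A_{01}\,\delta_{\xi}$; by Proposition~\ref{prop:fiber-by-walks} this is supported on $\Phi^{-1}(x,y)$, and since the operators $T_{01},T_{1},T_{12},T_{2},T_{20}$ are geometric, hence $K$-equivariant, $v_{x,y}$ is $K$-invariant, so being supported on a single $K$-orbit it is a constant multiple of $\one_{\Phi^{-1}(x,y)}$. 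Comparing $\ell^{1}$-masses gives $\tfrac1N v_{x,y}=\mathrm{Unif}(\Phi^{-1}(x,y))$, where $N=k_{01}k_{1}^{x}k_{12}k_{2}^{2y}$ is the product of the out-regularities (here $k_{1}=q^{2}$, $k_{2}=q$, $\deg T_{20}=1$, and $k_{01},k_{12}$ depend only on $q$). Pushing this down along a covering map with $\varphi(\xi)=v$, using $T_{\bullet}|_{X}=\varphi T_{\bullet}\varphi^{-1}$, we get $\mu_{X}^{t,x,y}=\tfrac1N A_{20}A_{2}^{2y}A_{12}A_{1}^{x}A_{01}\,\delta_{v}$, where now the $A_{\bullet}$ act on cells of $X$.

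\emph{Separating the trivial part and bounding norms.} Each operator above is geometric, hence preserves the decomposition $L^{2}=L_{\col}^{2}\oplus L_{0}^{2}$ of the relevant cell-space (its adjoint is again geometric, hence maps $G'$-orbit-constant functions to such); in particular $A_{01}(L_{0}^{2}(X^{0}))\subseteq L_{0}^{2}(X_{1}^{1})$, $A_{1}$ preserves $L_{0}^{2}(X_{1}^{1})$, and so on up to $A_{20}(L_{0}^{2}(X_{1}^{2}))\subseteq L_{0}^{2}(X^{0})$. Therefore $\mathcal{P}_{0}\mu_{X}^{t,x,y}=\tfrac1N A_{20}A_{2}^{2y}A_{12}A_{1}^{x}A_{01}\,\mathcal{P}_{0}\delta_{v}$ with all operators restricted to the corresponding $L_{0}^{2}$-subspaces, so $\Vert\mathcal{P}_{0}\delta_{v}\Vert_{2}\le1$ gives
\[
\big\Vert\mathcal{P}_{0}\mu_{X}^{t,x,y}\big\Vert_{2}\le\tfrac1N\big\Vert A_{20}|_{L_{0}^{2}}\big\Vert\,\big\Vert A_{2}^{2y}|_{L_{0}^{2}}\big\Vert\,\big\Vert A_{12}|_{L_{0}^{2}}\big\Vert\,\big\Vert A_{1}^{x}|_{L_{0}^{2}}\big\Vert\,\big\Vert A_{01}|_{L_{0}^{2}}\big\Vert .
\]
The dimension-changing operators $A_{01},A_{12},A_{20}$ are biregular with regularities independent of $n$, so by the Schur test their operator norms are bounded by a constant $c=c(q)$. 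For $A_{1},A_{2}$ we invoke Theorem~\ref{thm:Col-Free-Ram}: since $T_{1}$ (resp.\ $T_{2}$) is the collision-free geodesic edge-flow (resp.\ triangle-flow) of \cite[\S5.1]{Lubetzky2017RandomWalks} and $X$ is Ramanujan, $\mathcal{D}_{T_{1}}(X)$ is a $q^{2}$-regular $3$-normal Ramanujan digraph and $\mathcal{D}_{T_{2}}(X)$ is a $q$-regular $6$-normal Ramanujan digraph (here $3=3!/2!$, $6=3!/1!$). Since the nontrivial spectra of $A_{1},A_{2}$ lie in $\{|z|\le q\}$ and $\{|z|\le\sqrt q\}$, Theorem~\ref{thm:digraph-bound} yields $\Vert A_{1}^{x}|_{L_{0}^{2}}\Vert\le\binom{x+2}{2}q^{x+2}$ and $\Vert A_{2}^{2y}|_{L_{0}^{2}}\Vert\le\binom{2y+5}{5}q^{(2y+5)/2}$.

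\emph{Assembling and converting to total variation.} Using $N=k_{01}k_{12}q^{2x+2y}$ the bounds combine to $\big\Vert\mathcal{P}_{0}\mu_{X}^{t,x,y}\big\Vert_{2}\le C_{q}\binom{x+2}{2}\binom{2y+5}{5}q^{-(x+y)}$ for a constant $C_{q}$, hence by Cauchy--Schwarz and $\binom{x+2}{2}\binom{2y+5}{5}=O((x+y)^{7})$,
\[
\big\Vert\mathcal{P}_{0}\mu_{X}^{t,x,y}\big\Vert_{TV}\le\tfrac12\sqrt n\,\big\Vert\mathcal{P}_{0}\mu_{X}^{t,x,y}\big\Vert_{2}\le C_{q}'\,(x+y+5)^{7}\,q^{-(x+y)}\sqrt n .
\]
For $n$ large the function $m\mapsto(m+5)^{7}q^{-m}$ is decreasing on $m\ge r_{1}$, so using $x+y\ge r_{1}=\log_{q^{2}}n+16\log_{q^{2}}\log_{q^{2}}n$ together with the identity $q^{-r_{1}}\sqrt n=(\log_{q^{2}}n)^{-8}$ we obtain
\[
\big\Vert\mathcal{P}_{0}\mu_{X}^{t,x,y}\big\Vert_{TV}\le C_{q}'\,(r_{1}+5)^{7}\,(\log_{q^{2}}n)^{-8}=O\!\big((\log_{q^{2}}n)^{-1}\big),
\]
which is below $\varepsilon$ once $n$ is large enough.

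\emph{Main obstacle.} The delicate point is the identification of $\mu_{X}^{t,x,y}$ with the normalized concatenated walk: one needs both that $\X_{t}$ conditioned on its $\Phi$-image is \emph{exactly} uniform on the corresponding $K$-orbit, and that the concatenated walk --- which splits uniformly at every branching --- reaches every point of that orbit the same number of times. Both rest on $K$-equivariance but should be spelled out. A secondary point is to check the compatibility of the two uses of ``$L_{0}^{2}$'': one must verify that the trivial subspace $L_{0}^{2}(X_{1}^{j})$ is contained in $L_{0}^{2}(\mathcal{D}_{T_{j}}(X))$ --- i.e.\ every modulus-$k$ eigenfunction of $A_{j}$ is trivial, which for a Ramanujan $X$ follows since the nontrivial spectrum has modulus $\le\sqrt k<k$ --- so that Theorem~\ref{thm:digraph-bound} applies to $A_{j}|_{L_{0}^{2}(X_{1}^{j})}$ verbatim.
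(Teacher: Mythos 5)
Your proposal is correct and follows essentially the same route as the paper: identify $\mu_{X}^{t,x,y}$ with the uniform distribution on the $K$-orbit $\Phi^{-1}(x,y)$ via $K$-invariance and Proposition \ref{prop:fiber-by-walks}, split off $L_{0}^{2}$, apply Theorems \ref{thm:Col-Free-Ram} and \ref{thm:digraph-bound} to the collision-free flows $T_{1},T_{2}$, and convert to total variation with $\left\Vert \cdot\right\Vert _{TV}\leq\frac{\sqrt{n}}{2}\left\Vert \cdot\right\Vert _{2}$ and the choice of $r_{1}$. The only (cosmetic) difference is that you work with unnormalized adjacency operators divided by $N$ and bound the dimension-changing operators by a Schur-test constant, whereas the paper uses the stochastic operators $\widetilde{T}$ with their exact norms; the resulting estimates coincide.
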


\begin{proof}
Recall that $\varphi$ induces a correspondence between SRW on $\mathcal{B}$
and $X$, so that $\mu_{X}^{t,x,y}$ (for any $t,x,y$) is the pushforward
of $\mu_{\mathcal{B},\xi}^{t,x,y}$ by $\varphi$: 
\[
\mu_{X}^{t,x,y}(w)=\mu_{\mathcal{B},\xi}^{t,x,y}(\varphi^{-1}\left(w\right))=\mathbb{P}\big[\mathcal{X}_{t}\in\varphi^{-1}\left(w\right)\,\big|\,\begin{smallmatrix}x(t)=x\\
y(t)=y
\end{smallmatrix}\big]\qquad\forall w\in X^{0}.
\]
It follows from the Cartan decomposition that the distances from $\vec{x}$
and $\vec{y}$ together determine a unique $K$-orbit in $\mathcal{B}^{0}$.
Since the SRW on $\mathcal{B}$ commutes with $K$, this implies that
for any distance profile $(x,y)\in\mathbb{N}\times\mathbb{N}$ the
distribution $\mu_{\mathcal{B},\xi}^{t,x,y}$ is the uniform distribution
over $\Phi^{-1}\left(x,y\right)$, which we denote by $\pi_{x,y}$.
We conclude that $\mu_{X}^{t,x,y}=\pi_{x,y}\circ\varphi^{-1}$. For
any of the geometric operators $T=T_{i},T_{ij},T_{\left(x,y\right)}$,
we denote by $\widetilde{T}$ the corresponding stochastic operator
on $L^{2}$-spaces, e.g.
\[
\widetilde{T}_{01}:L^{2}\left(\mathcal{B}^{0}\right)\rightarrow L^{2}\left(\mathcal{B}_{1}^{1}\right),\qquad\big(\widetilde{T}_{01}f\big)\left(e\right)=\sum_{w:e\in T_{01}\left(w\right)}\frac{f\left(w\right)}{|T_{01}\left(w\right)|}.
\]
By Prop.\ \ref{prop:fiber-by-walks}, $\supp\widetilde{T_{\left(x,y\right)}}\left(\one_{\xi}\right)\subseteq\Phi^{-1}\left(x,y\right)$.
Furthermore, $\widetilde{T_{\left(x,y\right)}}\left(\one_{\xi}\right)$
is $K$-invariant as $\widetilde{T_{\left(x,y\right)}}\left(\one_{\xi}\right)\left(k\xi'\right)=\widetilde{T_{\left(x,y\right)}}\left(\one_{k^{-1}\xi}\right)\left(\xi'\right)=\widetilde{T_{\left(x,y\right)}}\left(\one_{\xi}\right)\left(\xi'\right)$,
hence $\widetilde{T_{\left(x,y\right)}}\left(\one_{\xi}\right)=\pi_{x,y}$.
The stochastic operator $\widetilde{T}|_{X}$ corresponding to $T|_{X}=\varphi T\varphi^{-1}$
satisfies $(\widetilde{T}\mu)\!\circ\!\varphi^{-1}=\widetilde{T}|_{X}(\mu\!\circ\!\varphi^{-1})$
for any distribution $\mu$ on $\mathcal{B}$, so that
\begin{equation}
\mu_{X}^{t,x,y}=\widetilde{T_{\left(x,y\right)}}\left(\one_{\xi}\right)\circ\varphi^{-1}=\widetilde{T_{\left(x,y\right)}}\big|_{X}\left(\one_{v}\right)=\widetilde{T_{20}}\widetilde{T_{2}^{2y}}\widetilde{T_{12}}\widetilde{T_{1}^{x}}\widetilde{T_{01}}\big|_{X}\left(\one_{v}\right).\label{eq:L2-decomposition}
\end{equation}
It follows from the regularity of incidence relations in $X$ that
the operators $\widetilde{T_{i}}|_{X}$ and $\widetilde{T_{ij}}|_{X}$
decompose with respect to the direct sums $L^{2}=L_{\col}^{2}\oplus L_{0}^{2}$
of the appropriate cells, and in particular $\mathcal{P}_{0}(\mu_{X}^{t,x,y})=\widetilde{T_{\left(x,y\right)}}\big|_{X}\left(\mathcal{P}_{0}\left(\one_{v}\right)\right)$.
The operators $T_{1}$ and $T_{2}$ are $q^{2}$- and $q$-regular,
respectively, and they are shown in \cite[Prop.\ 5.2]{Lubetzky2017RandomWalks}
to be collision-free. By Theorems \ref{thm:Col-Free-Ram} and \ref{thm:digraph-bound},
this implies 
\begin{align*}
\left\Vert \widetilde{T_{1}^{x}}|_{L_{0}^{2}\left(X_{1}^{1}\right)}\right\Vert _{2} & \leq\frac{1}{q^{2x}}{x+2 \choose 2}q^{4}\cdot q^{x-2}={x+2 \choose 2}q^{2-x},\\
\left\Vert \widetilde{T_{2}^{2y}}|_{L_{0}^{2}\left(X_{1}^{2}\right)}\right\Vert _{2} & \leq\frac{1}{q^{2y}}{2y+5 \choose 5}q^{5}\cdot\sqrt{q}^{2y-5}={2y+5 \choose 5}q^{5/2-y}.
\end{align*}
In addition, we have $\Vert\widetilde{T_{01}}|_{X}\Vert_{2}=1/\sqrt{q^{2}+q+1}$,
$\Vert\widetilde{T_{12}}|_{X}\Vert_{2}=1/\sqrt{q+1}$ and $\Vert\widetilde{T_{20}}|_{X}\Vert_{2}=\sqrt{(q^{2}+q+1)(q+1)}$
by degree considerations and evaluation on constant functions. Returning
to \eqref{eq:L2-decomposition}, we use $\left\Vert \cdot\right\Vert _{TV}\leq\frac{\sqrt{n}}{2}\left\Vert \cdot\right\Vert _{2}$
to conclude that
\begin{align*}
\big\Vert\mathcal{P}_{0}(\mu_{X}^{t,x,y})\big\Vert_{TV} & \leq\tfrac{\sqrt{n}}{2}\big\Vert\widetilde{T_{\left(x,y\right)}}\big|_{L_{0}^{2}\left(X^{0}\right)}\left(\mathcal{P}_{0}\left(\one_{v}\right)\right)\big\Vert_{2}\\
 & \leq\tfrac{\sqrt{n}}{2}\big\Vert\widetilde{T_{\left(x,y\right)}}\big|_{L_{0}^{2}\left(X^{0}\right)}\big\Vert_{2}\leq\frac{\sqrt{n}{\textstyle {x+2 \choose 2}}{\textstyle {2y+5 \choose 5}}q^{9/2}}{2q^{x+y}}.
\end{align*}
Taking now $r_{1}\leq x+y\leq t_{1}$, we assume $n$ is large enough
that $t_{1}\leq3r_{1}$, hence for $n$ large enough
\begin{align*}
\big\Vert\mathcal{P}_{0}(\mu_{X}^{t_{1},x,y})\big\Vert_{TV} & \leq\frac{\sqrt{n}{\textstyle {3r_{1}+2 \choose 2}}{\textstyle {6r_{1}+5 \choose 5}}q^{9/2}}{2q^{r_{1}}}\leq\frac{\sqrt{n}\left(7r_{1}\right)^{7}q^{9/2}}{2q^{r_{1}}}\\
 & =\frac{(7\log_{q^{2}}n+112\log_{q^{2}}\log_{q^{2}}n)^{7}q^{9/2}}{2(\log_{q^{2}}n)^{8}}\leq\varepsilon.\qedhere
\end{align*}
\end{proof}
We come to the proof of the main theorem of this section:
\begin{proof}[Proof of Theorem \ref{thm:cutoff-pgl3}]
From \eqref{eq:P0_Pcol}, Prop.\ \ref{prop:triangle-mix} (which
applies once $t_{1}\geq t_{\triangle}$), \eqref{eq:P0_bound}, and
Prop.\ \ref{prop:P0_xy_bound} we conclude that
\[
\left\Vert \mu_{X}^{t_{1}}-\pi_{X}\right\Vert _{TV}\leq3\varepsilon+\max_{r_{1}\leq x+y\leq t_{1}}\big\Vert\mathcal{P}_{0}(\mu_{X}^{t_{1},x,y})\big\Vert_{TV}\leq4\varepsilon,
\]
so that $t_{mix}(4\eps)\leq t_{1}$. Together with Prop.\ \ref{prop:mix-t0},
this implies the cutoff phenomenon at time $\frac{q^{2}+q+1}{q^{2}-1}\log_{q^{2}}n$,
with a window of size $O(\sqrt{\log_{q^{2}}n})$.
\end{proof}

\section{\label{sec:The-d-case}The case of $\textrm{PGL}{}_{d}$ for all
$d\protect\geq2$}

The main difference between $\mathrm{PGL}_{3}$ and the general case
is that $\mathrm{PGL}_{3}$ acts transitively on $\mathcal{B}^{j}$
for all $j$, but the same does not happen for general $d$. As a
result, the projected walk on the sector $\mathcal{S}=K\backslash\mathcal{B}$
is no longer isotropic - some directions are more likely to be chosen
than others. Our approach is to define a suitable metric on $\mathcal{S}$
and $\mathcal{B}$, which takes this asymmetry into account. Albeit,
$\mathrm{PGL}_{d}$ still acts transitively on $\mathcal{B}^{0}$,
so the 1-skeleton of $\mathcal{B}$ is a regular graph.

\subsection{The projected walk on $\mathcal{S}$}

As in Section \ref{sec:d=00003D3}, we consider a SRW $\left(\mathcal{X}_{t}\right)$
on $\mathcal{B}$ starting from $\xi$, which projects modulo $K$
to the weighted random walk on $\mathcal{S}$. Recalling the identification
$S\cong\mathbb{N}^{d-1}$, we define $x_{i}(t)$ to be the $i$-th
index of the projected walk $\Phi\left(\X_{t}\right)$, so that $\Phi\left(\mathcal{X}_{t}\right)=\vec{x}\left(t\right)=(x_{1}\left(t\right),\ldots,x_{d-1}\left(t\right)).$
We consider $\mathcal{S}$ as a weighted directed graph, with the
weight of an edge being the probability that the projected walk chooses
this edge. The weights are easier to describe outside the boundary:
it follows from the identification of the link of a vertex as the
flag complex of $\mathbb{F}_{q}^{d}$ that for every $\gamma\in\left\{ 0,1\right\} ^{d}\backslash\left\{ \zero,\one\right\} $
(see §\ref{subsec:Cartan-decomposition}) and $\varpi^{\alpha}\xi\in\mathcal{S\backslash\partial\mathcal{S}}$,
the probability of moving from $\varpi^{\alpha}\xi$ to $\varpi^{\alpha+\gamma}\xi$
is 
\[
\mathbb{P}\left[\varpi^{\alpha}\xi\rightarrow\varpi^{\alpha+\gamma}\xi\right]=\frac{q^{Z_{\gamma}}}{\deg\xi},\quad\text{where }Z_{\gamma}=\#\{(i,j)\ |\ i<j,\gamma_{i}=1,\gamma_{j}=0\}.
\]
At the boundary, the only difference is that $\gamma$ which leads
outside of $\mathcal{S}$ is folded back into it by the action of
the spherical Weyl group $S_{d}$.
\begin{claim}
\label{claim:qtimes}If $\Phi\left(\mathcal{X}_{t-1}\right)\notin\partial\mathbb{N}^{d-1}$,
then 
\[
\mathbb{P}\left[x_{i}(t)-x_{i}(t-1)=1\right]=q\cdot\mathbb{P}\left[x_{i}(t)-x_{i}(t-1)=-1\right]\qquad(1\leq i\leq d-1).
\]
\end{claim}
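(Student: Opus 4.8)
The plan is to make the effect of one projected step on the coordinates $x_i$ explicit, and then reduce the claim to the classical bubble-sort identity for inversions in binary words. Recall the identification $\mathcal{S}\cong\mathbb{N}^{d-1}$ under which $\mathrm{diag}(\varpi^{\alpha_1},\dots,\varpi^{\alpha_{d-1}},1)\xi$ corresponds to $(\alpha_1-\alpha_2,\dots,\alpha_{d-2}-\alpha_{d-1},\alpha_{d-1})$; writing $\alpha_d=0$, this says $x_i=\alpha_i-\alpha_{i+1}$ for all $1\le i\le d-1$. The hypothesis $\Phi(\mathcal{X}_{t-1})\notin\partial\mathbb{N}^{d-1}$ means $x_i(t-1)\ge1$ for every $i$, i.e.\ $\alpha_1>\alpha_2>\cdots>\alpha_{d-1}>\alpha_d=0$ with consecutive gaps at least $1$.

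First I would check that in this interior situation no folding occurs and compute the step exactly. For $\gamma\in\{0,1\}^d\setminus\{\zero,\one\}$, the gap condition $\alpha_i\ge\alpha_{i+1}+1$ gives $\alpha_i+\gamma_i\ge\alpha_{i+1}+\gamma_{i+1}$, so $\varpi^{\alpha+\gamma}$ is already weakly decreasing and lies in $\mathcal{S}$, requiring no reordering by the spherical Weyl group; normalizing its last coordinate to $1$ amounts to subtracting $\gamma_d$ from every exponent, so the new position has exponent vector $\alpha+\gamma-\gamma_d\one$. Hence for every $1\le i\le d-1$,
\[
x_i(t)-x_i(t-1)=\bigl[(\alpha_i+\gamma_i-\gamma_d)-(\alpha_{i+1}+\gamma_{i+1}-\gamma_d)\bigr]-(\alpha_i-\alpha_{i+1})=\gamma_i-\gamma_{i+1}
\]
(for $i=d-1$ this reads $\gamma_{d-1}-\gamma_d$, consistent with $\alpha_d=0$). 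In particular $x_i(t)-x_i(t-1)=1$ iff $(\gamma_i,\gamma_{i+1})=(1,0)$ and $x_i(t)-x_i(t-1)=-1$ iff $(\gamma_i,\gamma_{i+1})=(0,1)$. Since outside the boundary the edge labelled $\gamma$ is taken with probability $q^{Z_\gamma}/\deg\xi$, this gives
\[
\mathbb{P}[x_i(t)-x_i(t-1)=1]=\frac{1}{\deg\xi}\sum_{\gamma_i=1,\,\gamma_{i+1}=0} q^{Z_\gamma},\qquad \mathbb{P}[x_i(t)-x_i(t-1)=-1]=\frac{1}{\deg\xi}\sum_{\gamma_i=0,\,\gamma_{i+1}=1} q^{Z_\gamma}.
\]

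It then remains to relate the two sums. Let $s_i$ swap the $i$-th and $(i+1)$-th coordinates of $\gamma$; it is a bijection from $\{\gamma:\gamma_i=1,\gamma_{i+1}=0\}$ onto $\{\gamma:\gamma_i=0,\gamma_{i+1}=1\}$, and I claim $Z_{s_i\gamma}=Z_\gamma-1$ when $\gamma_i=1,\gamma_{i+1}=0$. Indeed, a pair $(a,b)$ with $\{a,b\}\cap\{i,i+1\}=\varnothing$ is unaffected; for a fixed $a\notin\{i,i+1\}$ the two pairs involving $a$ and one of $i,i+1$ contribute the same total to $Z_\gamma$ and to $Z_{s_i\gamma}$, because exactly one of $\gamma_i,\gamma_{i+1}$ equals $1$ and one equals $0$ both before and after the swap; and the pair $(i,i+1)$ itself counts in $Z_\gamma$ but not in $Z_{s_i\gamma}$. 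Re-indexing the first sum by $\gamma'=s_i\gamma$ therefore yields $\sum_{\gamma_i=1,\gamma_{i+1}=0}q^{Z_\gamma}=q\sum_{\gamma'_i=0,\gamma'_{i+1}=1}q^{Z_{\gamma'}}$, which is exactly $\mathbb{P}[x_i(t)-x_i(t-1)=1]=q\cdot\mathbb{P}[x_i(t)-x_i(t-1)=-1]$. The only point requiring any care is the first one --- verifying that the interior hypothesis precludes folding and that normalizing by $\gamma_d$ produces the uniform formula $x_i(t)-x_i(t-1)=\gamma_i-\gamma_{i+1}$ valid for all $i$ at once; once that is in place the inversion identity is routine.
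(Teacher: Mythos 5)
Your proof is correct and follows essentially the same route as the paper's: identify the change in $x_i$ with the difference of two adjacent coordinates of $\gamma$ and pair each such $\gamma$ with its adjacent transposition, which lowers the inversion count $Z_\gamma$ by exactly one and hence scales the weight $q^{Z_\gamma}/\deg\xi$ by $q$. Your additional verification that the interior hypothesis rules out folding (and that the normalization by $\gamma_d$ cancels in the differences) is a careful spelling-out of what the paper leaves implicit, and your indexing $\gamma_i-\gamma_{i+1}$ versus the paper's $\gamma_{i-1}-\gamma_i$ is only a shift of labels.
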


\begin{proof}
When moving along $\gamma$ (except at the boundary) the $i$-th index
$x_{i}$ of the projected walk changes by $\gamma_{i-1}-\gamma_{i}$.
The permutation $\tau$ on $\left\{ 0,1\right\} ^{d}$, which transposes
the $i$-th and $(i-1)$-th indices, induces an involution on $\left\{ 0,1\right\} ^{d}\backslash\left\{ \zero,\one\right\} $
that reverses the change in $x_{i}$. If $\gamma_{i-1}=1$ and $\gamma_{i}=0$
then $Z_{\gamma}=\frac{1}{q}Z_{\tau(\gamma)}$, so for every edge
that decreases $x_{i}$ there is an edge which increases it whose
weight is $q$ times larger.
\end{proof}
In what follows, for $\gamma\in\mathbb{Z}^{d}$ we denote by $\gamma'$
the difference vector
\[
\gamma'=(\gamma_{1}-\gamma_{2},\gamma_{2}-\gamma_{3},\dots,\gamma_{d-1}-\gamma_{d})\in\mathbb{Z}^{d-1}.
\]

\begin{prop}
\label{Prop:Transience2}The projected walk $\Phi\left(\mathcal{X}_{t}\right)$
visits $\partial\mathbb{N}^{d-1}$ only a finite number of times with
probability one.
\end{prop}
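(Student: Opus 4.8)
The plan is to combine a one--step drift estimate with a Borel--Cantelli argument. For $\alpha\in\mathbb{N}^{d-1}$ write $\mathrm{depth}(\alpha)=\min_{i}\alpha_{i}$ in the coordinates $x_{1},\dots,x_{d-1}$ of $\mathcal{S}\cong\mathbb{N}^{d-1}$, so that $\partial\mathbb{N}^{d-1}=\{\mathrm{depth}=0\}$; fix an integer $R$ with $(d-1)q^{-R}<\tfrac12$. Everything follows from two estimates. First, a \emph{deep escape estimate}: if $\mathrm{depth}(\Phi(\mathcal{X}_{0}))\ge R$, then $\Phi(\mathcal{X}_{t})$ ever reaches $\partial\mathbb{N}^{d-1}$ with probability at most $(d-1)q^{-R}$. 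Second, a \emph{uniform reachability estimate}: there is $c>0$ such that from \emph{any} vertex the walk reaches $\{\mathrm{depth}\ge R\}$ within $(d-1)R$ steps with probability at least $c$; iterating over disjoint blocks of $(d-1)R$ steps, this gives $\tau^{*}:=\inf\{t:\mathrm{depth}(\Phi(\mathcal{X}_{t}))\ge R\}<\infty$ almost surely from every starting vertex.

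For the deep escape estimate, note that by Claim~\ref{claim:qtimes}, as long as the walk is in the interior $\mathbb{N}^{d-1}\setminus\partial\mathbb{N}^{d-1}$, each coordinate satisfies $\mathbb{P}[x_{i}\uparrow]=q\,\mathbb{P}[x_{i}\downarrow]$, and a one-line computation then gives $\mathbb{E}\big[q^{-x_{i}(t)}\mid\mathcal{F}_{t-1}\big]=q^{-x_{i}(t-1)}$ whenever $\Phi(\mathcal{X}_{t-1})\notin\partial\mathbb{N}^{d-1}$. Since the walk stays in the interior up to the first hitting time $\tau_{\partial}$ of $\partial\mathbb{N}^{d-1}$, the process $q^{-x_{i}(t\wedge\tau_{\partial})}$ is a $[0,1]$-valued martingale for each $i$; summing over $i$ and letting $t\to\infty$ (bounded convergence) yields $\mathbb{E}\big[\sum_{i}q^{-x_{i}(\tau_{\partial})}\,;\,\tau_{\partial}<\infty\big]\le\sum_{i}q^{-x_{i}(0)}$, which is $\le(d-1)q^{-R}$ when $\mathrm{depth}(\Phi(\mathcal{X}_{0}))\ge R$. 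As some coordinate vanishes at time $\tau_{\partial}$, the integrand is $\ge1$ on $\{\tau_{\partial}<\infty\}$, so $\mathbb{P}[\tau_{\partial}<\infty]\le(d-1)q^{-R}$, as claimed.

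For the reachability estimate, observe that for $1\le j\le d-1$ the binary vector $\gamma^{(j)}=(\underbrace{1,\dots,1}_{j},0,\dots,0)$ lies in $\{0,1\}^{d}\setminus\{\zero,\one\}$, and the corresponding move changes $x_{i}$ by $\gamma^{(j)}_{i}-\gamma^{(j)}_{i+1}$, i.e.\ it raises $x_{j}$ by one and leaves every other coordinate fixed; moreover it points into $\mathcal{S}$ (it only enlarges $\alpha_{j}-\alpha_{j+1}$ and keeps the last coordinate zero), hence is never folded and has probability at least $1/\deg\xi$ at every vertex. Pumping up each deficient coordinate in turn---at most $d-1$ of them, each by at most $R$ such moves---reaches $\{\mathrm{depth}\ge R\}$ within $(d-1)R$ steps with probability at least $(1/\deg\xi)^{(d-1)R}=:c>0$, uniformly in the start. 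Finally, let $A=\{\Phi(\mathcal{X}_{t})\in\partial\mathbb{N}^{d-1}\text{ for infinitely many }t\}$ and set $\sigma_{1}=\tau^{*}$, $\eta_{k}=\inf\{t>\sigma_{k}:\Phi(\mathcal{X}_{t})\in\partial\mathbb{N}^{d-1}\}$ and $\sigma_{k+1}=\inf\{t>\eta_{k}:\mathrm{depth}(\Phi(\mathcal{X}_{t}))\ge R\}$; by the reachability estimate $\sigma_{1}<\infty$ a.s.\ and $\sigma_{k+1}<\infty$ a.s.\ on $\{\eta_{k}<\infty\}$, and on $A$ all these times are finite because the walk must revisit $\partial\mathbb{N}^{d-1}$ after each $\sigma_{k}$. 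Thus $A\subseteq\bigcap_{k}\{\eta_{k}<\infty\}$ with $\{\eta_{k+1}<\infty\}\subseteq\{\eta_{k}<\infty\}$, and the strong Markov property at $\sigma_{k+1}$ combined with the deep escape estimate gives $\mathbb{P}[\eta_{k+1}<\infty]\le(d-1)q^{-R}\,\mathbb{P}[\eta_{k}<\infty]$, whence $\mathbb{P}[A]=0$. The one point needing care is the observation in the middle paragraph that the walk remains in the interior up to $\tau_{\partial}$, which is exactly what lets Claim~\ref{claim:qtimes} deliver the martingale $q^{-x_{i}(t\wedge\tau_{\partial})}$; the rest is bookkeeping with stopping times.
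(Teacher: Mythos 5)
Your proof is correct and follows essentially the same route as the paper: escape to a uniformly deep region in a bounded number of steps with probability bounded below, then bound the probability of ever returning to $\partial\mathbb{N}^{d-1}$ coordinate-wise by $q^{-x_i}$ (the paper's ``standard arguments'', which you make explicit via the martingale $q^{-x_i(t\wedge\tau_{\partial})}$), and iterate using the Markov property. The only cosmetic difference is the finish: you show $\mathbb{P}[\eta_k<\infty]$ decays geometrically via stopping times, while the paper bounds the expected number of boundary visits by a geometric series---two standard conclusions from the same two estimates.
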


\begin{proof}
In essence this follows from the fact that the boundary is sink-less,
and on its complement the walk is a positively-drifted walk on $\mathbb{Z}^{d-1}$.
Namely, from any point in $\mathbb{N}^{d-1}$ the probability to enter
$D=\{\alpha\in\mathbb{N}^{d-1}\ |\ \forall i,\alpha_{i}\ge d-1\}$
in $d(d-1)$ steps is at least $\left(\deg\xi\right)^{-d(d-1)}$.
Let $P_{\vec{x}}$ be the probability that a walk which starts from
$\vec{x}\in\mathbb{N}^{d-1}$ ever touches the boundary. This is the
same as the probability of the walk on $\mathbb{Z}^{d-1}$ with transition
probability $\frac{q^{Z_{\gamma}}}{\deg\xi}$ of moving along $\gamma'$,
where $\gamma\in\left\{ 0,1\right\} ^{d}\backslash\left\{ \zero,\one\right\} $,
to ever reach from $\vec{x}$ to a point with a zero coordinate. This
is bounded by $\sum_{i=1}^{d-1}P_{\vec{x},i}$, where $P_{\vec{x},i}$
is the probability that the $i^{th}$ coordinate ever vanishes. But
on $\mathbb{Z}^{d-1}$ each coordinate is a drifted walk as in Claim
\ref{claim:qtimes}, hence it follows from standard arguments that
$P_{\vec{x},i}\leq\frac{1}{q^{x_{i}}}$. Thus, for $\vec{x}\in D$
we obtain $P_{\vec{x}}\le\frac{d}{q^{d}}<1$, and it follows that
the expected number of visits to the boundary is bounded by $\sum_{i=0}^{\infty}d^{i}\left(\deg\xi\right)^{d(d-1)}/q^{di}<\infty$.
\end{proof}

\subsection{Geometric operators on $\mathcal{B}(\mathrm{PGL}_{d})$}

For $1\leq j<d$, the geodesic $j$-flow $T_{j}$ defined in \cite{Lubetzky2017RandomWalks}
is a $q^{d-j}$-regular branching operator on $\mathcal{B}_{1}^{j}$
(the $j$-cells of type one), which takes the (ordered) cell $[v_{0},\ldots,v_{j}]$
to all cells $[v_{1},\ldots,v_{j},w]\in\mathcal{B}_{1}^{j}$ such
that $\left\{ v_{0},\ldots,v_{j},w\right\} \notin\mathcal{B}$. Defining
$\xi_{i}=\diag(\varpi^{\times(d-i)},1^{\times i})\xi$, and $\sigma^{j}=[\xi_{j},\xi_{j-1},\ldots,\xi_{0}]$
we have $\mathcal{B}_{1}^{j}=G\sigma^{j}$, and the operator $T_{j}$
corresponds to the double coset $P_{j}w_{j}P_{j}$, where
\[
P_{j}:=G_{\sigma^{j}}=\left\{ g\in K\,\middle|\,{g_{r,c}\in\varpi\mathcal{O}\text{ for}\atop c\leq\min\left(j,r-1\right)}\right\} ,\quad w_{j}=\left(\begin{array}{c|c|c}
 & ~\large{\mbox{\ensuremath{I_{j-1}}}}\vphantom{\Big)}~\\
\hline \varpi &  & 0\:\cdots\:0\\
\hline  &  & ~\large{\text{\ensuremath{I_{d-j}}}}\vphantom{\Big)}\ 
\end{array}\right)
\]
(note $\mathcal{B}_{1}^{0}=\mathcal{B}^{0}=G\sigma^{0}$ and $P_{0}=K$,
though there is no $0$-flow). Each double coset $P_{j}P_{j+1}$ ($0\leq j\leq d-2$)
gives via \eqref{eq:Hecke-op} an operator $T_{j,j+1}:\mathcal{B}_{1}^{j}\rightarrow\mathcal{B}_{1}^{j+1}$,
which takes $\sigma\in\mathcal{B}_{1}^{j}$ to all $v\sigma\in\mathcal{B}_{1}^{j+1}$
($v\in\mathcal{B}^{0}$). In addition, $P_{d-1}P_{0}$ yields $T_{d-1,0}:\mathcal{B}_{1}^{d-1}\rightarrow\mathcal{B}^{0}$
which returns the last vertex of a cell.
\begin{prop}
\label{prop:fiber-decomposition}For any $\vec{x}\in\mathbb{N}^{d-1}$,
the fiber $\Phi^{-1}\left(\vec{x}\right)$ equals $T_{\vec{x}}\left(\xi\right)$,
where
\[
T_{\vec{x}}:=T_{d-1,0}\prod\nolimits _{j=d-1}^{1}T_{j}^{jx_{j}}T_{j-1,j}:\mathcal{B}^{0}\rightarrow\mathcal{B}^{0}.
\]
\end{prop}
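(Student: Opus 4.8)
The plan is to follow the proof of Proposition~\ref{prop:fiber-by-walks} step by step: express $T_{\vec{x}}$ as a product of double cosets acting on $\xi$, substitute the explicit formulas for the powers of the geodesic flows, and collapse the product to a single Cartan double coset. Applying \eqref{eq:Hecke-op} repeatedly, with stabilizers $G_{\sigma^{j}}=P_{j}$, gives
\[
T_{\vec{x}}(\xi)=KP_{1}(P_{1}w_{1}P_{1})^{x_{1}}P_{1}P_{2}(P_{2}w_{2}P_{2})^{2x_{2}}P_{2}P_{3}\cdots P_{d-2}P_{d-1}(P_{d-1}w_{d-1}P_{d-1})^{(d-1)x_{d-1}}P_{d-1}\xi ,
\]
a product of subsets of $G=PGL_{d}(F)$. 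The power formula proved in \cite[\S5.1]{Lubetzky2017RandomWalks} reads $(P_{j}w_{j}P_{j})^{jx_{j}}=U_{j}^{(x_{j})}\delta_{j}P_{j}$, where $\delta_{j}=\diag(\varpi^{x_{j}}I_{j},I_{d-j})$ and $U_{j}^{(x_{j})}$ is the block-unipotent set of matrices $\left(\begin{smallmatrix}I_{j}&B\\0&I_{d-j}\end{smallmatrix}\right)$ with $B$ ranging over lifts of $M_{j\times(d-j)}(\mathcal{O}/\varpi^{x_{j}}\mathcal{O})$; for $d=3$ these are precisely the two identities invoked in Proposition~\ref{prop:fiber-by-walks}. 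One checks from the identification in \S\ref{subsec:Cartan-decomposition} that $\Phi^{-1}(\vec{x})=Ka\xi$ with $a=\diag(\varpi^{\alpha_{1}},\dots,\varpi^{\alpha_{d-1}},1)$, $\alpha_{i}=x_{i}+\cdots+x_{d-1}$, and that $a=\delta_{1}\delta_{2}\cdots\delta_{d-1}$.

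I would then prove by induction on $1\le j\le d-1$ that the initial segment of the above product satisfies
\[
KP_{1}(P_{1}w_{1}P_{1})^{x_{1}}P_{1}P_{2}\cdots P_{j-1}P_{j}(P_{j}w_{j}P_{j})^{jx_{j}}\;=\;K\,\delta_{1}\delta_{2}\cdots\delta_{j}\,P_{j}.
\]
The base case $j=1$ is $KP_{1}U_{1}^{(x_{1})}\delta_{1}P_{1}=K\delta_{1}P_{1}$, since $P_{1},U_{1}^{(x_{1})}\subseteq K$. For the step $j\to j+1$ one right-multiplies by $P_{j}P_{j+1}(P_{j+1}w_{j+1}P_{j+1})^{(j+1)x_{j+1}}$, uses $P_{j}P_{j}=P_{j}$, and invokes four facts about the parahorics: (a) $P_{j+1}\subseteq P_{j}$ (as $\sigma^{j}$ is a face of $\sigma^{j+1}$); (b) $P_{j}=L_{j}P_{j+1}$, where $L_{j}=\{\diag(\mu_{1},\dots,\mu_{j},A):\mu_{i}\in\mathcal{O}^{\times},\,A\in GL_{d-j}(\mathcal{O})\}$ is the standard Levi of $P_{j}$ --- this comes from the Levi decomposition of the reduction $\overline{P_{j}}\subseteq PGL_{d}(\mathbb{F}_{q})$, a standard parabolic of type $(1^{j},d-j)$, together with the fact that its unipotent radical and the principal congruence subgroup both lie in the Iwahori subgroup $P_{d-1}\subseteq P_{j+1}$; (c) the accumulated diagonal $\delta_{1}\cdots\delta_{j}=\diag(\varpi^{\ast},\dots,\varpi^{\ast},I_{d-j})$ is central in $L_{j}$, hence commutes with it; (d) conjugation by the non-increasing, non-negative diagonal $\delta_{1}\cdots\delta_{j}$ carries the block-unipotent $U_{j+1}^{(x_{j+1})}$ into matrices over $\mathcal{O}$, hence back into $K$. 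Combining (b), (c) and the power formula one gets $K\delta_{1}\cdots\delta_{j}P_{j}P_{j+1}(\cdots)\subseteq K\delta_{1}\cdots\delta_{j}L_{j}(\cdots)=KL_{j}\delta_{1}\cdots\delta_{j}(\cdots)=K\delta_{1}\cdots\delta_{j}(\cdots)$, and the reverse inclusion from $P_{j+1}\subseteq P_{j}P_{j+1}$ --- the same sandwiching used for $d=3$ --- so the manipulation is an equality; absorbing the unipotent factor $U_{j+1}^{(x_{j+1})}$ via (d) then yields $K\delta_{1}\cdots\delta_{j+1}P_{j+1}$. Taking $j=d-1$, appending the final factor $P_{d-1}$ coming from $T_{d-1,0}$, and using $P_{d-1}\xi=\xi$ gives $T_{\vec{x}}(\xi)=K\delta_{1}\cdots\delta_{d-1}\xi=Ka\xi=\Phi^{-1}(\vec{x})$.

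The hard part is the inductive collapse, and within it the structural inputs (b) and (c). In the $d=3$ case $j$ reaches only $2$, the single Levi in play is $K_{1,2}$, and the two leftover unipotent coordinates are cleared by an explicit left multiplication; for general $d$ one must instead work along the whole chain $P_{1}\supseteq P_{2}\supseteq\cdots\supseteq P_{d-1}$, identify each reduction $\overline{P_{j}}$ exactly as the standard parabolic of type $(1^{j},d-j)$, and verify that every partial product $\delta_{1}\cdots\delta_{j}$ lies in the centre of the corresponding Levi $L_{j}$ --- it is here that the precise shape of $w_{j}$ and the exponent $jx_{j}$ in $T_{j}^{jx_{j}}$ are used, as they force $\delta_{1}\cdots\delta_{j}$ to be scalar on each of the $j$ one-dimensional blocks of $L_{j}$. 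A minor but pervasive annoyance is keeping every manipulation an equality rather than a one-sided inclusion, handled uniformly by the sandwich $P_{j+1}\subseteq P_{j}P_{j+1}\subseteq L_{j}P_{j+1}$ as in Proposition~\ref{prop:fiber-by-walks}.
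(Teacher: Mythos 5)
Your proof is correct, and it follows essentially the same route as the paper's: write $T_{\vec{x}}(\xi)$ as a product of double cosets via \eqref{eq:Hecke-op}, substitute the power formula $(P_jw_jP_j)^{jx_j}=U_j^{(x_j)}\delta_j P_j$ from \cite[\S 5.1]{Lubetzky2017RandomWalks}, and collapse left-to-right by induction, absorbing at each step the $P_jP_{j+1}$ factor and the unipotent factor into $K$ by commuting them past the accumulated diagonal.

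The one genuine (though small) point of divergence is how $P_jP_{j+1}$ is peeled off. The paper builds an explicit finite transversal $S_t\subseteq K$ with $P_tP_{t+1}=S_tP_{t+1}$, given by $\bigsqcup_{j}\diag(I_t,Q_j,I_{d-t-j})$, and then uses that each member of $S_t$ commutes with $g_t$. You instead invoke the Levi factorization $P_j=L_jP_{j+1}$ of the parahoric, with $L_j=\{\diag(\mu_1,\dots,\mu_j,A)\}$, and use centrality of $\delta_1\cdots\delta_j$ in $L_j$. Both devices are valid and serve the identical purpose (they give a set $S\subseteq K$ with $P_jP_{j+1}=SP_{j+1}$ and $S$ commuting with $g_j$); yours is more structural, the paper's more concrete. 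The minor bookkeeping differences (you absorb $U_{j+1}^{(x_{j+1})}$ after the $P_jP_{j+1}$ step rather than before, and you state the induction as an identity of subsets of $G$ for the initial segment rather than for $T_{\vec{x}}(\xi)$) change nothing substantive.

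One nit: the Levi factorization $P_j=L_jP_{j+1}$ deserves a line of justification along the lines you sketch — reduce mod $\varpi$, use $\overline{P_j}=M_j\overline{P_{j+1}}$ in $PGL_d(\mathbb{F}_q)$, and lift using $K(\varpi)\subseteq P_{j+1}$. As stated it is correct.
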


\begin{proof}
Denoting $g_{t}=\diag\left(\varpi^{x_{1}+\ldots+x_{t}},\varpi^{x_{2}+\ldots+x_{t}},\ldots,\varpi^{x_{t}},1,\ldots,1\right)$,
we claim that
\begin{equation}
T_{\vec{x}}\left(\xi\right)=Kg_{t-1}\left[\prod\nolimits _{j=t}^{d-1}P_{j}(w_{j}P_{j})^{jx_{j}}\right]\xi\qquad\text{for }1\leq t\leq d.\label{eq:Tx_t}
\end{equation}
For $t=1$, the definitions of $T_{\vec{x}}$ and the operators $T_{i},T_{i,j}$
indeed give
\[
T_{\vec{x}}\left(\xi\right)=\left[\prod\nolimits _{j=1}^{d-1}P_{j-1}P_{j}(P_{j}w_{j}P_{j})^{jx_{j}}\right]P_{d-1}P_{0}\xi=Kg_{0}\left[\prod\nolimits _{j=1}^{d-1}P_{j}(w_{j}P_{j})^{jx_{j}}\right]\xi.
\]
Assume that \eqref{eq:Tx_t} holds for some $1\leq t\leq d-1$. Explicit
computation as in \cite[§5.1]{Lubetzky2017RandomWalks} gives
\[
P_{t}\left(w_{t}P_{t}\right)^{tx_{t}}=\left(\begin{array}{c|c}
\varpi^{x_{t}}I_{t} & M_{t\times d-t}\left(\mathcal{O}\right)\\
\hline 0 & ~I_{d-t}
\end{array}\right)P_{t},
\]
and using $K$ to perform row elimination we obtain
\begin{align*}
T_{\vec{x}}\left(\xi\right) & =Kg_{t-1}\left(\begin{array}{c|c}
\varpi^{x_{t}}I_{t} & M_{t\times d-t}\left(\mathcal{O}\right)\\
\hline 0 & ~I_{d-t}
\end{array}\right)P_{t}\left[\prod\nolimits _{j=t+1}^{d-1}P_{j}(w_{j}P_{j})^{jx_{j}}\right]\xi\\
 & =Kg_{t}P_{t}P_{t+1}\left[\prod\nolimits _{j=t+1}^{d-1}P_{j}(w_{j}P_{j})^{jx_{j}}\right]\xi.
\end{align*}
Next, observe that $P_{t}P_{t+1}$ decomposes as $S_{t}P_{t+1}$ when
$S_{t}\subseteq K$ is any set which takes $\sigma^{t}$ to all $\left(t+1\right)$-cells
containing it. There are $(q^{d-t}-1)/(q-1)$ such cells, as in the
spherical building $\sigma^{t}$ corresponds to a $t$-dimensional
subspace of $\mathbb{F}_{q}^{d}$, and these cells to the minimal
subspaces containing it. This also shows how to compute such a transversal
$S_{t}$, and 
\[
S_{t}=\bigsqcup_{j=1}^{d-t}\diag\left(I_{t},Q_{j},I_{d-t-j}\right),\qquad\text{where }Q_{j}=\left(\begin{smallmatrix}\mathbb{F}_{q} & 1\\
\vdots &  & \ddots\\
\mathbb{F}_{q} &  &  & 1\\
1 & 0 & \cdots & 0
\end{smallmatrix}\right)\subseteq GL_{j}\left(\mathcal{O}\right),
\]
is one option. Since the matrices in $S_{t}$ above commute with $g_{t}$
(and lie in $K$), this shows that
\[
Kg_{t}P_{t}P_{t+1}\left[\prod\nolimits _{j=t+1}^{d-1}P_{j}(w_{j}P_{j})^{jx_{j}}\right]\xi=Kg_{t}\left[\prod\nolimits _{j=t+1}^{d-1}P_{j}(w_{j}P_{j})^{jx_{j}}\right]\xi,
\]
establishing \eqref{eq:Tx_t} for $t+1$. Taking $t=d$ in \eqref{eq:Tx_t}
we obtain 
\[
T_{\vec{x}}\left(\xi\right)=Kg_{d-1}\xi=\Phi^{-1}(x_{1},\ldots,x_{d-1}).\qedhere
\]
\end{proof}
The decomposition of $\Phi^{-1}\left(\vec{x}\right)$ suggests the
metric to impose on $\mathcal{S}$:
\begin{defn}
The $R$-norm on $\mathbb{N}^{d-1}\cong\mathcal{S}$ is
\[
R(x_{1},...,x_{d-1})=\sum\nolimits _{j=1}^{d-1}j(d-j)x_{i}.
\]
\end{defn}

In addition, we obtain a bound on the size of the fiber above $\vec{x}$:
\begin{cor}
\label{cor:FiberSizeGeneral_d}For $\vec{x}\in\mathbb{N}^{d-1}$,
the size of the fiber $\Phi^{-1}\left(\vec{x}\right)$ is bounded
by
\[
\left|\Phi^{-1}\left(\vec{x}\right)\right|\leq{\textstyle \prod\nolimits _{j=1}^{d-1}\frac{q^{j+1}-1}{q-1}\cdot q^{R\left(\vec{x}\right)}\leq d!\,q^{{d \choose 2}+R\left(\vec{x}\right)}}.
\]
\end{cor}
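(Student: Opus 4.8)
\emph{Proof proposal.} The plan is to read the bound off directly from the fiber decomposition $\Phi^{-1}(\vec x)=T_{\vec x}(\xi)$ of Proposition~\ref{prop:fiber-decomposition}, together with the elementary fact that a composition of branching operators has ``degree'' at most the product of the degrees of its constituents: if $T\colon\Omega_1\to 2^{\Omega_2}$ is $k$-regular (i.e.\ $|T(\sigma)|=k$ for all $\sigma$) and $S\colon\Omega_2\to 2^{\Omega_3}$ is arbitrary, then $|(S\circ T)(\sigma)|\le\sum_{\tau\in T(\sigma)}|S(\tau)|\le k\cdot\max_{\tau}|S(\tau)|$, so by induction on the number of factors $|T_{\vec x}(\xi)|$ is bounded by the product of the regularities of the operators making up $T_{\vec x}$.

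It then remains to collect these regularities. The operator $T_j$ is $q^{d-j}$-regular, so the block $T_j^{jx_j}$ contributes a factor $(q^{d-j})^{jx_j}=q^{j(d-j)x_j}$, and multiplying over $j=1,\dots,d-1$ yields exactly $q^{R(\vec x)}$ --- this is precisely why the $R$-norm is the natural weight here. The incidence operator $T_{j-1,j}$ sends a type-one $(j-1)$-cell to the $j$-cells containing it; as recorded in the proof of Proposition~\ref{prop:fiber-decomposition}, such a cell corresponds to a $(j-1)$-dimensional subspace of $\mathbb{F}_q^d$ and its minimal overcells to the $j$-dimensional subspaces containing it, so $T_{j-1,j}$ is $\tfrac{q^{d-j+1}-1}{q-1}$-regular. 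Finally $T_{d-1,0}$ is $1$-regular. Multiplying everything and reindexing the product over $\{2,\dots,d\}$ via $j\mapsto d-j+1$ gives
\[
\left|\Phi^{-1}(\vec x)\right|\;\le\;\prod_{j=1}^{d-1}\frac{q^{d-j+1}-1}{q-1}\cdot q^{R(\vec x)}\;=\;\prod_{j=1}^{d-1}\frac{q^{j+1}-1}{q-1}\cdot q^{R(\vec x)}.
\]

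For the second, cruder inequality I would simply use $\frac{q^{j+1}-1}{q-1}=1+q+\dots+q^{j}\le (j+1)q^{j}$, so that $\prod_{j=1}^{d-1}\frac{q^{j+1}-1}{q-1}\le\bigl(\prod_{j=1}^{d-1}(j+1)\bigr)\,q^{\sum_{j=1}^{d-1}j}=d!\,q^{{d\choose2}}$, which is the claimed bound. I do not anticipate a real obstacle: the proof is pure bookkeeping, and the two points that need a moment's care are (a) getting the regularity of $T_{j-1,j}$ right --- it counts minimal overcells, i.e.\ is $\tfrac{q^{d-j+1}-1}{q-1}$, not something depending linearly on $j$ --- and (b) observing that the ``product of degrees'' estimate is legitimate \emph{as an upper bound} even though images of distinct cells under a later operator may coincide and shrink the true fiber; pinning down the exact fiber size would require tracking these coincidences, but the corollary asks only for an inequality.
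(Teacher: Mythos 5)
Your proposal is correct and takes essentially the same approach as the paper: it reads the bound off from Proposition~\ref{prop:fiber-decomposition} by multiplying the regularities of the constituent operators, identifying $T_{j-1,j}$ as $\tfrac{q^{d-j+1}-1}{q-1}$-regular, the blocks $T_j^{jx_j}$ as contributing $q^{j(d-j)x_j}$, and $T_{d-1,0}$ as $1$-regular. The paper's proof is just the one-line version of this, so your added justifications (the ``product of degrees'' upper bound for compositions of branching operators, and the estimate $\tfrac{q^{j+1}-1}{q-1}\le(j+1)q^j$ yielding $d!\,q^{\binom{d}{2}}$) are faithful elaborations rather than a different route.
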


\begin{proof}
This follows from Prop.\ \ref{prop:fiber-decomposition}, as $T_{j}$
is $q^{d-j}$-regular, $T_{j,j+1}$ is $(q^{d-j}-1)/(q-1)$-regular
(see proof of Prop.\ \ref{prop:fiber-decomposition}), and $T_{d-1,0}$
is $1$-regular.
\end{proof}
Later, we will be interested in the long-term behavior of the $R$-distance
of the random walk on $\mathcal{S}$ from $\xi$. The change in the
$R$-distance distributes in the same manner whenever the walk is
not at the boundary $\partial\mathcal{S}$. We denote this distribution
by $\mathscr{D}$:
\[
\mathbb{P}\left[\mathscr{D}=j\right]=\sum\nolimits _{\gamma\in\left\{ 0,1\right\} ^{d}\backslash\left\{ \zero,\one\right\} :R\left(\gamma'\right)=j}\frac{q^{Z_{\gamma}}}{\deg\left(\xi\right)},
\]
and define $\mathscr{E}_{d}=\mathbb{E}[\mathscr{D}]$ and $\sigma_{d}=\sqrt{Var[\mathscr{D}]}$
(note that $\mathscr{E}$ from Section \ref{sec:d=00003D3} is $\mathscr{E}_{3}/2$).
The reciprocal of $\mathscr{E}_{d}$ is the constant $C_{d,q}$ which
appears in Theorem \ref{thm:main}: 
\begin{equation}
C_{d,q}:=\frac{1}{\mathscr{E}_{d}}=\left[\sum\nolimits _{\gamma\in\left\{ 0,1\right\} ^{d}\backslash\left\{ \zero,\one\right\} }\smash{\frac{R\left(\gamma'\right)q^{Z_{\gamma}}}{\deg\left(\xi\right)}}\right]^{-1}.\label{eq:Cdq}
\end{equation}

\begin{prop}
\label{prop:Ed-estimate}$\mathscr{E}_{d}=\left\lfloor \tfrac{d}{2}\right\rfloor \left\lceil \tfrac{d}{2}\right\rceil +O(\frac{1}{q})$.
\end{prop}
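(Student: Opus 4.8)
The plan is to compute the leading term of $\mathscr{E}_{d}=\mathbb{E}[\mathscr{D}]$ by identifying which binary vectors $\gamma\in\{0,1\}^{d}\setminus\{\zero,\one\}$ contribute at top order in $q$, and to show that the remaining vectors contribute only $O(1/q)$. Recall that $\mathscr{E}_{d}=\sum_{\gamma}R(\gamma')\,q^{Z_{\gamma}}/\deg(\xi)$, where $Z_{\gamma}=\#\{(i,j)\mid i<j,\ \gamma_{i}=1,\ \gamma_{j}=0\}$ counts ``inversions'' of $\gamma$, and $\deg(\xi)=\sum_{j=1}^{d-1}\left[\begin{smallmatrix}d\\ j\end{smallmatrix}\right]_{q}$. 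The first observation is that $\deg(\xi)$ is a polynomial in $q$ whose degree is $\max_{j}j(d-j)=\lfloor d/2\rfloor\lceil d/2\rceil$, achieved (uniquely among the $j$'s, up to the symmetry $j\leftrightarrow d-j$) at $j=\lfloor d/2\rfloor$, with leading coefficient $1$; so $\deg(\xi)=q^{\lfloor d/2\rfloor\lceil d/2\rceil}(1+O(1/q))$. Similarly $Z_{\gamma}$ is maximised precisely when all the $1$'s of $\gamma$ precede all the $0$'s, i.e.\ $\gamma=(1^{m},0^{d-m})$ for some $1\le m\le d-1$, in which case $Z_{\gamma}=m(d-m)$, again maximised (value $\lfloor d/2\rfloor\lceil d/2\rceil$) at $m=\lfloor d/2\rfloor$.

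Next I would pin down $R(\gamma')$ on these dominant vectors. For $\gamma=(1^{m},0^{d-m})$ one has $\gamma'=(\gamma_{1}-\gamma_{2},\dots,\gamma_{d-1}-\gamma_{d})=e_{m}$, the $m$-th standard basis vector, so $R(\gamma')=m(d-m)$ by the definition of the $R$-norm. Hence the top-degree contribution to the numerator $\sum_{\gamma}R(\gamma')q^{Z_{\gamma}}$ comes only from $\gamma=(1^{m},0^{d-m})$ with $m\in\{\lfloor d/2\rfloor,\lceil d/2\rceil\}$: when $d$ is odd these are two distinct vectors, each contributing $R(\gamma')q^{Z_{\gamma}}=\lfloor d/2\rfloor\lceil d/2\rceil\cdot q^{\lfloor d/2\rfloor\lceil d/2\rceil}$, and $\deg(\xi)$ also has a factor $2$ in its leading behaviour (from $j=\lfloor d/2\rfloor$ and $j=\lceil d/2\rceil$), so the factor of $2$ cancels; when $d$ is even there is one such vector $(1^{d/2},0^{d/2})$ and correspondingly one dominant term $j=d/2$ in $\deg(\xi)$. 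Either way the ratio of leading terms is exactly $\lfloor d/2\rfloor\lceil d/2\rceil$, giving $\mathscr{E}_{d}=\lfloor d/2\rfloor\lceil d/2\rceil+O(1/q)$.

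The remaining, routine part is the error estimate: I must check that every $\gamma$ which is \emph{not} of the form $(1^{m},0^{d-m})$ with $m\in\{\lfloor d/2\rfloor,\lceil d/2\rceil\}$ contributes a term of degree at most $\lfloor d/2\rfloor\lceil d/2\rceil-1$ to the numerator, so that after dividing by $\deg(\xi)=q^{\lfloor d/2\rfloor\lceil d/2\rceil}(1+O(1/q))$ the total error is $O(1/q)$. This splits into two cases. If $\gamma$ has $m$ ones but is not ``sorted'' ($1$'s then $0$'s), then $Z_{\gamma}\le m(d-m)-1\le\lfloor d/2\rfloor\lceil d/2\rceil-1$, and $R(\gamma')$ is bounded by a constant depending only on $d$, so the term has degree $\le\lfloor d/2\rfloor\lceil d/2\rceil-1$. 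If $\gamma=(1^{m},0^{d-m})$ is sorted but $m\notin\{\lfloor d/2\rfloor,\lceil d/2\rceil\}$, then $Z_{\gamma}=m(d-m)\le\lfloor d/2\rfloor\lceil d/2\rceil-1$ (since the function $m\mapsto m(d-m)$ drops by at least $1$ when $m$ moves away from its maximiser among integers). Summing the finitely many such terms and comparing with $\deg(\xi)$ yields the claim. I do not anticipate a genuine obstacle here; the only mild subtlety is bookkeeping the $d$ odd versus $d$ even dichotomy so that the combinatorial factor $2$ appearing in both numerator and denominator is seen to cancel cleanly.
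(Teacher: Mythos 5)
Your proposal is correct and follows essentially the same approach as the paper: identify the dominant binary vectors $\gamma=(1^{m},0^{d-m})$ with $m\in\{\lfloor d/2\rfloor,\lceil d/2\rceil\}$, on which $Z_{\gamma}=R(\gamma')=\lfloor d/2\rfloor\lceil d/2\rceil$, match them against the leading term of $\deg(\xi)$ (which has coefficient $\tfrac{3-(-1)^{d}}{2}$), and verify that all other $\gamma$ contribute at lower order. One internal slip worth noting: the claim that $\deg(\xi)=q^{\lfloor d/2\rfloor\lceil d/2\rceil}\bigl(1+O(1/q)\bigr)$ ``with leading coefficient $1$'' is false for odd $d$ (the leading coefficient is $2$), but you correct this in the next paragraph by observing that the factor of $2$ appears in both numerator and denominator and cancels, so the argument stands.
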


\begin{proof}
Recall that $\left[\begin{smallmatrix}d\\
j
\end{smallmatrix}\right]_{q}=\prod_{i=1}^{j}\frac{q^{d-i+1}-1}{q^{i}-1}$. Writing $f\approx g$ for $f\left(q\right)=g\left(q\right)\left(1+O(\nicefrac{1}{q})\right)$,
this implies $\left[\begin{smallmatrix}d\\
j
\end{smallmatrix}\right]_{q}\approx q^{j\left(d-j\right)}$, and thus
\[
\deg\left(\xi\right)\approx{\textstyle \sum\nolimits _{j=\left\lfloor d/2\right\rfloor }^{\left\lceil d/2\right\rceil }\left[\begin{smallmatrix}d\\
j
\end{smallmatrix}\right]_{q}\approx\frac{3-\left(-1\right)^{d}}{2}q^{\left\lfloor d/2\right\rfloor \left\lceil d/2\right\rceil }}.
\]
Similarly, $Z_{\gamma}$ is largest when $\gamma$ is a sequence of
$\left\lfloor d/2\right\rfloor $ or $\left\lceil d/2\right\rceil $
ones, followed by zeros. In this case we have $Z_{\gamma}=\left\lfloor d/2\right\rfloor \left\lceil d/2\right\rceil $,
and also $R\left(\gamma'\right)=\left\lfloor d/2\right\rfloor \left\lceil d/2\right\rceil $,
hence it follows from \eqref{eq:Cdq} that $\mathscr{E}_{d}\approx\left\lfloor d/2\right\rfloor \left\lceil d/2\right\rceil $.
\end{proof}
We demonstrate the first few cases of $\mathscr{E}_{d}$ and $\deg\left(\xi\right)$
in Table \ref{tab:deg_xi}.

\begin{table}
\begin{centering}
\begin{tabular}{|c|c|c|}
\hline 
$d$ & $\mathscr{E}_{d}\cdot\deg\left(\xi\right)$ & ${\textstyle \deg\left(\xi\right)=\sum\nolimits _{j=1}^{d-1}\left[\begin{smallmatrix}d\\
j
\end{smallmatrix}\right]_{q}}$\tabularnewline
\hline 
\hline 
2 & $q-1$ & $q+1$\tabularnewline
\hline 
3 & $4q^{2}-4$ & $2q^{2}+2q+2$\tabularnewline
\hline 
4 & $4q^{4}+8q^{3}+2q^{2}-4q-10$ & $q^{4}+3q^{3}+4q^{2}+3q+3$\tabularnewline
\hline 
5 & ${12q^{6}+8q^{5}+16q^{4}\atop +4q^{3}-8q^{2}-12q-20}$$\vphantom{\Big|}$ & ${2q^{6}+2q^{5}+6q^{4}\atop +6q^{3}+6q^{2}+4q+4}$\tabularnewline
\hline 
6 & ${9q^{9}+23q^{8}+22q^{7}+25q^{6}+21q^{5}\atop +3q^{4}-15q^{3}-28q^{2}-25q-35}$$\vphantom{\Big|}$ & ${q^{9}+3q^{8}+4q^{7}+7q^{6}+9q^{5}\atop +11q^{4}+9q^{3}+8q^{2}+5q+5}$\tabularnewline
\hline 
7 & ${24q^{12}+20q^{11}+52q^{10}+52q^{9}+56q^{8}+32q^{7}\atop +24q^{6}-8q^{5}-40q^{4}-52q^{3}-60q^{2}-44q-56}$$\vphantom{\Big|}$ & ${2q^{12}+2q^{11}+6q^{10}+8q^{9}+12q^{8}+12q^{7}\atop +18q^{6}+16q^{5}+16q^{4}+12q^{3}+10q^{2}+6q+6}$\tabularnewline
\hline 
\end{tabular}\\
\vspace*{0.5cm}
\par\end{centering}
\caption{\label{tab:deg_xi}The polynomials which arise in the computation
of $\deg\left(\xi\right),\mathscr{E}_{d},C_{d,q}$.}
\end{table}

\subsection{Cutoff on $\mathcal{B}(\mathrm{PGL}_{d})$}

Fix $\varepsilon>0$. For $r\ge0$ we define $B^{R}\left(\xi,r\right)$,
the $R$-normalized $r$-ball around $\xi$, to be the set of vertices
$\zeta\in\BT^{0}$ satisfying $R(\Phi(\zeta)))\leq r$. From Corollary
\ref{cor:FiberSizeGeneral_d} we obtain the bound
\begin{equation}
|B^{R}(\xi,r)|\leq\left|\left\{ \vec{x}\,|\,R\left(\vec{x}\right)\leq r\right\} \right|d!q^{\binom{d}{2}+r}\leq d!q^{\binom{d}{2}}\cdot r^{d-1}q^{r}.\label{eq:Ball_d_case}
\end{equation}
Defining
\[
r_{0}=\log_{q}n-d\log_{q}\log_{q}n,
\]
we obtain that for $n$ large enough
\begin{equation}
|B^{R}(\xi,r_{0})|\leq d!q^{\binom{d}{2}}\frac{\log_{q}^{d-1}(n)\cdot n}{\log_{q}^{d}(n)}<\varepsilon n.\label{eq:R-ball-bound}
\end{equation}

For a finite quotient $X$ of $\mathcal{B}$ and $v\in X^{0}$, we
choose a covering map $\varphi:\mathcal{B}\rightarrow X$ with $\varphi\left(\xi\right)=v$
as before, and define $B^{R}\left(v,r\right)=\varphi\left(B^{R}\left(\xi,r\right)\right)$
(this is independent of the choice of $\varphi$ as $R$ is $K$-invariant).
As in Section \ref{sec:d=00003D3}, $\left(X_{t}\right)=\varphi\left(\mathcal{X}_{t}\right)$
is a SRW on $X$ starting from $v$. We define 
\begin{align*}
\rho(t) & =R(\Phi(\mathcal{X}_{t}))=\sum_{j=1}^{d-1}j\left(d-j\right)x_{j}\left(t\right),\\
\end{align*}
and recall that $\rho\left(t\right)-\rho\left(t-1\right)\sim\mathscr{D}$
when $\Phi\left(\mathcal{X}_{t-1}\right)\notin\partial\mathbb{N}^{d-1}$.
By the same arguments as in $\mathrm{PGL}_{3}$ (with Prop.\ \ref{Prop:Transience2}
replacing Prop.\ \ref{prop:PGL3_transient}), we see that $\left(\rho(t)-\mathscr{E}_{d}t\right)/(\sigma_{d}\sqrt{t})\Rightarrow\mathcal{N}(0,1)$.
\begin{prop}
\label{prop:mix-t0-1}There exists $s=s\left(q,\varepsilon\right)$
such that $t_{mix}(1-3\eps)>t_{0}$ for large enough $X$, where
\[
t_{0}=\frac{1}{\mathscr{E}_{d}}\log_{q}n-(s+1)\sqrt{\log_{q}n}.
\]
\end{prop}

\begin{proof}
Using a similar computation to the one in Prop.\ \ref{prop:t0-to-r0}
we obtain $\mathbb{P}\left[\rho(t_{0})>r_{0}\right]<\mathbb{P}[Z>cs]+\varepsilon$
for $Z\sim\mathcal{N}\left(0,1\right)$ and $c=c(q,d)=\mathscr{E}_{d}^{3/2}/\sigma_{d}$.
Combining this with \eqref{eq:R-ball-bound}, the proof continues
as that of Prop.\ \ref{prop:mix-t0}, with $B^{R}(v,r_{0})$ replacing
$B(v,r_{0})$. 
\end{proof}
We turn to the upper bound, starting again with the trivial spectrum.
For $X=\Gamma\backslash\mathcal{B}$, we have $\left\{ \mathrm{ord}_{\varpi}\det\gamma\,\middle|\,\gamma\in\Gamma\right\} =m\mathbb{Z}$
for a unique $m\mid d$, and we say that $X$ is $m$-partite. We
obtain a map $\col:X^{0}\rightarrow\mathbb{Z}/m\mathbb{Z}$, which
we again consider as a simplicial map from $X$ to $\triangle_{m-1}$,
the $\left(m-1\right)$-dimensional simplex. We have $L_{\col}^{2}\left(X^{0}\right)=\col^{-1}\left(L^{2}\left(\triangle_{m-1}^{0}\right)\right)$,
and $L_{0}^{2}\left(X^{0}\right)$,$\mathcal{P}_{\col}$, $\mathcal{P}_{0}$
are defined as before. The walk induced from $X$ on $\triangle_{m-1}$
is not simple, but every edge is taken with positive probability.
Furthermore, unless $d=m=2$, the walk is aperiodic, since even if
$m=2$ there are loops at the vertices of $\triangle_{1}$ when $d\geq3$.
The case $d=m=2$ is that of bipartite Ramanujan graphs, on which
SRW does not mix, and for the rest of the paper we exclude this case.
We conclude as before that there exists $t_{\triangle}=t_{\triangle}\left(\varepsilon\right)$
with $\left\Vert \mathcal{P}_{\!\col}(\mu_{X}^{t})-\pi_{X}\right\Vert _{TV}\leq\varepsilon$
for any $t\geq t_{\triangle}$, hence
\begin{equation}
\begin{alignedat}{1}\left\Vert \mu_{X}^{t}-\pi_{X}\right\Vert _{TV} & \leq\left\Vert \mathcal{P}_{0}(\mu_{X}^{t})\right\Vert _{TV}+\left\Vert \mathcal{P}_{\!\col}(\mu_{X}^{t})-\pi_{X}\right\Vert _{TV}\\
 & \leq\left\Vert \mathcal{P}_{0}(\mu_{X}^{t})\right\Vert _{TV}+\varepsilon.
\end{alignedat}
\label{eq:P_0+epsilon}
\end{equation}
We now choose 
\begin{align*}
r_{1} & =\log_{q}n+4\,d!\log_{q}\log_{q}n,\\
t_{1} & =\frac{1}{\mathscr{E}_{d}}\log_{q}n+(s+1)\sqrt{\log_{q}n},
\end{align*}
and the same $c$ and $s$ as in Prop.\ \ref{prop:mix-t0-1} give
for $n$ large enough
\[
\mathbb{P}\left[\rho(t_{1})<r_{1}\right]\leq\mathbb{P}[Z>cs]+\varepsilon<2\varepsilon.
\]
Denoting $\mu_{X}^{t,\vec{x}}=\mathbb{P}[X_{t}\!=\cdot\,\big|\,\vec{x}(t)\!=\!\vec{x}]$
and 
\[
S=\left\{ \vec{x}\in\mathbb{N}^{d-1}\,\middle|\,{\textstyle \sum_{i=1}^{d-1}x_{i}\leq t_{1}}\text{ and }R(\vec{x})\geq r_{1}\right\} ,
\]
we obtain
\begin{align}
\left\Vert \mathcal{P}_{0}(\mu_{X}^{t_{1}})\right\Vert _{TV} & =\Big\Vert\mathbb{P}\left[\rho(t_{1})<r_{1}\right]\mathcal{P}_{0}\left(\mathbb{P}[X_{t}=\cdot\,\middle|\,\rho(t_{1})<r_{1}]\right)\nonumber \\
 & \qquad\qquad+\sum\nolimits _{\vec{x}\,:\,r_{1}\leq R(\vec{x})}\mathbb{P}\big[\vec{x}(t_{1})=\vec{x}\big]\mathcal{P}_{0}(\mu_{X}^{t_{1},\vec{x}})\Big\Vert_{TV}\label{eq: Total_Prob_d_case}\\
 & \leq2\varepsilon+\max_{\vec{x}\in S}\big\Vert\mathcal{P}_{0}(\mu_{X}^{t_{1},\vec{x}})\big\Vert_{TV}.\nonumber 
\end{align}

\begin{prop}
\label{prop:P0_xy_bound-1}If $\sum_{i=1}^{d-1}x_{i}\leq t_{1}$ and
$R(\vec{x})\geq r_{1}$, then for $n$ large enough $\big\Vert\mathcal{P}_{0}(\mu_{X}^{t_{1},\vec{x}})\big\Vert_{TV}\leq\varepsilon$.
\end{prop}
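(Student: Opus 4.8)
The plan is to follow the $PGL_3$ argument (the proof of Proposition \ref{prop:P0_xy_bound}) almost verbatim, using the concatenated operator $T_{\vec x}$ of Proposition \ref{prop:fiber-decomposition} in place of $T_{(x,y)}$. First I would note that SRW on $\BT$ started at the $K$-fixed vertex $\xi$ has $K$-invariant distribution, and that by the Cartan decomposition each fiber $\Phi^{-1}(\vec x)$ is a single finite $K$-orbit; hence, whenever $\sum_i x_i\le t$ (the only role of $t$ — it makes $\{\vec x(t)=\vec x\}$ a positive-probability event), the conditional distribution $\mu_{\BT,\xi}^{t,\vec x}$ is the uniform measure $\pi_{\vec x}$ on $\Phi^{-1}(\vec x)$, so $\mu_X^{t,\vec x}=\pi_{\vec x}\circ\varphi^{-1}$. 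As in the two-dimensional case, $\widetilde{T_{\vec x}}(\one_\xi)$ is supported on $\Phi^{-1}(\vec x)$ by Proposition \ref{prop:fiber-decomposition} and is $K$-invariant, hence equals $\pi_{\vec x}$, so that
\[
\mu_X^{t,\vec x}=\widetilde{T_{\vec x}}\big|_X(\one_v)=\widetilde{T_{d-1,0}}\big|_X\prod\nolimits_{j=d-1}^{1}\widetilde{T_j^{\,jx_j}}\big|_X\,\widetilde{T_{j-1,j}}\big|_X(\one_v).
\]

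Next, since the incidence relations in $X$ are regular, each $\widetilde{T_j}|_X$ and $\widetilde{T_{j-1,j}}|_X$ preserves the splitting $L^2=L_{\col}^2\oplus L_0^2$ of the relevant cell-space, so $\mathcal P_0(\mu_X^{t,\vec x})=\widetilde{T_{\vec x}}|_X(\mathcal P_0\one_v)$, and as $\|\mathcal P_0\one_v\|_2\le1$ it suffices to bound $\|\widetilde{T_{\vec x}}|_{L_0^2(X^0)}\|_2$. Here the flow $T_j$ is a $q^{d-j}$-regular collision-free geometric operator on $\mathcal B_1^j$ (by \cite[\S5.1]{Lubetzky2017RandomWalks}), so by Theorems \ref{thm:Col-Free-Ram} and \ref{thm:digraph-bound} the digraph $\mathcal D_{T_j}(X)$ is $q^{d-j}$-regular, $r_j$-normal with $r_j=d!/(d-j)!$, and Ramanujan, whence (after normalizing the adjacency operator)
\[
\big\|\widetilde{T_j^{\,jx_j}}\big|_{L_0^2(X_1^j)}\big\|_2\le\binom{jx_j+r_j-1}{r_j-1}\,q^{(d-j)(r_j-1-jx_j)/2}.
\]
The remaining operators $\widetilde{T_{j-1,j}}|_X$ and $\widetilde{T_{d-1,0}}|_X$ have $L^2$-operator norms equal to explicit positive constants depending only on $q$ and $d$, computed exactly as $\|\widetilde{T_{01}}\|,\|\widetilde{T_{12}}\|,\|\widetilde{T_{20}}\|$ in the $PGL_3$ case, by counting incidences and evaluating on constant functions.

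Multiplying these bounds, the $q$-exponents contributed by the flows combine to $-\tfrac12\sum_j j(d-j)x_j=-\tfrac12 R(\vec x)$ by the very definition of the $R$-norm, so
\[
\big\|\widetilde{T_{\vec x}}\big|_{L_0^2(X^0)}\big\|_2\le C_{d,q}\cdot\Big(\textstyle\prod_{j=1}^{d-1}\binom{jx_j+r_j-1}{r_j-1}\Big)\,q^{-R(\vec x)/2}
\]
for a constant $C_{d,q}>0$. Using $\|\cdot\|_{TV}\le\tfrac{\sqrt n}{2}\|\cdot\|_2$, the inequality $x_i\le R(\vec x)$ (since every coefficient $j(d-j)\ge1$), and the fact that $\rho\mapsto(\text{poly of degree }\sum_j(r_j-1))\cdot q^{-\rho/2}$ is eventually decreasing, we get for $n$ large (so that $r_1$ is past this threshold)
\[
\big\|\mathcal P_0(\mu_X^{t,\vec x})\big\|_{TV}\le\tfrac{\sqrt n}{2}\,C_{d,q}\cdot\widetilde P(R(\vec x))\,q^{-R(\vec x)/2}\le\tfrac{\sqrt n}{2}\,C_{d,q}\,\widetilde P(r_1)\,q^{-r_1/2},
\]
where $\widetilde P$ is a univariate polynomial majorant of the binomial product of degree $\sum_j(r_j-1)<2d!\le d^d<d^d+1$. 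Since $r_1=\log_q n+2(d^d+1)\log_q\log_q n$, the factor $\sqrt n\,q^{-r_1/2}$ equals $(\log_q n)^{-(d^d+1)}$, which overwhelms $\widetilde P(r_1)=O((\log_q n)^{\sum_j(r_j-1)})$; hence the right-hand side is $o(1)$, in particular $<\varepsilon$ for $n$ large.

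The only genuinely new point relative to $PGL_3$ is that $G$ is no longer transitive on $\mathcal B^j$; this is circumvented by working throughout with the type-one cells $\mathcal B_1^j$, on which it is, and by using the correspondingly larger normality constants $r_j=d!/(d-j)!$. The substantive input — that the uniform measure on each fiber $\Phi^{-1}(\vec x)$ is realized by the concatenation $T_{\vec x}$ of collision-free geodesic flows through cells of all dimensions, with the $R$-norm exactly tracking the resulting $q$-contraction — is already supplied by Proposition \ref{prop:fiber-decomposition}; given it, the present proposition is a mechanical transcription of the two-dimensional argument, and the main care needed is the bookkeeping of the $r_j$'s and of the fixed $L^2$-norms of the dimension-changing operators.
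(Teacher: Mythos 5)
Your proof is correct and follows essentially the same route as the paper's: identify $\mu_{X}^{t,\vec{x}}$ with $\widetilde{T_{\vec{x}}}\big|_{X}\left(\one_{v}\right)$ via Proposition \ref{prop:fiber-decomposition} and $K$-invariance, decompose along $L^{2}=L_{\col}^{2}\oplus L_{0}^{2}$, bound each flow power using Theorems \ref{thm:Col-Free-Ram} and \ref{thm:digraph-bound} so that the $q$-exponents sum to $-R(\vec{x})/2$, and let the choice of $r_{1}$ absorb the $\sqrt{n}$ factor. The only cosmetic difference is that the paper observes the dimension-changing operators' norms $\sqrt{d_{i}/d_{o}}$ telescope to exactly $1$, whereas you keep them as a harmless constant depending on $d,q$.
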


\begin{proof}
Denote by $\pi_{\vec{x}}$ the uniform distribution on $\Phi^{-1}\left(\vec{x}\right)$.
Using the same argument as in Prop.\ \ref{prop:P0_xy_bound}, with
Prop.\ \ref{prop:fiber-decomposition} replacing Prop.\ \ref{prop:fiber-by-walks},
we obtain (for any $t$)
\[
\mu_{X}^{t,\vec{x}}=\pi_{\vec{x}}\circ\varphi^{-1}=\widetilde{T_{\vec{x}}}\left(\one_{\xi}\right)\circ\varphi^{-1}=\widetilde{T_{\vec{x}}}\big|_{X}\left(\one_{v}\right)=\widetilde{T_{d-1,0}}\prod\nolimits _{j=d-1}^{1}\widetilde{T_{j}}^{jx_{j}}\widetilde{T_{j-1,j}}\big|_{X}\left(\one_{v}\right).
\]
Again the operators $\widetilde{T_{i}}|_{X}$ and $\widetilde{T_{ij}}|_{X}$
decompose with respect to $L^{2}=L_{\col}^{2}\oplus L_{0}^{2}$, so
that $\mathcal{P}_{0}(\mu_{X}^{t,\vec{x}})=\widetilde{T_{\vec{x}}}\big|_{X}\left(\mathcal{P}_{0}\left(\one_{v}\right)\right)$.
By \cite[§5.1]{Lubetzky2017RandomWalks}, the $j$-flow operator $T_{j}$
is $q^{d-j}$-regular and collision-free, and using Theorem \ref{thm:Col-Free-Ram}
and \eqref{eq:Ram-dig-power} we obtain
\begin{align*}
\left\Vert \widetilde{T_{j}^{jx_{j}}}|_{L_{0}^{2}\left(X_{1}^{j}\right)}\right\Vert _{2} & \leq\frac{1}{q^{\left(d-j\right)jx_{j}}}\cdot\left(jx_{j}+\left(d\right)_{j}\right)^{\left(d\right)_{j}}\left(q^{d-j}\right)^{\frac{jx_{j}+\left(d\right)_{j}}{2}}\\
 & =\frac{\left(q^{(d-j)/2}\left(jx_{j}+(d)_{j}\right)\right)^{\left(d\right)_{j}}}{q^{\left(d-j\right)jx_{j}/2}}\overset{{\scriptscriptstyle (*)}}{\leq}\frac{\left(2q^{d}dt_{1}\right)^{\left(d\right)_{j}}}{q^{\left(d-j\right)jx_{j}/2}},
\end{align*}
where $\left(*\right)$ assumes $n$ is large enough. If $T$ is a
branching operator of out-degree $d_{o}$ and in-degree $d_{i}$,
then $\Vert\widetilde{T}\Vert_{2}=\sqrt{d_{i}/d_{o}}$, so that $\left\Vert \smash{\widetilde{T_{{\scriptscriptstyle d-1,0}}}}\right\Vert _{2}\prod\nolimits _{j=d-1}^{1}\left\Vert \smash{\widetilde{T_{{\scriptscriptstyle j-1,j}}}}\right\Vert _{2}=1$.
Using $\sum_{j=1}^{d-1}\left(d\right)_{j}<2\cdot d!$, we obtain that
for $n$ large enough 
\[
\left\Vert \widetilde{T_{\vec{x}}}|_{L_{0}^{2}\left(X^{0}\right)}\right\Vert _{2}\leq\prod_{j=1}^{d-1}\left\Vert \widetilde{T_{j}^{jx_{j}}}|_{L_{0}^{2}\left(X_{1}^{j}\right)}\right\Vert _{2}\leq{\textstyle \frac{1}{q^{R\left(\vec{x}\right)/2}}}\prod_{j=1}^{d-1}(2q^{d}dt_{1})^{\left(d\right)_{j}}\leq{\textstyle \frac{\left(2q^{d}dt_{1}\right)^{2d!-1}}{q^{R\left(\vec{x}\right)/2}}}.
\]
Taking $n$ large enough that $t_{1}\leq\frac{2}{\mathscr{E}_{d}}\log_{q}n$,
we obtain from $R\left(\vec{x}\right)\geq r_{1}$ that for $n$ large
enough 
\[
\big\Vert\mathcal{P}_{0}(\mu_{X}^{t,\vec{x}})\big\Vert_{TV}=\big\Vert\widetilde{T_{\vec{x}}}\big|_{X}\left(\mathcal{P}_{0}\left(\one_{v}\right)\right)\big\Vert_{TV}\leq{\textstyle \frac{\sqrt{n}}{2}}\left\Vert \widetilde{T_{\vec{x}}}\big|_{L_{0}^{2}\left(X^{0}\right)}\right\Vert _{2}\leq{\textstyle \frac{\left(4q^{d}d\mathscr{E}_{d}^{-1}\log_{q}n\right)^{2d!-1}}{2(\log_{q}n)^{2d!}}<\varepsilon.}\qedhere
\]
\end{proof}
We conclude with the proof of the main theorem:
\begin{proof}[Proof of Theorem \ref{thm:main}]
From (\ref{eq:P_0+epsilon}), \eqref{eq: Total_Prob_d_case}, and
Prop.\ \ref{prop:P0_xy_bound-1} we conclude that $t_{mix}(4\eps)\leq t_{1}$
for $n=|X^{0}|$ large enough. Together with Prop.\ \ref{prop:mix-t0-1},
this implies the cutoff phenomenon at time $\frac{1}{\mathscr{E}_{d}}\log_{q}n$,
with a window of size $O(\sqrt{\log n})$, and $C_{d,q}=\frac{1}{\mathscr{E}_{d}}=\tfrac{1}{\left\lfloor d/2\right\rfloor \left\lceil d/2\right\rceil }+O(\tfrac{1}{q})$
by Prop.\ \ref{prop:Ed-estimate}.
\end{proof}
\bibliographystyle{amsalpha}
\bibliography{mybib}

\noindent \begin{flushleft}
\noun{\small{}Einstein Institute of Mathematics, Hebrew University,
Israel.}\texttt{}~\\
\texttt{\small{}michael.chapman@mail.huji.ac.il, parzan@math.huji.ac.il.}{\small\par}
\par\end{flushleft}
\end{document}